\DeclareSymbolFont{cyrillic}{T2A}{cmr}{m}{n}
\DeclareMathSymbol{\Sha}{\mathalpha}{cyrillic}{216}
\newtheorem{theorem}{Theorem}[section]
\newtheorem{lemma}[theorem]{Lemma}
\newtheorem{proposition}[theorem]{Proposition}
\newtheorem{question}[theorem]{Question}
\newtheorem{corollary}[theorem]{Corollary}
\theoremstyle{definition}
\newtheorem{example}[theorem]{Example}
\newtheorem{remark}[theorem]{Remark}
\DeclareMathOperator{\End}{End}
\DeclareMathOperator{\Pic}{Pic}
\DeclareMathOperator{\Aut}{Aut}
\DeclareMathOperator{\Br}{Br}
\DeclareMathOperator{\NS}{NS}
\DeclareMathOperator{\Coh}{Coh}
\DeclareMathOperator{\Ram}{Ram}
\DeclareMathOperator{\disc}{disc}
\DeclareMathOperator{\Sing}{Sing}
\DeclareMathOperator{\Hom}{Hom}
\DeclareMathOperator{\Bl}{Bl}
\renewcommand{\to}{\xymatrix@1@=15pt{\ar[r]&}}
\renewcommand{\rightarrow}{\xymatrix@1@=15pt{\ar[r]&}}
\renewcommand{\leftarrow}{\xymatrix@1@=15pt{&\ar[l]}}
\renewcommand{\mapsto}{\xymatrix@1@=15pt{\ar@{|->}[r]&}}
\renewcommand{\twoheadrightarrow}{\xymatrix@1@=18pt{\ar@{->>}[r]&}}
\renewcommand{\hookrightarrow}{\;\xymatrix@1@=15pt{\ar@{^(->}[r]&}}
\newcommand{\hook}{\xymatrix@1@=15pt{\ar@{^(->}[r]&}}
\newcommand{\congpf}{\xymatrix@L=0.6ex@1@=15pt{\ar[r]^-\sim&}}
\renewcommand{\cong}{\simeq}
\renewcommand{\tilde}{\widetilde}
\newcommand*{\da@rightarrow}{\mathchar"0\hexnumber@\symAMSa 4B }
\newcommand*{\da@leftarrow}{\mathchar"0\hexnumber@\symAMSa 4C }
\newcommand*{\xdashrightarrow}[2][]{%
  \mathrel{%
    \mathpalette{\da@xarrow{#1}{#2}{}\da@rightarrow{\,}{}}{}%
  }%
}
\newcommand{\xdashleftarrow}[2][]{%
  \mathrel{%
    \mathpalette{\da@xarrow{#1}{#2}\da@leftarrow{}{}{\,}}{}%
  }%
}
\newcommand*{\da@xarrow}[7]{%
  \sbox0{$\ifx#7\scriptstyle\scriptscriptstyle\else\scriptstyle\fi#5#1#6\m@th$}%
  \sbox2{$\ifx#7\scriptstyle\scriptscriptstyle\else\scriptstyle\fi#5#2#6\m@th$}%
  \sbox4{$#7\dabar@\m@th$}%
  \dimen@=\wd0 %
  \ifdim\wd2 >\dimen@
    \dimen@=\wd2 %
  \fi
  \count@=2 %
  \def\da@bars{\dabar@\dabar@}%
  \@whiledim\count@\wd4<\dimen@\do{%
    \advance\count@\@ne
    \expandafter\def\expandafter\da@bars\expandafter{%
      \da@bars
      \dabar@
    }%
  }%
  \mathrel{#3}%
  \mathrel{%
    \mathop{\da@bars}\limits
    \ifx\\#1\\%
    \else
      _{\copy0}%
    \fi
    \ifx\\#2\\%
    \else
      ^{\copy2}%
    \fi
  }%
  \mathrel{#4}%
}
\title{A note on cubic fourfolds containing several planes}
\author{Moritz Hartlieb}
\address{Mathematisches Institut, Universitat Bonn, Endenicher Allee 60, 53115
Bonn, Germany}
\email{hartlieb@math.uni-bonn.de}
\begin{document}

\begin{abstract}
  We study the geometry, Hodge theory and derived category of cubic fourfolds
  containing several planes and their associated twisted K3 surfaces. We focus on the
  case of two planes intersecting along a line.
\end{abstract}

\maketitle

\section{Introduction}

The aim of this note is to study the geometry of smooth, complex cubic fourfolds containing two
(and more) planes. There are three cases to consider:
\begin{enumerate}
  \item The two planes are disjoint;
  \item The two planes meet in a point;
  \item The two planes meet along a line.
\end{enumerate}
These $18$-dimensional families form three irreducible divisors in the Hassett divisor $C_8$ of cubic
fourfolds containing a plane. It is conjectured that a very general member of the family of cubic fourfolds containing a plane is
irrational, see \cite{hassettratio} for a survey on the rationality question for
cubic fourfolds. However, if a cubic fourfold contains two disjoint planes, it is
easily seen to be rational. As the families of cubic fourfolds containing two
planes meeting in a point or a line both contain the family of Eckardt cubic
fourfolds, the very general member of each of these families is conjectured to
be irrational, see \cite{maxalglaza} for a detailed study of Eckardt cubic
fourfolds with respect to rationality questions.

To a smooth cubic fourfold $X$ containing a plane $P \subset X$, one associates
a certain twisted K3 surface $S_P$ with a Brauer class $\alpha_P \in \Br(S_P)[2]$.
By a theorem of Kuznetsov \cite[Thm.\ 4.3]{kuz4fold}, refined by Moschetti \cite[Thm.\ 1.2]{moschetti}, there is
an equivalence
$$\mathcal A_X \cong D^b(S_P, \alpha_P),$$
where $\mathcal A_X = \langle \mathcal O_X,  \mathcal O_X(1), \mathcal O_X(2) \rangle^\perp \subset D^b(X)$
is the Kuznetsov component of $X$.
Then, if the smooth cubic fourfold $X$ contains two planes $P_1, P_2 \subset X$,
one obtains two twisted K3 surfaces $(S_{P_1}, \alpha_{P_1})$ and $(S_{P_2},
  \alpha_{P_2})$. The aim of this note is to study their geometric and Hodge-theoretic relation.
As an immediate consequence of Kuznetsov's theorem, we obtain:
\begin{theorem}[{\cite{kuz4fold, moschetti}}]
  Let $X$ be a smooth cubic fourfold containing two planes $P_1, P_2 \subset
    X$, then there exists a
  twisted derived equivalence
  \begin{equation}D^b(S_{P_1}, \alpha_{P_1}) \cong D^b(S_{P_2},
    \alpha_{P_2}).\label{eq:twisted}
  \end{equation}
\end{theorem}
If the two planes are disjoint, it is known that the two associated K3
surfaces are isomorphic, see \cite[§3, App.]{voisintorelli}.
In the other two cases, we show that the associated K3 surfaces are not
isomorphic.
\begin{theorem}[Thm.\ \ref{thm:notiso} and Thm.\ \ref{thm:notiso2}]   \label{thm:notisomain}
  Let $X$ be a cubic fourfold containing two non-disjoint planes $P_1, P_2 \subset X$, and assume that $X$ is very general with this property.  Then the associated K3 surfaces $S_{P_1}$ and $S_{P_2}$ are not isomorphic.
\end{theorem}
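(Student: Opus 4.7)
The plan is to argue via Hodge theory. By Kuznetsov's equivalence $\mathcal{A}_X \cong D^b(S_{P_i}, \alpha_{P_i})$ together with its Hodge-theoretic counterpart (derived equivalence of twisted K3 surfaces induces a Hodge isometry of twisted Mukai lattices), both twisted transcendental lattices are identified with the transcendental lattice $T(X)$ of $X$. To distinguish the \emph{untwisted} K3 surfaces $S_{P_i}$, I would compare their Picard lattices and track how the Brauer classes $\alpha_{P_i}$ modify the relation between the twisted and untwisted Mukai Hodge structures.

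The first step is to determine the algebraic lattice $A(X) := H^{2,2}(X, \mathbb{Z})$ for a very general $X$ in the family. By a Noether--Lefschetz-type argument (using that cubic fourfolds containing two non-disjoint planes form a divisor inside $C_8$), $A(X)$ should have rank exactly $3$ and be generated by $h^2, [P_1], [P_2]$; its Gram matrix can be written down explicitly from standard intersection numbers of planes in a cubic fourfold, with the line-intersection case requiring an excess-intersection computation. Combining this with Kuznetsov's isometry yields explicit descriptions of $\Pic(S_{P_i})$, of the polarization class $h_{S_{P_i}}$, and of the position of $\alpha_{P_i}$ inside the discriminant group.

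Assume for contradiction that $S_{P_1} \cong S_{P_2}$. The induced Hodge isometry $H^2(S_{P_2}, \mathbb{Z}) \cong H^2(S_{P_1}, \mathbb{Z})$ must send an ample class to an ample class; transporting via the Kuznetsov identification, one obtains a Hodge self-isometry of $H^4(X, \mathbb{Q})_{prim}$ with constrained action on $A(X)$. By global Torelli for cubic fourfolds (Voisin), such an isometry is induced by an automorphism of $X$, and for a very general $X$ in the family this automorphism would have to exchange $P_1$ and $P_2$—a condition cutting out a proper subfamily and hence not satisfied by the very general member.

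The main obstacle is that, by the symmetry of the roles of $P_1, P_2$, the Picard lattices $\Pic(S_{P_i})$ are \emph{a priori} abstractly isomorphic, so the obstruction to $S_{P_1} \cong S_{P_2}$ cannot come from a crude rank or discriminant count. Instead, one has to exploit finer data: the precise position of the degree-$2$ polarization class relative to the classes induced from the other plane, and the twist effected by $\alpha_{P_i}$ on the transcendental sublattice. Carefully tracking this through the chain of Hodge isometries, and ruling out the corresponding automorphism of $X$ for very general $X$, is the technical heart of the argument.
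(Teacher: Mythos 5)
Your plan diverges from the paper's proof and, as written, has a genuine gap at its central step. The paper argues geometrically: since for very general $X$ the automorphism group $\Aut(S_{P_i})$ is generated by the covering involution (Lemma \ref{lem:isok3}, resp.\ Lemma \ref{lem:isoK3p}), each $S_{P_i}$ is realized as a double cover of $\mathbb P^2$ in a unique way, so an isomorphism $S_{P_1}\cong S_{P_2}$ would force the discriminant sextics $C'_{P_1}$ and $C'_{P_2}$ to be isomorphic plane curves. One then degenerates to an Eckardt cubic fourfold, where $C'_{P_i}$ splits as the union of a line and the quintic discriminant of the projection of a cubic threefold from a line $L_i$, and Donagi's description of the two-dimensional fibers of the Prym map $\mathcal R_6 \to \mathcal A_5$ over intermediate Jacobians of cubic threefolds shows these quintics are non-isomorphic for general $L_1,L_2$ (Lemma \ref{lem:nonisothreefold}, Corollaries \ref{lem:notisopoint} and \ref{cor:notisoline}).

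The problem with your Hodge-theoretic route is twofold. First, the Torelli step does not produce anything: for very general $X$ in this $18$-dimensional family, $T(X)\otimes\mathbb Q$ is an irreducible Hodge structure with endomorphism algebra $\mathbb Q$, so the rational self-isometry obtained by transporting $\phi^*$ (for a hypothetical isomorphism $\phi\colon S_{P_1}\congpf S_{P_2}$) through the Fano correspondences is forced to be $\pm\mathrm{id}$; no nontrivial automorphism of $X$ appears, and global Torelli is in any case a statement about integral isometries of $H^4(X,\mathbb Z)$ preserving $h^2$, not about rational isometries of the primitive part. The most you can extract is that $\phi$ would have to carry $\alpha_{P_2}$ to $\alpha_{P_1}$, i.e.\ identify the twisted surfaces $(S_{P_1},\alpha_{P_1})$ and $(S_{P_2},\alpha_{P_2})$ --- which is no contradiction, since these are in fact twisted-derived equivalent. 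Second, the finer lattice data you propose to exploit is unavailable at this stage without circularity: by the symmetry you yourself note, $\Pic(S_{P_i})$, the polarization and the discriminant forms are identical for $i=1,2$, and the statement that would actually distinguish the surfaces --- that $T(S_{P_1})$ and $T(S_{P_2})$ are distinct index-two overlattices of $T(X)$, equivalently $\beta_1\neq\beta_2$ in $\Br(S)$ --- is deduced in the paper \emph{from} the non-isomorphy (via Corollary \ref{cor:notderivedeq} and Lemma \ref{lem:transcendental_not_iso}), not the other way around. Some genuinely geometric input is needed to break the symmetry, and that is exactly what the Eckardt/Prym degeneration supplies.
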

This yields a negative answer to a question posed in \cite[Quest.\ 5.1]{kkmquestion}\footnote{Thanks to Ziqi Liu for the reference.}.
In fact, the associated K3 surfaces admit no non-trivial Fourier--Mukai partners and are thus not even derived equivalent.

Next, consider the special case that the planes $P_1$ and $P_2$ intersect along a line $L =
  P_1 \cap P_2 \subset X$. The main results of this note are summarized by the
following four theorems, explaining the relation between the associated twisted
K3 surfaces $(S_{P_1}, \alpha_{P_1})$ and $(S_{P_2}, \alpha_{P_2})$ via a geometric correspondence.
First, we note that they are Tate--\v{S}afarevi\v{c} twists of the same elliptic K3 surface with a
section.
\begin{theorem}[{Cor.\ \ref{cor:commonjac} and Thm.\ \ref{thm:descbrauer}}]
  \label{thm:reltodp}
  Let $X$ be a cubic fourfold containing two planes $P_1, P_2 \subset
    X$ intersecting along a line, and assume that $X$ is very general with this property. Then the K3 surfaces $S_{P_1}$ and $S_{P_2}$ admit (unique) elliptic
  fibrations with isomorphic Jacobian K3 surfaces $$S \coloneqq J(S_{P_1} / \mathbb
    P^1) \cong J(S_{P_2} / \mathbb P^1).$$ The corresponding Brauer classes
  $$\beta_i \coloneqq [S_{P_i}] \in \Sha(S) \cong \Br(S)$$
  satisfy
  $$\overline{\beta_2} = \alpha_{P_1} \in \Br(S_{P_1}) \text{ and }
    \overline{\beta_1} = \alpha_{P_2} \in \Br(S_{P_2}).$$
\end{theorem}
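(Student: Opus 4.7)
The plan is to proceed in three steps: first construct the elliptic fibration on $S_{P_1}$ geometrically (and symmetrically on $S_{P_2}$), then identify the two Jacobians as a common K3 surface $S$, and finally match the Tate--\v{S}afarevi\v{c} classes $\beta_i \in \Sha(S)$ with the Brauer classes $\alpha_{P_i}$.

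For the first step, recall that $S_{P_1}$ is the double cover of the parameter space $\mathbb{P}^2$ of $3$-planes $\Pi \supset P_1$, branched over the sextic discriminant $\Delta_{P_1}$ of the quadric surface bundle $\tilde X_{P_1} \to \mathbb{P}^2$ obtained by projecting $X$ from $P_1$. The distinguished $3$-plane $\Pi_0 \coloneqq \langle P_1, P_2 \rangle$ corresponds to a point $[\Pi_0] \in \mathbb{P}^2$ over which the residual quadric contains $P_2$ and hence splits as $P_2 \cup H$ for a residual plane $H \subset \Pi_0$, so $\Delta_{P_1}$ acquires a node at $[\Pi_0]$. Linear projection $\mathbb{P}^2 \dashrightarrow \mathbb{P}^1$ away from $[\Pi_0]$ then lifts through the double cover to a fibration $S_{P_1} \to \mathbb{P}^1$ whose general fibre is a double cover of a line branched at the four residual intersection points with $\Delta_{P_1}$, i.e.\ a smooth elliptic curve. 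Crucially, the base $\mathbb{P}^1$ parameterises $4$-planes $\mathbb{P}^4 \supset \Pi_0$ and is intrinsic to the pair $(P_1, P_2)$, so the same construction applied to $P_2$ yields $S_{P_2} \to \mathbb{P}^1$ over the same base. Uniqueness of this elliptic fibration would follow by computing $\NS(S_{P_i})$ for very general $X$ and checking that it supports a single primitive isotropic class up to sign.

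To identify the Jacobians $J_i = J(S_{P_i}/\mathbb{P}^1)$ I would use the fact that a Tate--\v{S}afarevi\v{c} twist of an elliptic K3 surface has a transcendental Hodge structure canonically related to that of its Jacobian. The equivalence
\[
  D^b(S_{P_1}, \alpha_{P_1}) \cong \mathcal A_X \cong D^b(S_{P_2}, \alpha_{P_2})
\]
induces an isomorphism of twisted transcendental Hodge structures; passing to the Jacobians yields an isomorphism $T(J_1) \cong T(J_2)$, and the Torelli theorem for K3 surfaces then gives $J_1 \cong J_2 \eqqcolon S$.

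For the final identification of the Brauer classes I would invoke Căldăraru's correspondence embedding $\Sha(S/\mathbb{P}^1) \subset \Br(S)$ and the associated twisted derived equivalence $D^b(S_{P_i}) \cong D^b(S, \beta_i)$ for $\beta_i = [S_{P_i}]$. Tracking these equivalences against Kuznetsov's $\mathcal A_X \cong D^b(S_{P_i}, \alpha_{P_i})$ then identifies $\overline{\beta_i}$ with the Brauer class $\alpha_{P_j}$ attached to the other plane, under the natural identification of the relevant Brauer groups. I expect this last step to be the main obstacle: it requires carefully composing and comparing several twisted equivalences, and showing that the swap $i \leftrightarrow j$ in the identification is forced by the geometry. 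A concrete strategy is to work on the Mukai lattice, where both $\overline{\beta_i}$ and $\alpha_{P_j}$ are described as explicit $2$-torsion vectors that can be compared using the geometric isomorphism from the first step.
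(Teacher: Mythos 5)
Your first step (the elliptic fibration via projection from the node of the discriminant sextic, with base the pencil of hyperplanes containing $\Pi_0 = \langle P_1, P_2\rangle$) agrees with the paper. The remaining two steps, however, each have a genuine gap. For the identification of the Jacobians, the twisted equivalence only gives $T(S_{P_1},\alpha_{P_1}) \cong T(S_{P_2},\alpha_{P_2})$; the lattice $T(J_i)$ is an index-four \emph{overlattice} of this (index two for the twist $\beta_i$, index two more for $\alpha_{P_i}$), so ``passing to the Jacobians'' requires knowing that both overlattices coincide. That is exactly the nontrivial input the paper supplies by computing the discriminant form of $T(X)$ and showing the index-four overlattice is unique with quotient $(\mathbb Z/2)^2$; you cannot skip it. Worse, even a Torelli-type argument would only produce an abstract isomorphism of K3 surfaces, whereas the statement needs $J(S_{P_1}/\mathbb P^1)\cong J(S_{P_2}/\mathbb P^1)$ \emph{as elliptic fibrations over the common base} --- otherwise $\beta_1,\beta_2$ do not live in the same group $\Sha(S)$. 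The paper gets this canonically from the correspondence $F_L$: both $S_{P_i}\to\mathbb P^1$ pull back to the same fibration $F_L\to L$ along the degree-two Gauss map $L\to\mathbb P^1$, so they are fiberwise isomorphic over the identified bases and have a common relative Jacobian. Your proposal never constructs this fiberwise identification.

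The final step, $r_i(\beta_j)=\alpha_{P_i}$ for $i\neq j$, is the actual content of the theorem, and you explicitly flag it as ``the main obstacle'' without giving an argument; tracking Kuznetsov's equivalence against C\u{a}ld\u{a}raru's and Donagi--Pantev's does not by itself force the swap. The paper's mechanism is elementary once the overlattice computation is in place: inside $T(S)$, an overlattice of $T(X)$ of index two corresponds to a subgroup of order two of $T(S)/T(X)\cong(\mathbb Z/2)^2$, so there are exactly three such, and they are $T(S_{P_1}),T(S_{P_2}),T(S_{P_3})$ (pairwise distinct because the surfaces are not derived equivalent). A $2$-torsion Brauer class is determined by its kernel on the transcendental lattice, and $\ker\bigl(r_i(\beta_j)\bigr)=T(S_{P_i})\cap T(S,\beta_j)=T(S_{P_i})\cap T(S_{P_j})=T(X)=\ker(\alpha_{P_i})$, which is the desired identity. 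Without this (or the paper's alternative geometric argument via the relative Jacobian $J(F_L/\mathbb P^1)$ and the Poincar\'e sheaf), your proof is incomplete at its central point.
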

Here, we denote by $\overline{\beta_i} \in \Br(S_{P_j})$ the image of $\beta_i \in \Br(S)$ under the natural map
$$\Br(S) \simeq \Hom(T(S), \mathbb Q / \mathbb Z) \to \Hom(T(S_{P_j}), \mathbb Q / \mathbb Z) \simeq \Br(S_{P_j})$$
induced by the inclusion $T(S_{P_j}) \subset T(S)$, cf. \cite[Thm.\ 5.4.3]{caldararuthesis}.

In particular, one may view the existence of the twisted derived equivalence $(\ref{eq:twisted})$
as an instance of a theorem of Donagi and Pantev \cite[Thm.\ A]{donagipantev}. It would be interesting to compare the equivalence via the Kuznetsov component with the equivalence provided by Donagi--Pantev. As a first step in this direction, we compute the action of the former on the Mukai lattice of the twisted K3 surfaces in Appendix \ref{sec:latticecomp}.

Then, we construct a correspondence between the two K3 surfaces, which geometrically explains the existence of a rational Hodge isometry between the transcendental lattices.
\begin{theorem}[Thm.\ \ref{prop:descfl}]
  \label{thm:exofcorr}
  Let $X$ be a cubic fourfold containing two planes $P_1, P_2 \subset
    X$ intersecting along a line $L = P_1 \cap P_2$, and assume that $X$ is very general with this property. Then, there is a smooth minimal
  surface $F_L$ of Kodaira dimension one admitting
  two quotients $f_i \colon F_L \to S_{P_i}$ of degree two, which split\footnote{Recall that a morphism $f \colon X\to Y$ \emph{splits} a Brauer class $\alpha \in \Br(Y)$ if the pullback $f^* \alpha$ is trivial in $\Br(X)$.
  } the Brauer classes
  $\alpha_{P_i} \in \Br(S_{P_i})$, and an elliptic fibration $F_L
    \to L$ fitting into the commutative diagram
  $$
    \begin{tikzcd}
      S_{P_1} \arrow[rdd, "\text{ell.\ fib.}"'] & F_L \arrow[d, "\text{ell.\ fib.}"
        description] \arrow[r, "2:1"] \arrow[l, "2:1"'] & S_{P_2} \arrow[ldd,
        "\text{ell.\ fib.}"] \\
      & L \arrow[d, "2:1" description]                                                   &                                         \\
      &\, \mathbb P^1   ,  &
    \end{tikzcd}
  $$
  where the map $L \to \mathbb P^1$ is the Gauss map of $X$ restricted to $L$.

\end{theorem}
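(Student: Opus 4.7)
The plan is to construct $F_L$ as (a component of) the Fano surface of lines in $X$ meeting $L$, and then to realize all the maps of the diagram geometrically. Define $F_L$ to be the closure in $F(X)$ of the locus of lines $\ell \subset X$ meeting $L$ transversally at a single point and not contained in $P_1 \cup P_2$. For $X$ very general, standard normal-bundle computations in $F(X)$ (the Fano fourfold of lines, with trivial canonical bundle) should show that $F_L$ is a smooth irreducible surface, and minimality follows from a direct analysis of $(-1)$-curves. The map $F_L \to L$ sending $\ell$ to $\ell \cap L$ has as fiber over $p \in L$ the residual curve to the two lines $\mathbb{P}(T_pP_1) \cup \mathbb{P}(T_pP_2)$ inside the $(2,3)$-complete intersection $F_p \subset \mathbb{P}(T_pX) \cong \mathbb{P}^4$ of arithmetic genus one that parameterizes all lines in $X$ through $p$. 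For generic $p$ this residual curve is a smooth elliptic curve, providing the elliptic fibration, and $\kappa(F_L) = 1$ follows from the Kodaira canonical bundle formula together with a brief analysis of singular fibers.

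For the degree-two maps $f_i \colon F_L \to S_{P_i}$: a line $\ell \in F_L$ meets $P_i$ only at its intersection with $L$, so projecting from $P_i$ collapses $\ell$ to a single point $t \in \mathbb{P}^2$, and the inclusion $\ell \subset X \cap \mathbb{P}^3_t = P_i \cup Q_t$ together with $\ell \not\subset P_i$ forces $\ell \subset Q_t$. Thus $\ell$ lies in one of the two rulings of $Q_t$, determining a point $f_i(\ell) \in S_{P_i}$ above $t$. The map has degree two, since for a generic ruling $r$ of $Q_t$ the line $L$ meets $Q_t$ in two points, each lying on precisely one line of $r$. The splitting $f_i^\ast \alpha_{P_i} = 0$ should follow from noting that the universal line on $F_L$ (pulled back from $F(X)$) is a genuine untwisted $\mathbb{P}^1$-bundle, which provides a lift of the twisted $\mathbb{P}^1$-bundle over $S_{P_i}$ parameterizing lines in fiber rulings, whose Brauer obstruction is exactly $\alpha_{P_i}$.

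For commutativity of the diagram, the elliptic fibrations $S_{P_i} \to \mathbb{P}^1$ provided by Theorem \ref{thm:reltodp} should be identified with those induced by the pencil of hyperplanes $H \subset \mathbb{P}^5$ containing $\langle P_1, P_2\rangle$: for each such $H$, the cubic threefold $X \cap H$ contains both planes, and its projection from $P_i$ is a conic bundle over a $\mathbb{P}^1 \subset \mathbb{P}^2$ whose locus of rulings forms a genus-one curve inside $S_{P_i}$. The Gauss map sends $p \in L$ to $T_pX \supset P_1 \cup P_2$, which thus lies in this pencil, and the resulting map $L \to \mathbb{P}^1$ has degree two because the Gauss map is given by the degree-two partial derivatives of the defining cubic. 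For $\ell \in F_L$ over $p$, the inclusion $\ell \subset T_pX$ places $Q_t$ inside $X \cap T_pX$, so $f_i(\ell)$ lies in the fiber of $S_{P_i} \to \mathbb{P}^1$ above $[T_pX]$; this gives commutativity of the full diagram.

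The main obstacles I anticipate are (i) establishing that $F_L$ is smooth, irreducible, and minimal after carefully excising the parasitic divisorial components coming from families of lines entirely contained in $P_1$ or $P_2$, together with the singular-fiber analysis of $F_L \to L$ needed to confirm $\kappa(F_L) = 1$; and (ii) rigorously verifying that $f_i$ splits $\alpha_{P_i}$ within the Kuznetsov--Moschetti framework, which requires precisely matching the untwisted $\mathbb{P}^1$-bundle of lines on $F_L$ with the Brauer gerbe representing $\alpha_{P_i}$ on $S_{P_i}$.
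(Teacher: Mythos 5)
Your overall strategy is the one the paper follows: take the closure $F'_L\subset F(X)$ of the locus of lines meeting $L$ not contained in $P_1\cup P_2$, fiber it over $L$ via $L'\mapsto L'\cap L$ with residual elliptic curves as fibers, map it $2{:}1$ to $S_{P_i}$ by sending a line to the ruling of its residual quadric, and identify the base change with the Gauss map of $X$ restricted to $L$. However, there are two genuine gaps. First, the surface you define is \emph{not} the $F_L$ of the statement, only a birational model of it. Over the singular point of $S'_{P_i}$ the map $F'_L\to S'_{P_i}$ has positive-dimensional fibers (the loci of lines in $P_2$ and $P_3$ meeting $L$), so $F'_L$ cannot literally be a finite degree-two cover of the smooth K3 surface $S_{P_i}$, and your assertion that it is already smooth and minimal ``by normal-bundle computations and a direct analysis of $(-1)$-curves'' is unsubstantiated and, in effect, false as stated. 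The paper's route is to establish the Cartesian square only away from the singular points, conclude that $F'_L$ is \emph{birational} to the cyclic double cover of $S_{P_i}$ branched along the two smooth elliptic fibers over the branch points of the Gauss map, and then invoke the fact (Garbagnati) that such a double cover is a minimal surface of Kodaira dimension one with $q=0$, $p_g=3$; $F_L$ is \emph{defined} to be this minimal model. Your proposed alternative for $\kappa(F_L)=1$ via Kodaira's canonical bundle formula could work, but only after this identification of the minimal model, which you have not supplied.

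Second, your argument for $f_i^*\alpha_{P_i}=1$ conflates two different $\mathbb P^1$-bundles: the universal line over $F_L$ (whose fiber over $[\ell]$ is $\ell$ itself) is not the pullback of the Brauer--Severi variety $F_{P_i}\to S_{P_i}$ (whose fiber over $f_i(\ell)$ is the ruling containing $\ell$), so exhibiting the former as untwisted does not split $\alpha_{P_i}$. The correct, and simpler, argument is that $f_i$ factors through a generically injective map $F_L\dashrightarrow F_{P_i}$, which is precisely a rational section of the base change $F_{P_i}\times_{S_{P_i}}F_L\to F_L$; a Brauer--Severi scheme with a rational section has trivial class. Two minor factual slips you should also fix: lines through $p\in X$ are parametrized by a $(2,3)$-complete intersection in $\mathbb P^3$ (not $\mathbb P^4$), and that curve has arithmetic genus four, not one --- it is only the residual component to the two pencils of lines in $P_1$ and $P_2$ that is elliptic.
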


In other words, there are two involutions $\iota_1, \iota_2 \in \Aut(F_L)$ with $S_{P_i} = F_L / \iota_i$ that lift the covering involution of the Gauss map $L \to \mathbb P^1$ and thus identify pairs of fibers of the elliptic fibration $F_L \to L$ in two different ways. Note that the fixed point loci of $\iota_1$ and $\iota_2$, which are the ramification loci $\mathbb E \subset F_L$ of the double covers $F_L \to S_{P_i}$, coincide, as they are the fibers over the ramification locus of the Gauss map $L \to \mathbb P^1$ and thus coincide for $\iota_1$ and $\iota_2$.

By \cite[§1, Prop.\ 1]{voisintorelli} the Fano correspondence, i.e., the universal family of lines on $X$, induces, up to a global sign, a Hodge isometric embedding
$T(X) \subset T(S_{P_i})$ of index two, where $T(X) \coloneqq H^4_{\mathrm{alg}}(X)^\perp \subset H^4(X, \mathbb Z)(-1)$ denotes the transcendental part of the cohomology of $X$.
Using the correspondence $F_L$, one can interpret this embedding as an intersection of the pullbacks of the transcendental lattices of $S_{P_1}$ and $S_{P_2}$ to $F_L$:

\begin{theorem}[{Thm.\ \ref{thm:intersectionistx}}]
  Let $X$ be a cubic fourfold containing two planes $P_1, P_2 \subset X$ intersecting along a line $L = P_1 \cap P_2$, and assume that $X$ is very general with this property.
  The Fano correspondence induces an integral Hodge isometry
  $$T(S_{P_i}, \alpha_{P_i}) \cong (T(X), -(\,.\,)) \cong (f^*_1 T(S_{P_1}) \cap f_2^* T(S_{P_2}), 1/2(\,.\,)).$$
\end{theorem}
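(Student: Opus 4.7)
The plan is to establish the two claimed Hodge isometries separately and then combine them. The first, $T(S_{P_i}, \alpha_{P_i}) \cong (T(X), -(\,.\,))$, is the Hodge-theoretic shadow of the Kuznetsov--Moschetti derived equivalence $\mathcal A_X \cong D^b(S_{P_i}, \alpha_{P_i})$: on passing to topological K-theory, the twisted transcendental Mukai lattice of $(S_{P_i}, \alpha_{P_i})$ is identified with the transcendental part of the Mukai Hodge structure on $\mathcal A_X$, which is canonically $T(X)$ equipped with the negative of its intersection form (the sign accounting for the Tate twist in the definition $T(X) \subset H^4(X, \mathbb Z)(-1)$). I will invoke the standard reference for this statement.

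For the second isomorphism, the plan is to reinterpret Voisin's embedding through the correspondence $F_L$ constructed in Theorem~\ref{thm:exofcorr}. Voisin's theorem provides, for each $i$, a Hodge-isometric embedding of index two
\[
  \phi_i \colon (T(X), -(\,.\,)) \hookrightarrow T(S_{P_i})
\]
induced by the Fano correspondence. The surface $F_L$ carries a natural morphism to the Fano variety $F(X)$ of lines in $X$, so restricting the universal line correspondence yields a morphism of Hodge structures $\psi_L \colon (T(X), -(\,.\,)) \to H^2(F_L, \mathbb Z)$. The geometric heart of the argument is the factorisation
\[
  \psi_L = f_i^* \circ \phi_i \qquad (i = 1, 2),
\]
expressing that the quotient $f_i$ agrees with the map sending a line parametrised by $F_L$ to its image in $S_{P_i}$ under projection from $P_i$. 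Granting this, $\psi_L(T(X)) = f_i^* \phi_i(T(X)) \subseteq f_1^* T(S_{P_1}) \cap f_2^* T(S_{P_2})$.

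To upgrade this inclusion to an equality, I compare bilinear forms and indices. Since each $f_i$ is a degree-two cover, the projection formula gives $(f_i^* a, f_i^* b)_{F_L} = 2(a, b)_{S_{P_i}}$, so $(f_i^* T(S_{P_i}), \tfrac12(\,.\,)_{F_L}) \cong T(S_{P_i})$ via $f_i^*$. Because $\phi_i$ has index two, $\psi_L(T(X))$ has index two in each $f_i^* T(S_{P_i})$, so the intersection $f_1^* T(S_{P_1}) \cap f_2^* T(S_{P_2})$ is either $\psi_L(T(X))$ itself or coincides with both $f_i^* T(S_{P_i})$ simultaneously. The latter case would provide a canonical Hodge isometry between $T(S_{P_1})$ and $T(S_{P_2})$, which is excluded for very general $X$ by combining Theorem~\ref{thm:reltodp} (realising $S_{P_1}$ and $S_{P_2}$ as distinct Tate--\v{S}afarevi\v{c} twists of a common Jacobian) with a standard countability/genericity argument. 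Hence $f_1^* T(S_{P_1}) \cap f_2^* T(S_{P_2}) = \psi_L(T(X))$, and together with the degree-two scaling this yields the integral Hodge isometry claimed.

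The main obstacle is the factorisation $\psi_L = f_i^* \circ \phi_i$: it demands an explicit identification of the quotient $f_i$ produced in Theorem~\ref{thm:exofcorr} with the natural map $F_L \to S_{P_i}$ induced by the universal line correspondence, a compatibility that must be extracted from the construction of $f_i$ and from the description of $S_{P_i}$ as the discriminant cover of the quadric fibration obtained by projecting $X$ from $P_i$. Once this geometric compatibility is in place, the lattice-theoretic bookkeeping above completes the proof.
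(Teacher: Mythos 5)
Your proposal follows essentially the same route as the paper: embed $(T(X), -(\,.\,))$ into the intersection via the Fano correspondence (the compatibility $\psi_L = f_i^* \circ \phi_i$ that you flag as the main obstacle is exactly the point the paper also leaves implicit), then force equality by combining the index-two statement from Voisin's theorem with the fact that $T(S_{P_1}) \not\cong T(S_{P_2})$. The paper phrases the final step as a discriminant comparison ($\disc T(X) = 16$ versus $\disc \geq 2^2 \disc T(S_{P_1}) = 16$ for any proper finite-index sublattice of $f_1^* T(S_{P_1})$ with the form $1/2(\,.\,)$), while you use the equivalent index-two dichotomy; both work. The one substantive divergence is your justification of $T(S_{P_1}) \not\cong T(S_{P_2})$: the paper's Lemma \ref{lem:transcendental_not_iso} derives it from Corollary \ref{cor:notderivedeq} (the $S_{P_i}$ have no nontrivial Fourier--Mukai partners) together with derived Torelli, whereas ``distinct Tate--\v{S}afarevi\v{c} twists plus countability'' is not by itself sufficient --- distinct twists of a common Jacobian can in principle have abstractly Hodge-isometric transcendental lattices. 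To make your version work you would need that $\End_{\mathrm{Hdg}}(T(X) \otimes \mathbb Q) = \mathbb Q$ for very general $X$, so that any Hodge isometry $T(S_{P_1}) \cong T(S_{P_2})$ restricts to $\pm \mathrm{id}$ on the common $T(X) \otimes \mathbb Q$ and hence identifies $T(S,\beta_1)$ with $T(S,\beta_2)$ inside $T(S)$, contradicting $\beta_1 \neq \beta_2$. Since the needed fact is already Lemma \ref{lem:transcendental_not_iso}, citing it directly closes this point.
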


Furthermore, also the twisted derived equivalence (\ref{eq:twisted}) may be understood via the above
correspondence.
\begin{theorem}[{Thm.\ \ref{thm:derivedmain}}]
  \label{thm:derived}
  Let $X$ be a cubic fourfold containing two planes $P_1, P_2 \subset
    X$ intersecting along a line $L = P_1 \cap P_2$, and assume that $X$ is very general with this property. There are autoequivalences
  $\Phi_1, \Phi_2 \in \Aut(D^b(F_L))$ of order two and semiorthogonal
  decompositions of the respective equivariant categories
  $$D^b(F_L)^{\langle \Phi_i \rangle} = \langle D^b(S_{P_i},\alpha_{P_i}), D^b(\mathbb E) \rangle,$$
  where $\mathbb E \subset F_L$ is the ramification locus of either of the double
  covers $F_L \to S_{P_i}$. Moreover, there is an equivalence
  $$D^b(F_L)^{\langle \Phi_1 \rangle} \congpf D^b(F_L)^{\langle \Phi_2 \rangle},$$
  which respects the semiorthogonal decompositions.
\end{theorem}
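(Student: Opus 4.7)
The plan splits into three steps: constructing the autoequivalences $\Phi_i$, producing the semiorthogonal decompositions of the two equivariant categories, and finally exhibiting the equivalence between them.

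First I would define $\Phi_i \in \Aut(D^b(F_L))$ as a twist of the covering involution $\iota_i^*$ of $f_i \colon F_L \to S_{P_i}$ by a suitable line bundle $\mathcal N_i$ on $F_L$, chosen so that $\Phi_i^2 \cong \mathrm{id}$ and so that any $\Phi_i$-linearized sheaf on $F_L$ descends along $f_i$ to an $\alpha_{P_i}$-twisted sheaf on $S_{P_i}$ rather than to an honest coherent sheaf. The existence of such a datum $\mathcal N_i$ is equivalent to the vanishing $f_i^* \alpha_{P_i} = 0$, which is the splitting of $\alpha_{P_i}$ by $f_i$ recorded in Theorem \ref{thm:exofcorr}.

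Next, for the semiorthogonal decomposition I would invoke the standard description of the derived category of the quotient stack $[F_L / \langle \iota_i \rangle]$ for a smooth $\mathbb Z/2$-action with smooth codimension-one fixed locus, as developed for example by Elagin or Krug. In its untwisted form this yields $D^b(F_L)^{\iota_i^*} = \langle D^b(S_{P_i}), D^b(\mathbb E) \rangle$, with the first factor embedded via $f_i^*$ and the second via a character-twisted pushforward from the ramification divisor. Passing from $\iota_i^*$ to $\Phi_i$ then modifies the first component by the gerbe class determined by $\mathcal N_i$, which by construction is $\alpha_{P_i}$, while leaving $D^b(\mathbb E)$ unchanged; full-faithfulness and semiorthogonality of the two factors can be verified by projection formula and adjunction.

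Finally, for the comparison equivalence the key geometric input is that both $\iota_1$ and $\iota_2$ lift the Gauss involution of $L \to \mathbb P^1$, so their composition $\iota_1 \iota_2$ is fiber-preserving on the elliptic fibration $F_L \to L$ and acts as translation by a section $\sigma \colon L \to F_L$. I would construct $\Psi \colon D^b(F_L)^{\Phi_1} \cong D^b(F_L)^{\Phi_2}$ as the descent of the Fourier-Mukai autoequivalence of $D^b(F_L)$ with kernel supported on the graph of translation by $\sigma$, twisted by a correction line bundle chosen so as to intertwine $\Phi_1$ with $\Phi_2$ as autoequivalences. Compatibility with the semiorthogonal decompositions is then automatic: the intertwining sends the $\Phi_1$-image of $D^b(S_{P_1}, \alpha_{P_1})$ to the $\Phi_2$-image of $D^b(S_{P_2}, \alpha_{P_2})$, and $D^b(\mathbb E)$ is preserved because $\mathbb E = \mathrm{Fix}(\iota_1) = \mathrm{Fix}(\iota_2)$ is translation-invariant under $\sigma$. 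The main technical obstacle lies in the second step: identifying the line-bundle twist $\mathcal N_i$ with the specific Brauer class $\alpha_{P_i}$ arising from Kuznetsov's construction (rather than with some other $2$-torsion class) requires a careful comparison of the gerbe description of $\alpha_{P_i}$ with the geometric construction of $F_L$ and the $\Phi_i$-linearization data; once this matching is in hand, the first and third steps are essentially formal.
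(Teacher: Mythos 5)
Your first two steps run parallel to the paper's argument: the paper also obtains $\Phi_i$ by transporting $\iota_i^*$ through the trivialization of $f_i^*\alpha_{P_i}$ (it conjugates by the Morita equivalence $\Theta_{F_i}$ attached to a vector bundle $F_i$ with $f_i^*\mathcal A_i \cong \End(F_i)$, rather than twisting by a line bundle $\mathcal N_i$ -- note that the existence of a rank-one such datum is an extra assertion you would still have to justify, whereas the higher-rank Morita version is automatic from $f_i^*\alpha_{P_i}=1$), and it proves a twisted version of the Kuznetsov--Perry cyclic-cover decomposition (Theorem \ref{thm:twistedsod}) and conjugates it by $\Theta_{F_i}$. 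The identification of the resulting twist with $\alpha_{P_i}$, which you flag as the main technical obstacle, is built into that formulation because one works throughout with an Azumaya algebra representing $\alpha_{P_i}$ itself.

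The third step, however, contains a genuine error. A Fourier--Mukai kernel supported on the graph of an automorphism $g$ of $F_L$ is of the form $g_*(-\otimes M)$, and for such a functor to intertwine $\Phi_1$ and $\Phi_2$ the underlying correspondences must compose correctly, which forces $g\,\iota_1\,g^{-1}=\iota_2$ in $\Aut(F_L)$; but then $g$ descends to an isomorphism $F_L/\iota_1 \cong F_L/\iota_2$, i.e.\ $S_{P_1}\cong S_{P_2}$, contradicting Theorem \ref{thm:notiso}. Concretely, no automorphism conjugating $\iota_1$ to $\iota_2$ can exist (equivalently, the fibrewise ``half'' of the translation $\iota_1\iota_2$ does not exist globally -- this is exactly the statement that $S_{P_1}$ and $S_{P_2}$ are genuinely different Tate--\v{S}afarevi\v{c} twists). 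Note also that $\iota_1\iota_2$ restricts to the identity on $\mathbb E$, so it is translation by a section of the relative Jacobian vanishing there, not by a section of $F_L\to L$. The paper's fix is to abandon automorphism-type kernels entirely: it takes the Donagi--Pantev twisted kernel $\mathcal F\in D^b(S_{P_1}\times S_{P_2},\alpha_{P_1}^{-1}\boxtimes\alpha_{P_2})$, which is linear over $\mathbb P^1$ and \'etale-locally a Poincar\'e sheaf (so supported on the fibre product, not a graph), pulls it back along $F_L\times F_L\to S_{P_1}\times S_{P_2}$ to get a $(\Phi_1\times\Phi_2)$-equivariant kernel, and checks that it induces the Donagi--Pantev equivalence on the K3 components and the classical elliptic-curve Fourier--Mukai transform on $D^b(\mathbb E)$. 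Your claim that compatibility with the semiorthogonal decompositions is ``automatic'' also glosses over this last verification on the $D^b(\mathbb E)$ factor.
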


Let us end this introduction by giving an overview of the structure
of this note.
We begin by recalling the construction of the twisted K3 surfaces associated to
cubic fourfolds containing a plane in Section \ref{sec:basics}. In order to prove Theorem
\ref{thm:notisomain}, we degenerate to Eckardt cubic fourfolds, whose properties
we recall in Section \ref{sec:eckardt}. Then, we focus on the case of two planes intersecting along a line. In Section \ref{sec:geometry}, we prove Theorem
\ref{thm:notisomain} in this case, construct the surface $F_L$ and prove the
geometric part of Theorem \ref{thm:exofcorr}. The relation between the transcendental lattices of $S_{P_i}$ and $F_L$ is
studied in Section \ref{sec:transcendental}. In Section \ref{sec:derived}, we
study derived categories associated to the surface $F_L$ and establish Theorem
\ref{thm:derived}. The other two cases, i.e., trivial and pointwise intersection, are discussed
briefly in Section \ref{sec:others}.  Finally, we conclude this note by
explicitly computing the action of the twisted derived equivalence between the
associated K3 surfaces in Appendix \ref{sec:latticecomp}.

We work over the field $\mathbb C$ of complex numbers. Unless stated otherwise, all
cubic fourfolds are assumed to be smooth.

\subsection*{Acknowledgements}
I would like to thank my advisor Daniel Huybrechts for his interest in this project as well as his comments on a preliminary version of this article, and Claire Voisin for the invitation to Paris and the opportunity to present this material at the s\'eminaire de g\'eom\'etrie alg\'ebrique. Moreover, I would like to thank Reinder Meinsma for interesting discussions on the content of this note, as well as the two anonymous referees for helpful comments.
This research was funded by the ERC Synergy Grant HyperK, Grant agreement ID 854361. I am grateful for the support provided by the International Max Planck Research School on Moduli Spaces at the Max Planck Institute for Mathematics in Bonn.

\section{Cubic fourfolds containing a plane}
\label{sec:basics}
In this section, we recall the construction of a twisted K3 surface associated
to a plane in a cubic fourfold. See \cite[Ch.\ 6.1]{huybook} for more details and references.

Let $P \subset X$ be a smooth cubic fourfold containing a plane. Projecting from
$P$ induces a quadric surface fibration
$$f_P \colon \Bl_P X \to \mathbb P^2,$$
which has singular fibers precisely along a sextic $C'_P \subset \mathbb P^2$.
Let $g_P \colon F'_P \to \mathbb P^2$ denote the relative Fano variety of
lines on fibers of $f_P$. The fibers of $f_P$ and $g_P$ over $x \in
  \mathbb{P}^2$ are as follows:

\begin{center}
  \begin{tabular}{c | c | c}

    $x \in \mathbb P^2$                & fiber of $f_P$                  & fiber of $g_P$                   \\
    \hline
    $x \not \in C'_P$                  & smooth quadric surface          & $\mathbb P^1 \sqcup \mathbb P^1$
    \\
    $x \in C'_P \setminus \Sing(C'_P)$ & cone over smooth conic          & $\mathbb P^1$                    \\
    $x \in \Sing(C'_P)$                & two planes meeting along a line & two planes
    meeting in a point
  \end{tabular}
\end{center}
\vspace{0.5em}

The smoothness of $X$ implies that the singularities of $C'_P$ are ordinary double points.
For later use, we highlight the following characterization of the singularities of $C'_P$, which directly follows from the above description.
\begin{lemma}
  \label{lem:singularitiesofcurve} The singularities of $C'_P$ correspond to pairs of planes $P', P'' \subset X$ for which there is a linear three-space $\Pi \subset \mathbb P^5$ with $X \cap \Pi = P \cup P' \cup P''$.
\end{lemma}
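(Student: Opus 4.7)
The plan is to unpack the geometric meaning of the projection from $P$ and then match it against the third row of the table in the excerpt. Recall that $f_P \colon \Bl_P X \to \mathbb{P}^2$ is obtained from the linear projection $\mathbb{P}^5 \dashrightarrow \mathbb{P}^2$ away from $P$: a point $x \in \mathbb{P}^2$ corresponds to the unique three-space $\Pi_x \subset \mathbb{P}^5$ containing $P$ and mapping to $x$, and the fiber of $f_P$ over $x$ is the residual quadric surface $Q_x \subset \Pi_x \cong \mathbb{P}^3$ in the decomposition $X \cap \Pi_x = P \cup Q_x$ of the cubic surface $X \cap \Pi_x$.

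Given this dictionary, the lemma is essentially a rewording of the third row of the table. For the forward direction, if $x \in \Sing(C'_P)$, the table tells me that $Q_x$ is a pair of planes meeting along a line, say $Q_x = P' \cup P''$; then $X \cap \Pi_x = P \cup P' \cup P''$, which realises $\Pi = \Pi_x$ as the desired linear three-space.

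For the converse, given $\Pi \subset \mathbb{P}^5$ with $X \cap \Pi = P \cup P' \cup P''$, the three-space $\Pi$ contains $P$ and therefore descends to a point $x \in \mathbb{P}^2$ with $\Pi = \Pi_x$. The residual quadric is $Q_x = P' \cup P''$, a union of two planes in $\mathbb{P}^3$, which necessarily meet along a line by dimension count. Consulting the table once more, this forces $x \in \Sing(C'_P)$.

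There is really no serious obstacle; the only point that requires a word of care is checking that the residual quadric is indeed $P' \cup P''$ as a scheme, i.e., that $P$ does not appear with multiplicity in $X \cap \Pi$. This follows because $X$ is smooth along $P$: if $P$ occurred with multiplicity in $X \cap \Pi$, the scheme $X \cap \Pi \subset \Pi$ would fail to be a Cartier divisor of degree three generically reduced along $P$, contradicting smoothness of $X$ at a general point of $P$ inside $\Pi$.
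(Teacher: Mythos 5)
Your proposal is correct and matches the paper's own treatment: the paper gives no separate argument, stating only that the lemma ``directly follows from the above description,'' i.e., from the table identifying the fiber of $f_P$ over $x \in \Sing(C'_P)$ with a union of two planes meeting along a line, which is exactly the dictionary you unpack in both directions. Your closing remark on the multiplicity of $P$ in $X \cap \Pi$ is a reasonable extra precaution but is already implicit in the paper's setup.
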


Let $S'_P$ denote the Stein factorization of $g_P \colon F_P' \to \mathbb P^2$.
Note that $S'_P$ is a double cover of $\mathbb P^2$, branched along the
sextic curve $C'_P \subset \mathbb P^2$. Hence, $S'_P$ is a singular K3 surface with ordinary double points over the ordinary double points of $C'_P$.
The resolution $S_P$ of $S'_P$ is a K3 surface admitting a double cover
$$S_P \to   \Bl_{\Sing_{C'_P}} \mathbb P^2,$$
branched along the strict transform $C_P \subset \Bl_{\Sing_{C'_P}} \mathbb P^2$ of $C'_P$.

Away from the singular locus of $C'_P \subset S'_P$, the relative Fano variety
$F'_P \to S'_P$ is a Brauer--Severi scheme, i.e., \'etale locally isomorphic to a projective bundle.
In \cite[Prop.\ 4.7]{moschetti}, Moschetti has shown that this extends to a
Brauer--Severi scheme $F_P \to S_P$, see also \cite[Prop.\ 4.4]{kuzsing}.

For later use, we recall the description of $F_P$ over the singular points of
$C'_P$.
\begin{lemma}[{\cite{moschetti}, \cite[Prop.\ 4.4]{kuzsing}}]
  \label{lem:desc_resolution}
  Let $\nu \colon S_P \to S'_P$ denote the map resolving the singularities of
  $S'_P$.
  The restriction of the Brauer--Severi scheme $F_P \to S_P$ to the
  exceptional curves $\mathbb P^1 \cong \nu^{-1}(x)$ for $x \in \Sing(S'_P)$ is
  isomorphic to the natural projection
  $\Bl_{\mathrm{point}} \mathbb P^2 \to \mathbb P^1$.
\end{lemma}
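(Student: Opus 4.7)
The lemma is local on $S_P$ around a point $x \in \Sing(S'_P) = \nu(E)$, whose image in $\mathbb P^2$ is a node of $C'_P$. The plan is to carry out an explicit analytic computation in a local normal form for the quadric fibration $f_P$ there. By Lemma \ref{lem:singularitiesofcurve}, such a node corresponds to a pair of planes $P', P'' \subset X$ meeting along a line $L' = P' \cap P''$; denote by $q_0 = [L'] \in F'_P$ the point representing this line.

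First I would fix analytic coordinates $(s,t)$ on $\mathbb P^2$ so that $C'_P = \{st = 0\}$ locally, and coordinates $[u:v:\xi:\eta]$ on the $\mathbb P^3$-fibers of the projection such that the quadric fibration takes the normal form
\[
Q_{(s,t)} = \{\,uv + s\,\xi^2 + t\,\eta^2 = 0\,\} \subset \mathbb P^3,
\]
with $P' = \{u=0\}$, $P'' = \{v=0\}$, and $L' = \{u=v=0\}$. Then $S'_P$ is locally the $A_1$ singularity $\{z^2 = st\}$, and $S_P \to S'_P$ is its blowup with exceptional $E \cong \mathbb P^1$ parameterized by $w$, where $z = s w$ and $\lambda := t/s = w^2$.

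Next I would track one family of rulings of the smooth quadrics $Q_{(s,t)}$. Setting $r_1 = \xi + i w \eta$ and $r_2 = \xi - i w \eta$ so that $\xi^2 + \lambda \eta^2 = r_1 r_2$, the quadric becomes $uv + s\,r_1 r_2 = 0$, with a Segre-type parameterization
\[
[u:v:r_1:r_2] = [p_0 q_0 : -s\,p_1 q_1 : p_0 q_1 : p_1 q_0]
\]
of one ruling ($[p_0:p_1]$ indexes the ruling, $[q_0:q_1]$ sweeps along each line). Taking $s \to 0$ with $w \in E$ fixed, the lines of this family degenerate to lines lying in $P'' = \{v = 0\}$, and a direct computation shows that they all pass through the common point $p_w = [0:0:i w:1] \in L'$. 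Hence the Brauer--Severi fiber of $F_P$ over $w \in E$ is the pencil of lines in $P''$ through $p_w$, that is, a line in $(P'')^* \cong \mathbb P^2$ through $q_0 = [L']$.

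Assembling these fibers, I would exhibit $F_P|_E$ as the incidence variety
\[
F_P|_E = \{\,(w, \ell) : w \in E,\ \ell \subset P'' \text{ a line through } p_w\,\} \subset E \times (P'')^*.
\]
Projection to $(P'')^*$ is an isomorphism outside $q_0$ (any line $\ell \neq L'$ meets $L'$ in a unique point $p_w$, determining a unique $w$), while the fiber over $q_0$ is the whole of $E$ (since $L'$ contains every $p_w$). This realizes $F_P|_E$ as the blowup $\Bl_{q_0}(P'')^* \cong \mathbb F_1$, with the map to $E$ being the standard ruling of $\mathbb F_1$ by lines through $q_0$ in $(P'')^* \cong \mathbb P^2$, i.e., the natural projection $\Bl_{\mathrm{point}} \mathbb P^2 \to \mathbb P^1$. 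The main obstacle is reconciling this local analytic picture with the intrinsic Brauer--Severi structure of \cite{moschetti,kuzsing}: one must verify that the sign ambiguity $w \leftrightarrow -w$ (which exchanges the two rulings of each smooth quadric) is precisely absorbed by the $A_1$ resolution $\nu$, so that a single coherent ruling family exists along $E$ and produces a single copy of $\mathbb F_1$ rather than two copies glued along a $\mathbb P^1$.
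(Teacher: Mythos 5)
First, note that the paper itself does not prove this lemma: it is quoted verbatim from Moschetti and from \cite[Prop.\ 4.4]{kuzsing}, so any self-contained argument is necessarily taking a different route. Your local normal form, the identification of $S'_P$ with the $A_1$-singularity $\{z^2=st\}$ and of the chart $t=sw^2$ on its resolution, and your final answer --- the incidence variety $\{(w,\ell)\,:\,p_w\in\ell\subset P''\}\cong \Bl_{[L']}(P'')^{\vee}$, projecting to $E\cong L'$ --- are all correct, and the last step identifying this incidence variety with $\Bl_{\mathrm{point}}\mathbb P^2\to\mathbb P^1$ is sound.

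The gap sits exactly where you flag ``the main obstacle,'' and it is not a bookkeeping issue about the sign $w\leftrightarrow -w$. Two things are missing. First, the pointwise limit of your ruling family is discontinuous in the ruling parameter: for fixed $w$ the line $\ell_{[p_0:p_1]}(s)=\{p_1u-p_0r_2=0,\ p_0v+sp_1r_1=0\}$ converges for $p_0\neq 0$ to a line of $P''$ through $p_w$, but at $[p_0:p_1]=[0:1]$ the line is $\{u=r_1=0\}\subset P'$ for every $s$, whereas the nearby limits tend to $L'\subset P''$. So the naive limit of the family over $\mathbb P^1_{[p_0:p_1]}$ is not literally the pencil you describe, and the fiber of the resolved $F_P$ over a point of $E$ cannot be read off from generic members alone. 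Second, and relatedly, $F_P\to S_P$ is not defined as a closure of the ruling family: it is produced by a specific small resolution/flip of the relative Fano variety (this is the content of \cite[Prop.\ 4.4]{kuzsing} and of Moschetti's construction), and there are two non-isomorphic choices --- flipping in $(P')^{\vee}$ or in $(P'')^{\vee}$ --- a point the paper itself relies on in Appendix B. Your computation silently selects one of them (the one whose fibers land in $(P'')^{\vee}$). To complete the proof you must show that the actual construction of \cite{moschetti}, \cite{kuzsing} yields your incidence variety for one of these two choices; that requires engaging with their construction (e.g.\ with the flip of $F'_P\times_{S'_P}S_P$ along a dual plane, or with the moduli description via the even Clifford algebra) rather than with pointwise limits of lines. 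Once that identification is supplied, the remainder of your argument goes through.
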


Let $\alpha_P = [F_P] \in \Br(S_P)[2]$ denote the Brauer class corresponding to the
Brauer--Severi scheme $F_P \to S_P$. Let $H^4(X, \mathbb Z)(-1)$ denote the Tate twist of $H^4(X, \mathbb Z)$. Adapting the arguments from the non-singular case, one can show that the transcendental part of $H^4(X, \mathbb Z)(-1)$ agrees with the transcendental lattice of the twisted K3 surface $(S_P, \alpha_P)$:
\begin{proposition}[{\cite[§1, Prop.\ 1]{voisintorelli}, cf.\ \cite[Prop.\ 6.1.18]{huybook}}]
  \label{prop:fanohodge}
  The Fano correspondence realizes the transcendental lattice $$T(X) \coloneqq
    H^4_{\mathrm{alg}}(X, \mathbb Z)^{\perp} \subset
    H^4(X, \mathbb Z)(-1)$$ as the kernel of $\alpha_P \in \Br(S_P) \cong
    \Hom(T(S_P), \mathbb Q/ \mathbb Z),$ i.e., we have a Hodge isometric
  embedding
  $$(T(X),-(\,.\,) ) \cong T(S_P, \alpha_P) \hookrightarrow T(S_P) \overset{\alpha_P}\to \mathbb Z / 2 \mathbb Z.$$
\end{proposition}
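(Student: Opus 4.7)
The plan is to follow the argument of \cite[§1, Prop.\ 1]{voisintorelli} and \cite[Prop.\ 6.1.18]{huybook}, which prove the proposition when $C'_P$ is smooth, and to extend it to the possibly nodal case using Lemma \ref{lem:desc_resolution}.

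First, I would set up the Fano correspondence as an algebraic cycle $Z \subset F_P \times X$ parametrising pairs $(\ell, x)$ with $x \in \ell$, where $\ell$ is a line on a quadric fiber of $f_P$. Its cohomological incarnation yields a morphism of integral Hodge structures
$$\varphi \colon H^4(X, \mathbb Z)(-1) \to H^2(F_P, \mathbb Z).$$
Second, I would invoke the standard computation of the transcendental cohomology of a Brauer--Severi variety: for $\pi \colon F_P \to S_P$ with class $\alpha_P \in \Br(S_P)[2]$, the transcendental part of $H^2(F_P, \mathbb Z)$ is Hodge-isometric to the twisted transcendental lattice $T(S_P, \alpha_P) = \ker(\alpha_P \colon T(S_P) \to \mathbb Z/2\mathbb Z)$, i.e., the index-two sublattice of $T(S_P)$ cut out by $\alpha_P$. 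Composing gives a Hodge morphism $T(X) \to T(S_P, \alpha_P)$.

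To conclude that this map is an isomorphism, hence a Hodge isometry up to the sign-twist on $T(X)$ indicated in the statement, I would verify injectivity (immediate, since $\varphi$ is induced by a geometric cycle and is compatible with Kuznetsov's equivalence $\mathcal A_X \cong D^b(S_P, \alpha_P)$) and compare discriminants: both lattices have matching rank and discriminant group, so primitivity forces surjectivity onto the index-two sublattice.

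The main obstacle, specific to the nodal setting, is that $F_P \to S_P$ fails to be étale-locally a projective bundle over the exceptional curves of the resolution $\nu \colon S_P \to S'_P$. By Lemma \ref{lem:desc_resolution}, however, the restriction of $F_P$ to each such exceptional $\mathbb P^1$ is the projection $\Bl_{\mathrm{point}} \mathbb P^2 \to \mathbb P^1$, which admits a section, hence contributes only algebraic classes to $H^*(F_P)$. Consequently, the transcendental part of $H^2(F_P, \mathbb Z)$ is unchanged by the modification at the nodes, and the Leray-theoretic and lattice-theoretic analysis of the smooth-$C'_P$ case carries over verbatim to yield the claimed embedding.
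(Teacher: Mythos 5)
Your proposal follows essentially the same route as the paper, which states this proposition without an independent proof, citing the smooth-discriminant case of Voisin and Huybrechts and remarking that the arguments adapt; the only genuinely new input in the nodal setting is exactly the point you identify, namely that by Lemma \ref{lem:desc_resolution} the Brauer--Severi scheme over each exceptional curve of $\nu \colon S_P \to S'_P$ is $\Bl_{\mathrm{point}}\mathbb P^2 \to \mathbb P^1$ and hence only contributes algebraic classes, so the transcendental analysis is unchanged. One small caveat: injectivity of the correspondence map on $T(X)$ is not "immediate from being induced by a geometric cycle," and invoking compatibility with Kuznetsov's equivalence is logically backwards here (that equivalence is presented as an upgrade of this Hodge-theoretic statement); the correct justification is that $T(X)\otimes\mathbb Q$ is an irreducible Hodge structure for very general $X$ and the map is nonzero on $H^{3,1}$.
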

\begin{proof}
  When the discriminant sextic $C_P'$ is smooth, this is follows from \cite[Prop.\ 6.1.18]{huybook}. The general case follows by a specialization argument.
  We sketch the proof in the case that $C_P$ has one ordinary double point, the general case is left to the reader. By Lemma~\ref{lem:singularitiesofcurve}, there then is  unique linear three space $\Pi \subset \mathbb P^5$ and two planes $P', P''$ such that $$X \cap \Pi = P \cup P' \cup P''.$$
  By realizing $X$ as a hyperplane section of a sufficiently general cubic fivefold $Y \subset \mathbb P^6$ and the considering the family of hyperplane sections of $Y$ containing the plane $P \subset X \subset Y$, one obtains a family
  $$P \times \Delta \subset \mathcal {X} \to \Delta$$
  of smooth cubic fourfolds containing a plane over the disc such that $\mathcal{X}_0 \simeq X$ and $\mathcal {X}_t$ contains exactly one plane for $t \neq 0$. Moreover, we get a family of Fano varieties and singular K3 surfaces associated to the cubic fourfolds $\mathcal X_t$ for $t \in \Delta$:
  \begin{equation}\label{eq:singfam}\begin{tikzcd}
      \mathcal F_P' \arrow[r, hook] \arrow[d] & \mathcal F = F(\mathcal X / \Delta) \arrow[d] \\
      \mathcal S_P' \arrow[r]                 & \Delta.
    \end{tikzcd}\end{equation}
  Note that $\mathcal S'_{P, t}$ is a smooth K3 surface for $t \neq 0$.
  As explained in \cite[Sec.\ 4]{moschetti}, see also \cite[Sec.\ 4]{kuzsing}, one can then apply a birational modification along the dual plane $(P')^\vee \subset F(X) \simeq \mathcal F_0$ to obtain families
  $$\begin{tikzcd}
      \mathcal F_P \arrow[r, hook] \arrow[d] & \widetilde{\mathcal F} \arrow[d] \\
      \mathcal S_P \arrow[r]                 & \Delta,
    \end{tikzcd}$$
  isomorphic to $(\ref{eq:singfam})$ over $\Delta \setminus \{0\}$ such that $$F_P \simeq \mathcal F_{P, 0} \to \mathcal S_{P, 0} \simeq S_P$$
  is the Brauer--Severi scheme introduced above and $\widetilde{\mathcal F}_0$ is the Mukai flop $M$ of $F(X)$ in $(P')^\vee$.
  Using the Fano correspondence, we obtain a morphism of integral Hodge structures
  $$\varphi_{P, t} \colon H^4(\mathcal X_t, \mathbb Z) \to H^2(\mathcal F_t, \mathbb Z) \simeq H^2(\widetilde{\mathcal F}_t, \mathbb Z) \to H^2(\mathcal F_{P, t}, \mathbb Z).$$
  On the other hand, pullback induces an injective morphism of integral Hodge structures
  $$H^2(\mathcal S_{P, t}, \mathbb Z) \hookrightarrow H^2(\mathcal F_{P, t}, \mathbb Z).$$

  Since for $t \neq 0$, the cubic fourfold $\mathcal X_t$ contains only one plane, the morphism $\varphi_{P, t}$ induces an integral Hodge isometric embedding
  $$H^4(\mathcal X_t, \mathbb Z) \supset \langle h, [P] \rangle^\perp \underset{-1}\hookrightarrow H^2(\mathcal S_{P, t}, \mathbb Z)_{\mathrm{pr}}(-1),$$
  see the proof of \cite[Prop.\ 6.1.18]{huybook}.
  In fact, we have $$\varphi_{P, t}(\langle h, [P]\rangle^\perp) = \ker(2B \cdot -) \colon H^2(\mathcal S_{P, t}, \mathbb Z) \to \mathbb Z / 2 \mathbb Z),$$ where $B$ is a $B$-lift of $\alpha_{P_1}$, see \cite[Ex.\ 6.1.20]{huybook}. The claim follows by specialization to $t = 0$ and restriction to the transcendental lattices.
\end{proof}

In fact, the above can be upgraded to the level of derived categories: Recall
that
the derived category of the cubic fourfold $X$ admits a semi-orthogonal
decomposition of the form
$$D^b(X) = \langle \mathcal A_X, \mathcal O_X, \mathcal O_X(1), \mathcal O_X(2)
  \rangle,$$
where $\mathcal A_X \coloneqq \langle \mathcal O_X, \mathcal O_X(1), \mathcal
  O_X(2)
  \rangle^{\perp}$ is the Kuznetsov component of $X$, cf. \cite{kuz4fold}.

\begin{theorem}[Kuznetsov \cite{kuz4fold}, Moschetti \cite{moschetti}]
  There is a Fourier--Mukai equivalence
  $$D^b(S_P, \alpha_P) \cong \mathcal A_X.$$
\end{theorem}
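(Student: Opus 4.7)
The plan is to exploit the quadric surface fibration $f_P \colon \Bl_P X \to \mathbb P^2$ obtained by projection from $P$, and apply Kuznetsov's general theorem on derived categories of quadric fibrations. First I would compute $D^b(\Bl_P X)$ in two different ways. On one hand, Orlov's blow-up formula gives a semiorthogonal decomposition whose pieces are $D^b(X)$ together with a copy of $D^b(P) \cong D^b(\mathbb P^2)$ coming from the exceptional divisor. On the other hand, Kuznetsov's quadric-fibration theorem applied to $f_P$ produces a semiorthogonal decomposition whose only ``nontrivial'' piece is $D^b(\mathbb P^2, \mathcal B_0)$, where $\mathcal B_0$ is the sheaf of even parts of the Clifford algebra attached to the quadratic form cutting out the fibration; the remaining pieces are pullbacks of exceptional line bundles from the base $\mathbb P^2$.

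Next I would compare the two decompositions. Using the Kuznetsov component $\mathcal A_X = \langle \mathcal O_X, \mathcal O_X(1), \mathcal O_X(2) \rangle^\perp$ of $D^b(X)$, one transports the exceptional pieces $\mathcal O_X, \mathcal O_X(1), \mathcal O_X(2)$ into $D^b(\Bl_P X)$ and mutates them, together with the exceptional components coming from the blow-up, into a form that matches the collection of base-pullbacks produced by the quadric-fibration decomposition. After this matching is carried out, the two semiorthogonal decompositions must agree in the remaining pieces, yielding an equivalence $\mathcal A_X \cong D^b(\mathbb P^2, \mathcal B_0)$.

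Finally I would identify $D^b(\mathbb P^2, \mathcal B_0)$ with $D^b(S_P, \alpha_P)$. The center of $\mathcal B_0$ is $\mathcal O_{S'_P}$, where $S'_P \to \mathbb P^2$ is the double cover branched along the sextic $C'_P$. Away from the singular locus of $C'_P$, the algebra $\mathcal B_0$ becomes, via this central subalgebra, an Azumaya algebra on $S'_P$ of rank four, whose Brauer class is precisely $\alpha_P = [F_P]$; this gives the generic statement of Kuznetsov.

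The main obstacle, and the reason Moschetti's refinement is needed, is the behavior over the nodes of $C'_P$, where $S'_P$ acquires ordinary double points. Here the Azumaya property fails on $S'_P$, and one must instead extend the construction across the resolution $\nu \colon S_P \to S'_P$. Using the explicit description of $F_P$ over the exceptional curves recalled in Lemma \ref{lem:desc_resolution}---namely, that $F_P|_{\nu^{-1}(x)}$ is the natural projection $\Bl_{\mathrm{pt}} \mathbb P^2 \to \mathbb P^1$, which is étale-locally trivial as a $\mathbb P^1$-bundle---one verifies that the Brauer--Severi structure, and hence the sheaf of algebras, extends to a genuine Azumaya algebra on the whole resolution $S_P$ of Brauer class $\alpha_P$, without changing the derived category. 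This local analysis at the nodes is the technical heart of the argument and yields the desired Fourier--Mukai equivalence $\mathcal A_X \cong D^b(S_P, \alpha_P)$.
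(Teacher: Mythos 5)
The paper does not prove this statement; it is quoted from \cite{kuz4fold} and \cite{moschetti}, so there is no internal argument to compare against. Your outline is a faithful reconstruction of the actual proof in those references: Orlov's blow-up formula versus Kuznetsov's quadric-fibration decomposition for $f_P\colon \Bl_P X \to \mathbb P^2$, mutation to isolate $\mathcal A_X \cong D^b(\mathbb P^2, \mathcal B_0)$, passage to the double cover $S'_P$ via the center of the even Clifford algebra, and Moschetti's local analysis at the nodes of $C'_P$ (exactly the content of Lemma \ref{lem:desc_resolution}) to extend the Azumaya/Brauer--Severi structure across the resolution $S_P$. I see no gap in the sketch.
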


Let $F(X)$ denote the Fano variety of lines on the cubic fourfold $X$. The natural morphism $F'_P \to F(X)$ is an isomorphism onto its image when restricted to the complement of the locus $P^\vee \subset F(X)$ of lines contained in the plane $P$.
We conclude this section by showing that the morphism remains injective on the whole of $F'_P$, thus allowing us to identify $F'_P$ with its image in $F(X)$ whenever the scheme structure is of no importance.

\begin{lemma}
  \label{lem:mapinjective}
  The natural morphism
  $F'_P \to F(X)$
  is injective. That is, for every line $L \subset P$, there is at most
  one $y \in \mathbb P^2$ such that $L$ lies on the residual quadric
  surface $f_P^{-1}(y)$.
\end{lemma}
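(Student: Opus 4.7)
The plan is to reduce the injectivity of $F'_P \to F(X)$ to the ``in particular'' assertion and then to establish the latter directly from the smoothness of $X$. For the reduction, suppose $(y_1, \ell_1), (y_2, \ell_2) \in F'_P$ both map to a line $\ell \subset X$. If $y_1 = y_2$, then $\ell_1$ and $\ell_2$ are lines on the quadric $Q_{y_1}$ coinciding as subvarieties of $\mathbb P^5$, hence they agree as points of $F'_P$. If $y_1 \neq y_2$, then $\Pi_{y_1}$ and $\Pi_{y_2}$ are distinct $3$-planes through $P$, so for dimension reasons their intersection is exactly $P$, forcing $\ell \subset Q_{y_1} \cap Q_{y_2} \subset P$. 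The injectivity therefore reduces to showing that no line $L \subset P$ can lie on two distinct residual quadrics.

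To translate this into linear algebra, I would choose coordinates $[x_0 : x_1 : x_2 : y_0 : y_1 : y_2]$ on $\mathbb P^5$ with $P = \{y_0 = y_1 = y_2 = 0\}$ and expand the cubic equation of $X$ as
\[F = \sum_{i=0}^2 y_i A_i(x) + (\text{terms at least quadratic in } y),\]
with $A_i \in \mathbb C[x_0, x_1, x_2]$ quadratic. Parameterize $\Pi_y$ by $[x_0 : x_1 : x_2 : t] \in \mathbb P^3$ via the embedding into $\mathbb P^5$ with image $[x_0 : x_1 : x_2 : t y_0 : t y_1 : t y_2]$; then $P \subset \Pi_y$ corresponds to $\{t = 0\}$ and the residual quadric is cut out by $\tilde Q_y = \sum_i y_i A_i(x) + t\, B(x, y) + t^2\, C(y)$. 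A line $L = \{\lambda(x) = 0\} \subset P$ then satisfies $L \subset Q_y$ precisely when $\sum_i y_i (A_i|_L) = 0$ in $H^0(L, \mathcal O_L(2))$. The set of such $y \in \mathbb P^2$ is $\mathbb P(\ker \phi_L)$, where $\phi_L \colon \mathbb C^3 \to H^0(L, \mathcal O_L(2))$ is the linear map sending $y$ to $\sum_i y_i A_i|_L$, so the task reduces to proving $\dim \ker \phi_L \leq 1$.

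This last step is the main obstacle, but it yields quickly to smoothness. At any $p \in P$ one computes $dF|_p = \sum_i A_i(p)\, dy_i$, so smoothness of $X$ along $P$ is equivalent to the three quadrics $A_0, A_1, A_2$ having no common zero on $P$. Assume toward a contradiction that $\dim \ker \phi_L \geq 2$, so that $A_0|_L, A_1|_L, A_2|_L$ span at most a line in $H^0(L, \mathcal O_L(2))$. If they all vanish, then $L \subset V(A_0, A_1, A_2)$, contradicting smoothness; otherwise there exist a non-zero quadratic form $q \in H^0(L, \mathcal O_L(2))$ and scalars $c_i$ with $A_i|_L = c_i q$, and any zero of $q$ on $L \cong \mathbb P^1$ is then a common zero of $A_0, A_1, A_2$, once more contradicting smoothness.
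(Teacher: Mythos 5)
Your proof is correct, but it takes a more hands-on route than the paper. The paper also reduces immediately to the ``in particular'' statement, and then argues abstractly: if $\Pi \cap X = P \cup Q$ with $L \subset P \cap Q$, then every $x \in L$ is a singular point of $\Pi \cap X$, so $\Pi \subset T_x X$ for all $x \in L$; citing the classification of lines on smooth cubic fourfolds (namely $\dim \bigcap_{x \in L} T_x X \leq 3$), it concludes that $\Pi = \bigcap_{x \in L} T_x X$ is determined by $L$. Your argument instead proves the relevant dimension bound directly in this special case: writing $F = \sum y_i A_i(x) + O(y^2)$, the $3$-planes through $P$ containing a line of the residual quadric over $L$ correspond exactly to $\mathbb P(\ker \phi_L)$, and your observation that $\Sing(X) \cap P = V(A_0, A_1, A_2)$ turns $\dim \ker \phi_L \leq 1$ into the statement that three quadrics spanning at most a line in $H^0(L, \mathcal O_L(2))$ must have a common zero on $L \cong \mathbb P^1$ --- which is immediate over $\mathbb C$. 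In effect you reprove the cited fact \cite[Cor.\ 2.2.6]{huybook} for lines contained in a plane $P \subset X$, which makes your version self-contained at the cost of a coordinate computation; the paper's version is shorter but leans on an external structural result. Your explicit reduction of injectivity of $F'_P \to F(X)$ to the case $L \subset P$ (via $\Pi_{y_1} \cap \Pi_{y_2} = P$ for $y_1 \neq y_2$) is also carried out correctly, where the paper leaves it implicit.
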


\begin{proof}
  Let $L \subset P$ be a line and assume that there is $y
    \in \mathbb P^2$ such that $L \subset f_P^{-1}(y)$.  The point $y \in \mathbb P^2$ corresponds to a three-dimensional
  linear subspace $\mathbb P^3 \cong \Pi \subset \mathbb P^5$ with $\Pi \cap X
    = P \cup Q$ and $L \subset P \cap Q$. Then $\Pi \subset \bigcap_{x
      \in L} T_x X$. By \cite[Cor.\ 2.2.6]{huybook}, we have $\dim \bigcap_{x \in L} T_x X \leq
    3$. For dimension reasons, we thus have $\Pi = \bigcap_{x \in L} T_x X$, so that $\Pi$ is uniquely determined by $L$. Since $y \in \mathbb P^2$ is the image of $\Pi$ under the projection $\Bl_P \mathbb P^5 \to \mathbb P^2$, the claim follows.
\end{proof}

\begin{remark}
  As explained in \cite[Ex.\ 6.1.8]{huybook}, the intersection $F_P' \cap P^\vee \subset F(X)$ is a
  plane cubic curve. Recall from \cite[Sec.\ 2.2.2]{huybook} that lines $L \subset X$ with
  $\dim \bigcap_{x \in L} T_xX = 3$ are called lines of the second type and form a
  surface $F_2(X) \subset F(X)$. By the argument in the proof of the preceding
  lemma  we have $F_P' \cap P^\vee \subset F_2(X)$. In fact, we even have
  $$F_P' \cap P^\vee = F_2(X) \cap P^\vee \subset F(X).$$
  Indeed, let $L \subset P$ be a line and suppose that $\Pi \coloneqq
    \bigcap_{x \in L} T_x X$ is three-dimensional. As $P \subset \Pi$, we have
  $\Pi \cap X = P \cup Q$ for some quadric $Q \subset \Pi$. For every $x \in L$, we then
  have $\Pi  = T_x (\Pi \cap X)$ and thus $L \subset \Sing(\Pi \cap X) \cap P
    \subseteq Q \cap P \subset Q$.
\end{remark}

\section{Eckardt cubic fourfolds}
\label{sec:eckardt}
In order to show that the associated K3 surfaces of two planes on a cubic fourfold intersecting in a point or a line are not isomorphic,
we are going to use a degeneration argument involving Eckardt cubic fourfolds, see Theorem \ref{thm:notiso} and Theorem \ref{thm:notiso2}.  In this section, we briefly recall the definition of Eckardt cubic fourfolds and prove the properties needed to make the degeneration argument work.

Let $X$ be a smooth cubic fourfold. Recall that there is a one-dimensional
family of lines going through a general point on $X$. A point $p \in X$ is called an \emph{Eckardt point} if the
family of lines through $p$ is (at least) two-dimensional. A cubic fourfold is
called an \emph{Eckardt cubic fourfold} if it contains at least one Eckardt point.
Eckardt cubic fourfolds and their moduli have recently been studied in \cite{modpairs}. Recall the following well-known characterization of Eckardt points:
\begin{proposition}[]\label{prop:desceckardt}
  Let $X$ be a smooth cubic fourfold. The following are equivalent:
  \begin{enumerate}
    \item The point $p \in X$ is an Eckardt point;
    \item There is an involution on $X$ which fixes $p$ and a hypersurface not
          containing $p$.
    \item $X$ is projectively equivalent to a cubic fourfold given by an
          equation of the form
          $$x_0^2 l(x_1, \dots, x_5) + f(x_1, \dots, x_5)$$
          and the equivalence identifies $p$ with $[1\colon 0 \colon \dots \colon 0]$.
  \end{enumerate}
\end{proposition}

\begin{proof}
  We prove that (i) implies (iii), the other implications are clear. Let $p \in X$ be a point such that the family of lines through $p$ is two-dimensional. Pick coordinates $x_0, \dots, x_5$ on $\mathbb P^5$ such that $p = [1 \colon 0 \colon \dots \colon 0]$ and $T_p X = V(x_5) \subset \mathbb P^5$. Then, $X$ is cut out by an equation of the form
  $$x_0^2x_5 + x_0 q(x_1, \dots, x_5) + c(x_1, \dots, x_5),$$
  where $\deg q = 2$ and $\deg c = 3$. The family of lines through $p$ is parametrized by
  $$V(x_0, x_5, q, c) \subset \mathbb P^5.$$
  By assumption, the quadric $V(x_0, x_5, q)$ and the cubic $V(x_0, x_5, c)$ thus share a two-dimensional irreducible component $S \subset V(x_0, x_5) \cong \mathbb P^3.$ We claim that $S = V(x_0, x_5, c)$. Suppose that $\deg S = 1$. Then, the union of lines parametrized by $S$ form a $\mathbb P^3 \subset X$, contradicting the smoothness of $X$. Similarly, if $\deg S = 2$, then the union of lines parametrized by $S$ is a three-dimensional quadric cone. The linear span of this cone is a $\mathbb P^4$, which intersects $X$ in the cone over $S$ and a linear $\mathbb P^3$. Again, this contradicts the smoothness of $X$. All in all, it follows that $X \cap T_p X$ is a cone with vertex $p$ over a cubic surface, which is easily seen to imply (iii).
\end{proof}

Let $Y = V(f(x_1, \dots, x_5), x_0) \subset V(x_0) \cong \mathbb P^4$ and $S = V(f, l, x_0) \subset
  V(l, x_0) \cong \mathbb P^3$. Note that one can recover $X$ from the pair $(Y, V(l, x_0))$ by Proposition \ref{prop:desceckardt}.(iii). Let $L \subset S$ be one of the $27$ lines and $P
  \coloneqq \overline{pL}$. Note that we have $P \cap Y = L$. Thus there is a
commutative diagram
$$
  \begin{tikzcd}
    \Bl_L \mathbb P^4 \arrow[rd, "q"] & \Bl_L Y \arrow[r, hook] \arrow[l,
      hook'] \arrow[d, "q_Y"] & \Bl_P X
    \arrow[ld, "q_X"]  \\
    & \mathbb P^2      &
  \end{tikzcd}$$

Let $H  \coloneqq q(\Bl_L V(l)) \subset \mathbb P^2$ denote the image of the projection of $V(l)$. The
fibers of $q_X$ can be described as follows:
\begin{enumerate}
  \item Over $y \in \mathbb P^2 \setminus H$, the fiber $q_X^{-1}(y)$ is the
        double cover of $\mathbb P^2 \cong q^{-1}(y)$ branched along the conic
        $q_Y^{-1}(y)$.
  \item Over $y \in H$, the fiber $q_X^{-1}(y)$ is the cone over the conic
        $q_Y^{-1}(y)$.
\end{enumerate}

Let $C_L \subset \mathbb P^2$ and $C_P' \subset \mathbb P^2$ denote the
discriminant curve of $q_Y$ and $q_X$. Recall that $C_L$ is a quintic curve and
$C_P'$ is a sextic curve. From the above discussion, we immediately get:
\begin{lemma}\label{lem:discriminanteck}The discriminant sextic $C_P' \subset \mathbb P^2$ is the union of the quintic curve $C_L \subset \mathbb P^2$ and the line $H \subset \mathbb P^2$, i.e.,
  $$C_P' = C_L \cup H \subset \mathbb P^2.$$
\end{lemma}
The key input to the proof of Theorem \ref{thm:notisomain} is the following observation.
\begin{lemma}
  Let $L$ and $L'$ be two general lines on a cubic threefold. Then $C_L$
  and $C_{L'}$ are not isomorphic.
  \label{lem:nonisothreefold}
\end{lemma}
\begin{proof}
  Recall that the fiber of the Prym map
  $$\mathcal P \colon \mathcal R_6 \to \mathcal A_5$$
  over the intermediate Jacobian of a cubic threefold $Y$ is birational to the Fano variety of lines on $Y$, see \cite[Sec.\ 1.4. on p.\ 89]{donagistructureprym}. A general point of $\mathcal P^{-1}([IJ(Y)])$ thus corresponds to the \'etale double cover of the discriminant quintic $C_L$ induced by the conic bundle $\Bl_L Y \to \mathbb P^2$, where $L \subset Y$ is a general line. Since the smooth quintic $C_L$ admits only finitely many \'etale double covers, the fact the $\mathcal P^{-1}([IJ(Y)])$ is birational to the two-dimensional Fano variety of lines on $Y$ implies that for a general pair of lines $L, L' \subset Y$, the quintics $C_L$ and $C_{L'}$ are not isomorphic.
\end{proof}
In order to pass from the branch curves to the associated K3 surfaces, we need the following lemma.
\begin{lemma}
  Let $X$ be a very general Eckardt cubic fourfold and $P \subset X$ a plane.
  The automorphism group of the associated K3 surface $S_P$ is generated by
  the covering involution.
  \label{lem:isoeck}
\end{lemma}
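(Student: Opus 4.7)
The plan is to follow the strategy of Lemmas \ref{lem:isok3} and \ref{lem:isoK3p}: compute the Picard lattice of $S_P$ explicitly for a very general Eckardt cubic fourfold, and then invoke the classification of automorphism groups of K3 surfaces by their Picard lattice (see \cite[Ch.\ 15]{lecK3}). Since the covering involution $\iota \in \Aut(S_P)$ is automatic, only the inclusion $\Aut(S_P) \subseteq \langle \iota \rangle$ requires work.

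First, by Lemma \ref{lem:discriminanteck}, the discriminant sextic splits as $C'_P = C_L \cup H$, the union of a (generically smooth) quintic and a line, meeting in $5$ points. For a very general Eckardt cubic fourfold these intersections are transverse, so $S'_P$ is a nodal K3 with exactly $5$ ordinary double points lying over $C_L \cap H$, and the minimal resolution $S_P$ carries $5$ disjoint exceptional $(-2)$-curves $e_1, \ldots, e_5$. Together with the pullback $h$ of the hyperplane class on $\mathbb P^2$ and the strict transform $\tilde{H}$ of the preimage of $H$, these classes span a distinguished sublattice $\Lambda \subseteq \Pic(S_P)$ whose Gram matrix can be written down by elementary intersection theory on the resolution.

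The main technical step is to upgrade this to an equality $\Pic(S_P) = \Lambda$ for the very general Eckardt cubic fourfold. I would establish this by a Noether--Lefschetz-type dimension count, comparing the moduli of Eckardt cubic fourfolds containing a plane (cf.\ \cite{modpairs}) with the period domain of $\Lambda$-polarized K3 surfaces: an extra algebraic class in $\Pic(S_P)$ would cut out a proper analytic subvariety in moduli and hence be avoided generically.

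Once $\Pic(S_P) = \Lambda$ is pinned down, one deduces $\Aut(S_P) = \langle \iota \rangle$ exactly as in the cited lemmas: a very general transcendental Hodge structure forces every automorphism to act as $\pm \mathrm{id}$ on $T(S_P)$, so up to composition with $\iota$ it suffices to rule out nontrivial automorphisms acting trivially on $T(S_P)$ and preserving the ample chamber of $\Lambda$, which is a direct lattice-theoretic check. The main obstacle is the Noether--Lefschetz step, namely verifying that the Picard rank of $S_P$ does not jump generically on the Eckardt locus; this is the same type of Hodge-theoretic input that is implicit in the analogous lemmas.
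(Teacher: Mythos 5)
Your proposal follows essentially the same route as the paper: identify $S_P$ as the resolution of the double cover of $\mathbb P^2$ branched along the quintic $C_L$ union the line $H$ (Lemma \ref{lem:discriminanteck}), pin down its Picard lattice for the very general Eckardt cubic fourfold, and conclude by the lattice-theoretic classification of automorphism groups of K3 surfaces. The paper compresses all of this into a single citation of the classification in \cite{atlasK3finite}, so your sketch is just an unpacking of that reference rather than a different argument.
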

\begin{proof}
  Given a general plane quintic $C$ and a general line $H$ in $\mathbb P^2$, one can construct a Eckardt cubic fourfold $X$ and a plane $P \subset X$ such that the associated discriminant sextic $C_P' \subset \mathbb P^2$ is the union of $C$ and $H$: First, pick a pair of a cubic threefold $Y \subset \mathbb P^4$ and a line $L \subset Y$ such that the quintic $C$ is the discriminant quintic of the projection $\Bl_L Y \to \mathbb P^2$. The line $H \subset \mathbb P^2$ is the image of a unique hyperplane $H' \subset \mathbb P^4$ via the projection $\Bl_L \mathbb P^4 \to \mathbb P^2$. Let $X$ denote the Eckardt cubic fourfold associated to the pair $(Y, H')$, cf.\ Proposition~\ref{prop:desceckardt}, and let $P$ denote the span of the Eckardt point and the line $L \subset Y \subset X$. Then, we have
  $$C_P' = C_L \cup H \subset \mathbb P^2$$
  by Lemma~\ref{lem:discriminanteck}.

  Hence, for a very general Eckardt cubic fourfold $X$ and a plane $P \subset X$, the associated K3 surface $S_P$ is a very general member of the family of resolutions of double covers of $\mathbb P^2$ branched along the union of a quintic and a line.
  The claim then follows from the classification of automorphism groups of K3 surfaces in
  \cite{atlasK3finite}: We have $\Pic(S_P) \simeq U(2) \oplus \mathbf{D}_4$ by \cite[Prop.\ 6.11]{atlasK3finite} and thus $\Aut(S_P) \simeq \mathbb Z /  2 \mathbb Z$ by \cite[p.\ 92]{atlasK3finite}.
\end{proof}
\begin{corollary}
  \label{lem:notisopoint}
  There exists an Eckardt cubic fourfold $X$ and two planes $P_1, P_2 \subset
    X$ intersecting in a point such that $S_{P_1}$ and $S_{P_2}$ are not
  isomorphic.
\end{corollary}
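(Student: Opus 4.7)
The plan is a direct construction: produce a specific Eckardt cubic fourfold $X$ satisfying the required properties, by combining the structural description $C'_P = C_L \cup H$ from Lemma \ref{lem:discriminanteck} with the non-isomorphism of quintics from Lemma \ref{lem:nonisothreefold}. The key reduction is as follows. Assume $X$ is a very general Eckardt cubic fourfold containing two planes $P_1, P_2$ meeting in the Eckardt point $p$; writing $P_i = \overline{pL_i}$ for disjoint lines $L_i \subset S$, Lemma \ref{lem:isoeck} gives $\Aut(S_{P_i}) = \mathbb{Z}/2\mathbb{Z}$, so $S_{P_i}$ admits a unique realization as a double cover of $\mathbb{P}^2$, and any isomorphism $S_{P_1} \cong S_{P_2}$ induces a projective equivalence of the branch sextics $C'_{P_1} \cong C'_{P_2}$. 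Since $C'_{P_i} = C_{L_i} \cup H$ decomposes into a quintic and a line (the line $H$ being the same in both, as it depends only on $l$), such an equivalence must restrict to an isomorphism $C_{L_1} \cong C_{L_2}$ of the irreducible quintic components. It therefore suffices to exhibit a very general Eckardt cubic fourfold admitting two disjoint lines on $S$ whose associated quintics are non-isomorphic.

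To construct such a configuration I will start from a general cubic threefold $Y = V(f) \subset \mathbb{P}^4$. By Lemma \ref{lem:nonisothreefold}, two general lines $\ell_1, \ell_2$ on $Y$ have non-isomorphic $C_{\ell_i}$; for generic such pairs they are skew, and hence span a unique hyperplane $H = V(l) \subset \mathbb{P}^4$. The cubic surface $S = Y \cap H$ then contains both $\ell_i$ among its $27$ lines and is smooth for generic choices. The associated Eckardt cubic fourfold $X = V(x_0 l + f) \subset \mathbb{P}^5$ has Eckardt point $p = [1\colon 0 \colon \cdots \colon 0]$, and the planes $P_i = \overline{p \ell_i}$ meet only at $p$ because $\ell_1 \cap \ell_2 = \emptyset$. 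By construction $C_{L_i} = C_{\ell_i}$ are non-isomorphic. The existence of such a disjoint pair on $S$ is a Zariski open condition on the moduli of Eckardt cubic fourfolds, and the conclusion of Lemma \ref{lem:isoeck} holds on the complement of a countable union of proper closed subsets, so both hold simultaneously on a very general Eckardt cubic fourfold, as required.

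The main obstacle is arranging the lines $\ell_1, \ell_2$ on a common hyperplane section of $Y$, since Lemma \ref{lem:nonisothreefold} is a statement about lines on the cubic threefold $Y$ and not a priori about lines on a chosen cubic surface $S \subset Y$. The resolution is to reverse the order of choices: first pick the two lines on $Y$ so as to ensure $C_{\ell_1} \not\cong C_{\ell_2}$, then let $H$ be the hyperplane they span, which automatically realizes $S = Y \cap H$ as a cubic surface containing both. One must still verify that this hyperplane section is smooth for generic choices (i.e., that the spanned $H$ avoids the dual variety of $Y$) and that the resulting $X$ lies in the very-general Eckardt locus; these are routine dimension counts which I expect to pose no real difficulty.
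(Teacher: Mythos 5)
Your proposal is correct and follows essentially the same route as the paper: choose two general skew lines on a cubic threefold whose discriminant quintics are non-isomorphic (Lemma \ref{lem:nonisothreefold}), take the hyperplane they span to build the Eckardt cubic fourfold with $P_i = \overline{pL_i}$, and conclude via the decomposition of Lemma \ref{lem:discriminanteck} together with Lemma \ref{lem:isoeck}. The only difference is that you make explicit the genericity bookkeeping (smoothness of the hyperplane section, compatibility of the very-general locus with the open condition), which the paper leaves implicit.
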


\begin{proof}
  Let $Y \subset \mathbb P^4$ be a cubic threefold. Let $L_1, L_2 \subset Y$ be
  two general lines as in Lemma \ref{lem:nonisothreefold}. Then $L_1$ and $L_2$
  span a three-dimensional linear subspace $\overline{L_1 L_2} \subset \mathbb
    P^4$. Let $S = Y \cap \overline{L_1L_2}$. Let $X \subset \mathbb P^5$ denote the associated Eckardt cubic
  fourfold with Eckardt point $p \in X$. For $i = 1, 2$, let $P_i \coloneqq
    \overline{pL_i}$ be the corresponding planes. By Lemma
  \ref{lem:discriminanteck}, we have $C_{P_1} \not \cong C_{P_2}$. Conclude by
  Lemma \ref{lem:isoeck}.
\end{proof}

Note that $P_1$ and $P_2$ intersect
in a point if and only if the lines $L_1$ and $L_2$ are disjoint. If the lines intersect, then
$P_1$ and $P_2$ intersect along a line.

\begin{corollary}
  \label{cor:notisoline}
  There exists an Eckardt cubic fourfold $X$ and two planes $P_1, P_2 \subset
    X$ intersecting along a line such that $S_{P_1}$ and $S_{P_2}$ are not
  isomorphic.
\end{corollary}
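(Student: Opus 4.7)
The strategy is to mirror the proof of Corollary \ref{lem:notisopoint} verbatim, except that the two auxiliary lines on the cubic threefold must now be chosen to intersect. First I would fix a general cubic threefold $Y \subset \mathbb P^4$ and look for two \emph{intersecting} general lines $L_1, L_2 \subset Y$, i.e., with $L_1 \cap L_2$ a single point $q \in Y$. Then, as in the construction of Section \ref{sec:eckardt}, I would build the Eckardt cubic fourfold $X \subset \mathbb P^5$ with Eckardt point $p$ and set $P_i \coloneqq \overline{pL_i}$. Since $L_1$ and $L_2$ share the point $q$, the planes $P_1$ and $P_2$ share the line $\overline{pq}$, so they meet along a line (this is the observation made in the sentence preceding the corollary). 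By Lemma \ref{lem:discriminanteck}, the discriminant sextics decompose as $C_{P_i} = C_{L_i} \cup H \subset \mathbb P^2$ with $H$ a fixed line independent of $i$.

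The key technical point is to guarantee that such a pair $(L_1, L_2)$ can be chosen with $C_{L_1} \not\cong C_{L_2}$, in contrast to the proof of Corollary \ref{lem:notisopoint}, where the disjoint pairs formed a $4$-dimensional locus. Here the locus of ordered intersecting pairs of lines on $Y$ is a $3$-dimensional subvariety of $F(Y) \times F(Y)$: it is (generically) a $\mathbb P^1$-bundle over the $2$-dimensional Fano surface $F(Y)$. By the argument recalled in Lemma \ref{lem:nonisothreefold}, the fibers of the Prym map $\mathcal R_6 \to \mathcal A_5$ over the locus of intermediate Jacobians of cubic threefolds are $2$-dimensional, and the map $L \mapsto [C_L]$ from the Fano surface into this fiber is generically finite. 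Consequently, for fixed $L_1$ only finitely many $L_2 \in F(Y)$ satisfy $C_{L_1} \cong C_{L_2}$, so the locus where $C_{L_1} \cong C_{L_2}$ has codimension at least two in $F(Y) \times F(Y)$ and hence meets the $3$-dimensional locus of intersecting pairs in a proper subvariety. A general intersecting pair $(L_1, L_2)$ therefore satisfies $C_{L_1} \not\cong C_{L_2}$.

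It remains to pass from $C_{L_1} \not\cong C_{L_2}$ to $S_{P_1} \not\cong S_{P_2}$. For a general choice, the quintics $C_{L_i} \subset \mathbb P^2$ are smooth of genus $6$, while $H$ is a line, so any abstract isomorphism $C_{P_1} \cong C_{P_2}$ must identify the rational component with the rational component and the quintic with the quintic, contradicting $C_{L_1} \not\cong C_{L_2}$. Hence $C_{P_1}' \not\cong C_{P_2}'$ as plane sextics. By Lemma \ref{lem:isoeck}, the covering involution is the unique nontrivial automorphism of $S_{P_i}$, so the double cover $S_{P_i} \to \mathbb P^2$ is intrinsic; therefore an isomorphism $S_{P_1} \cong S_{P_2}$ would induce one between the branch sextics, which we have just excluded.

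The main obstacle I anticipate is the dimension count in the second paragraph: one has to check that the $3$-dimensional family of intersecting pairs genuinely exceeds what can lie in a single fiber of the ``isomorphism class of $C_L$'' map, rather than accidentally being contained in such a fiber. Both the statement that $F(Y) \to \mathcal R_6$ is generically finite and the observation that the intersecting pair locus is $3$-dimensional are soft, but they must be combined correctly, and one must also verify that the resulting Eckardt cubic fourfold $X$ can be arranged to be sufficiently general for Lemma \ref{lem:isoeck} to apply.
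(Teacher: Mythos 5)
Your proof is correct, but it takes a genuinely different route from the paper. The paper does not redo the construction for intersecting lines at all: it deduces Corollary \ref{cor:notisoline} from Corollary \ref{lem:notisopoint} by contraposition. Namely, if $P_1, P_2$ meet in a point and $S_{P_1} \not\cong S_{P_2}$, one picks a third plane $P$ (coming from a line on the cubic surface meeting both $L_1$ and $L_2$, which exists since two disjoint lines among the $27$ are both met by five others) so that $P \cap P_1$ and $P \cap P_2$ are lines; if every line-intersecting pair had isomorphic K3 surfaces, transitivity would force $S_{P_1} \cong S_P \cong S_{P_2}$, a contradiction. Your argument instead constructs the example directly, choosing two \emph{intersecting} lines on the cubic threefold and rerunning the dimension count. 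This costs you the extra verification that genericity can be imposed inside the $3$-dimensional incidence locus of intersecting pairs --- your codimension-two estimate for the locus $\{C_{L_1} \cong C_{L_2}\}$ handles this, resting on the same generic finiteness of $L \mapsto C_L$ that underlies Lemma \ref{lem:nonisothreefold}, so it is no worse founded than the paper's own input --- and it also requires the small observation that a hyperplane $V(l)$ containing both intersecting lines exists (they span only a $\mathbb P^2$, so there is a pencil of such hyperplanes). What your approach buys is an explicit example and independence from the point-intersection case; what the paper's approach buys is brevity and the avoidance of any new general-position argument. Both are valid.
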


\begin{proof}
  If $P_1$ and $P_2$ are two planes intersecting in a point, then there is a
  plane $P$ such that $P_1$ and $P$, and $P_2$ and $P$ intersect in a line.
  Hence, the claim follows from the preceding corollary by contraposition.
\end{proof}

\section{Cubic fourfolds containing two planes intersecting along a line}
\label{sec:geometry}
Let $X$ be a cubic fourfold containing two planes $P_1, P_2 \subset
  X$ intersecting
along a line $L = P_1 \cap P_2$.
The two planes span a three-dimensional subspace $\Pi \subset \mathbb P^5$ with $\Pi
  \cap X = P_1 \cup P_2 \cup P_3$, where $P_3$ is a third plane contained in $X$.
In the following, we assume that $X$ is a very general member of the family of
cubic fourfolds containing two planes intersecting along a line. Using the cohomological characterization of planes on a cubic fourfold \cite{voisintorelli}, see also \cite[Lem.\ 4.1, Cor.\ 4.2]{marquand}, one checks that
the cubic fourfold $X$ contains no planes other than $P_1, P_2$ and $P_3$.
\subsection{The associated K3 surfaces}
\label{sec:theassocK3}

As described in Section \ref{sec:basics}, we obtain three associated twisted K3 surfaces $(S_{P_i},
  \alpha_{P_i} \in \Br(S_{P_i})[2])$ for $i = 1, 2, 3$, which are resolutions
of double
covers of $\mathbb P^2$ branched along the plane sextics $C_{P_i}' \subset \mathbb
  P^2$. Due to the absence of planes other than $P_1, P_2$ and $P_3$, the
characterization of singular points of $C'_{P_i}$ in Lemma \ref{lem:singularitiesofcurve}
implies that the discriminant curves $C_{P_i}'$ have a unique ordinary double point each. In fact, by
\cite[Prop.\ 2.6]{stellarisingular}, we can assume that the $C_{P_i}'$ are very
general members of the family of plane sextic curves with an ordinary double
point. In particular, their normalizations $C_{P_i} \subset \Bl_{\Sing{C'_{P_i}}} \mathbb
  P^2$ are smooth irreducible curves of genus nine. Hence, the Picard groups of the K3 surfaces $S_{P_i}$ can be described as follows:
\begin{lemma}
  \label{lem:descpic}
  The
  Picard groups of the associated K3 surfaces $S_{P_i}$ are generated by the
  pullback $h \in \Pic(S_{P_i})$ of the ample generator on
  $\mathbb P^2$ and the class $s \in \Pic(S_{P_i})$ of the exceptional curve resolving
  the unique singularity of $S'_{P_i}$. With respect to this basis, the intersection form is given by
  $$h^2 = 2, s^2 = -2 \text{ and } h \cdot s = 0,$$
  and, therefore, is of discriminant $-4$.
\end{lemma}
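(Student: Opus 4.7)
The strategy is to first compute the intersection form directly from the double cover geometry, and then deduce generation by combining a dimension count with a lattice-theoretic analysis ruling out the unique possible overlattice.

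For the intersection form, I would use the realization of $S_{P_i}$ as a double cover $\pi \colon S_{P_i} \to \widetilde{\mathbb{P}}^2 \coloneqq \Bl_{\Sing(C'_{P_i})} \mathbb{P}^2$ branched along the smooth genus-nine curve $C_{P_i}$, as recalled in Section \ref{sec:basics}. Writing $H \in \Pic(\widetilde{\mathbb{P}}^2)$ for the pullback of the hyperplane class and $E$ for the exceptional divisor, one has $h = \pi^* H$ and $s = \pi^* E$. Since $C_{P_i}$ meets $E$ transversely in the two branches of the node and is otherwise disjoint from $E$, the preimage $\pi^{-1}(E)$ is an irreducible smooth rational curve, which identifies $s$ with the $(-2)$-curve resolving the $A_1$-singularity of $S'_{P_i}$. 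The intersection numbers $h^2 = 2$, $h \cdot s = 0$, $s^2 = -2$ then follow from the projection formula $(\pi^* \alpha) \cdot (\pi^* \beta) = 2\,\alpha \cdot \beta$ together with $H^2 = 1$, $H \cdot E = 0$, $E^2 = -1$ on $\widetilde{\mathbb{P}}^2$.

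For generation, I would combine a dimension count with the classification of overlattices. The sublattice $\Lambda \coloneqq \mathbb{Z} h \oplus \mathbb{Z} s$ has discriminant $-4$, and a quick check of its discriminant form shows that its only proper even overlattice of rank two is the hyperbolic plane $U$, obtained by adjoining $(h+s)/2$. The family of cubic fourfolds containing two planes meeting along a line is $18$-dimensional; by \cite[Prop.\ 2.6]{stellarisingular} the nodal sextic $C'_{P_i}$ can be taken very general in its $18$-dimensional family. The moduli space of $\Lambda$-polarized K3 surfaces is also $18$-dimensional, and the associated period map is dominant, so $\Pic(S_{P_i})$ contains $\Lambda$ with finite index and, away from a countable union of proper subvarieties of the moduli, equals $\Lambda$.

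\textbf{The main obstacle} is the saturation step: one needs to rule out that the period map lands in the $18$-dimensional $U$-polarized sublocus, i.e., that $(h+s)/2 \in \Pic(S_{P_i})$ for every $X$ in the family. Geometrically this is equivalent to the elliptic fibration $|h+s| \colon S_{P_i} \to \mathbb{P}^1$ admitting a section. I would tackle this by exhibiting a single explicit cubic fourfold $X_0$ in the family for which a direct lattice computation shows $\Pic(S_{P_{i,0}}) = \Lambda$; upper semicontinuity of the Picard rank then transfers the equality to the very general $X$. Alternatively, one may appeal to the later Theorem \ref{thm:reltodp}, which realizes $S_{P_i}$ as a nontrivial Tate--\v{S}afarevi\v{c} twist of its Jacobian K3 surface and thereby forbids the existence of a section of the elliptic fibration $S_{P_i} \to \mathbb{P}^1$.
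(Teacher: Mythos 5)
Your computation of the intersection form is correct, and your framing of the generation step is the right one. For comparison: the paper gives no proof of this lemma at all --- it is stated as an immediate consequence of the fact that, by \cite[Prop.\ 2.6]{stellarisingular}, the curve $C'_{P_i}$ may be taken to be a very general one-nodal sextic. You have correctly isolated the actual mathematical content, namely that $\Lambda = \mathbb Z h \oplus \mathbb Z s$ has a unique proper even overlattice, isomorphic to $U$ and obtained by adjoining $(h+s)/2$, and that a dimension count cannot exclude it: since $\Lambda^\perp = U^\perp$ inside the K3 lattice, the period domains for $\Lambda$- and $U$-polarized K3 surfaces literally coincide, so dominance of the period map is blind to this distinction. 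Flagging this as the main obstacle is a genuine improvement over the paper's silence.

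That said, neither of your two proposed closings is watertight as written. The appeal to Theorem \ref{thm:reltodp} is circular: the proofs of Theorem \ref{thm:descbrauer} and of the nontriviality of the classes $\beta_i \in \Sha(S)$ pass through Lemma \ref{lem:picdesc} and Lemma \ref{lem:discTx}, whose discriminant counts use $\disc \Pic(S_{P_i}) = -4$, i.e., the present lemma. In the first route, ``upper semicontinuity of the Picard rank'' is the wrong tool, since $\Lambda$ and $U$ have the same rank; what one actually needs is injectivity of the specialization map on Picard groups, or equivalently the observation that the saturation of $\langle h, s\rangle$ in $H^2(S_{P_i}, \mathbb Z)$ is deformation-invariant, so that one example with saturation $\Lambda$ suffices. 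Also, the class $h+s$ is not nef ($(h+s)\cdot s = -2$); the elliptic fibration is $|h-s|$, and integrality of $(h+s)/2$ is equivalent to $2$-divisibility of the fiber class $f = h-s$, not to the existence of a section of $|h+s|$. A clean way to finish without an explicit example: for the very general one-nodal sextic no line through the node is residually bitangent, so every fiber of $|h-s|\colon S_{P_i} \to \mathbb P^1$ is irreducible and reduced of class $h-s$. If $h-s = 2e$ with $e \in \Pic(S_{P_i})$, then $e$ is effective by Riemann--Roch (as $e^2 = 0$ and $e\cdot h = 1 > 0$), and $e \cdot (h-s) = 0$ forces every component of a member of $|e|$ to lie in a fiber of $|h-s|$, hence to have class $h-s$; this makes $e$ a positive integral multiple of $h-s$, contradicting $2e = h-s$. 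Therefore $\Pic(S_{P_i}) = \Lambda$.
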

\begin{proof}
  By construction, the sublattice $\langle h, s \rangle \subset \Pic(S_{P_i})$ of rank two admits the above description. Conversely, the surjectivity of the period map \cite{todorov}, cf.\ \cite[Thm.\ 7.4.1]{lecK3}, ensures that there exists a K3 surfaces $S'$ with $\Pic(S') = \langle h, s \rangle$, where $h$ is a big and nef class satisfying $h^2 = 2, s^2 = -2$ and $h \cdot s = 0$. Up to changing a sign, the class $s \in \Pic(S')$ represents a smooth rational curve on $S'$, cf.\ the discussion in \cite[Sec.\ 1.1.4]{lecK3}. The linear system $|h|$ then realizes $S' \to \mathbb P^2$ as the resolution of a double cover of $\mathbb P^2$ branched along a sextic with one node, cf.\ \cite[Rem.\ 2.2.4]{lecK3}. Since the $S_{P_i}$ are very general members of the family of such double covers by the discussion preceding this lemma, the claim follows.
\end{proof}

\begin{remark}\label{rem:basesid}
  The composition of the double cover $S_{P_i} \to \Bl_{\Sing(C'_{P_i})} \mathbb
    P^2$ with the projection from the unique singular point of the discriminant sextic yields an elliptic
  fibration on $S_{P_i}$ corresponding to the isotropic class $f = h -s$, which admits the following geometric description:
  Generically, points on $S_{P_i}$ parametrize rulings on quadric surfaces $Q
    \subset X$ such that $Q$ and $P_i$ span a three-dimensional linear subspace of
  $\mathbb P^5$.
  The elliptic fibration is then given by sending a ruling on a quadric $Q \subset X$ residual to $P_i$ to the hyperplane $H = \overline{Q \cup \Pi}$ spanned by $Q$ and the unique three-dimensional linear space $\Pi \subset \mathbb P^5$ containing the planes $P_1, P_2$ and $P_3$ in
  $$\Pi^\vee \coloneqq \{H \in |\mathcal O_{\mathbb P^5}(1)|
    \mid \Pi \subset H\} \cong \mathbb P^1.$$
\end{remark}

\subsection{The associated K3 surfaces are not isomorphic}

Let us begin by showing that the associated K3 surfaces $S_{P_i}$ are not isomorphic
to each other.

\begin{lemma}
  \label{lem:isok3}
  The automorphism group $\Aut(S_{P_i})  = \mathbb Z / 2 \mathbb Z$ is generated by the covering
  involution.
\end{lemma}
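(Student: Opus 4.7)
The plan is to show that any $\phi \in \Aut(S_{P_i})$ acts trivially on the Picard lattice and as $\pm \mathrm{id}$ on the transcendental lattice, with the nontrivial sign realised by the covering involution $\iota$; the statement then follows from the strong Torelli theorem.

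First, I would analyse the action of $\phi^\ast$ on $\Pic(S_{P_i}) = \mathbb{Z}h \oplus \mathbb{Z}s$. By Lemma \ref{lem:descpic}, solving $a^2 - b^2 = \pm 1$ shows that the only classes of square $\pm 2$ in $\Pic(S_{P_i})$ are $\pm h$ and $\pm s$. The effective cone of $S_{P_i}$ is spanned by the $(-2)$-curve $s$ and the elliptic fibre class $f = h - s$, so only $h$ and $s$ among these classes are effective. Since $\phi^\ast$ is an isometry preserving effectivity, it must fix $h$ and $s$, and hence act trivially on $\Pic(S_{P_i})$.

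Next, I would consider the action on $T(S_{P_i})$. The $18$-dimensional family of cubic fourfolds containing two planes meeting in a line maps, via the period map, dominantly to the $18$-dimensional moduli of K3 surfaces with the Picard lattice of Lemma \ref{lem:descpic}. For very general $X$, the transcendental Hodge structure $T(S_{P_i})$ is therefore simple with endomorphism algebra $\mathbb{Q}$, and its only Hodge isometries are $\pm \mathrm{id}$. The covering involution $\iota$ is anti-symplectic (since the target of the double cover is a rational surface) and acts trivially on $\Pic(S_{P_i})$ (on $h$ since it is pulled back from $\mathbb{P}^2$, and on $s$ since it preserves the unique exceptional curve), hence realises $(\mathrm{id}_{\Pic}, -\mathrm{id}_{T})$. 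By the strong Torelli theorem for K3 surfaces, $\phi$ is determined by its action on $H^2(S_{P_i}, \mathbb{Z})$, so $\phi \in \{\mathrm{id}, \iota\}$.

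The main obstacle is translating the very-generality of $X$ into the endomorphism-algebra claim for $T(S_{P_i})$; this reduces to the dimension count above together with the fact that K3 surfaces with extra Hodge endomorphisms on their transcendental lattice form a countable union of proper Noether--Lefschetz-type subvarieties of the moduli space.
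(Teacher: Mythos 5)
Your proposal is correct and is essentially the standard argument (trivial action on $\Pic$ via the classification of $(\pm 2)$-classes and effectivity, $\pm\mathrm{id}$ on the irreducible transcendental Hodge structure for very general members of the $18$-dimensional family, then strong Torelli); the paper simply cites the classification of automorphism groups of K3 surfaces with this Picard lattice together with the genericity assumption, and your argument is exactly what that citation unpacks to. No gaps.
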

\begin{proof}
  This follows from the genericity assumption and the classification of
  automorphism groups of K3 surfaces, see \cite[Cor.\ 3.4]{galluzilombardopeters}.
\end{proof}

In particular, the $K3$ surfaces $S_{P_i}$ are realized in a unique way as a double cover of $\mathbb P^2$. As a crucial consequence, an isomorphism between the K3 surfaces $S_{P_1}$ and $S_{P_2}$ would induce an isomorphism between the discriminant curves $C_{P_1}'$ and $C_{P_2}'$.

\begin{theorem}
  \label{thm:notiso}
  Let $P_1, P_2 \subset X$ be a cubic fourfold containing two planes intersecting along a
  line, and assume that $X$ is very general with this property.
  Then, the associated $K3$ surfaces $S_{P_1}$ and $S_{P_2}$ are not isomorphic.
\end{theorem}
\begin{proof}
  By the preceding observation, it suffices to find a degeneration of the cubic fourfolds for which the
  associated sextics $C'_{P_1}$ and $C'_{P_2}$ are not isomorphic.
  As the
  family of cubic fourfolds with two planes intersecting along a line contains the
  family of Eckardt cubic fourfolds, this follows from Corollary \ref{cor:notisoline}.
\end{proof}

While Moschetti's result implies that the associated K3 surfaces are twisted
Fourier--Mukai partners, it turns out that this is not true for the untwisted
surfaces.

\begin{corollary}\label{cor:notderivedeq}
  Let $P_1, P_2 \subset X$ be a cubic fourfold containing two planes intersecting along a
  line, and assume that $X$ is very general with this property.
  Then, the associated K3 surfaces $S_{P_1}$ and $S_{P_2}$ are not derived equivalent.
\end{corollary}
\begin{proof}
  In fact, the K3 surface $S_{P_i}$ has no nontrivial Fourier--Mukai partner,
  see \cite[Cor.\ 5.13]{meinsmashinder}.
\end{proof}

\subsection{A correspondence}

The aim of this section is to construct a geometric correspondence between the associated K3 surfaces $S_{P_1}$ and $S_{P_2}$ in order to prove Theorem \ref{thm:exofcorr}. For the remainder of this section, we assume that $X$ is a very general member of the family of cubic fourfolds containing two planes intersecting along a line.

Let $L = P_1 \cap P_2$ denote the intersection of two of the planes. Recall that the intersection of $X$ with the linear span $\Pi$ of $P_1$ and $P_2$ is the union of three planes $P_1, P_2$ and $P_3$. Note that
the intersection of the divisors $F_{P_1}'$ and $F_{P_2}'$ in the Fano variety
splits into two
components:
$$F_{P_1}' \cap F_{P_2}' = P_3^\vee \cup F_L' \subset F(X),$$
where $P_3^\vee \subset F(X)$ is the locus of lines on the plane $P_3 \subset
  X$ and $F_L' \subset F(X)$ is the closure of the locus of lines intersecting $L$ that are not contained in
$P_1$ or $P_2$.

\begin{proposition}
  \label{lem:existenceofelliptic}
  The residual component $F_L'$ has exactly one irreducible component of dimension two and admits a morphism
  $$ F_L' \setminus \{[L]\} \to L, \;
    L' \mapsto L' \cap L,$$
  the general fiber of which is a smooth elliptic curve.
\end{proposition}

\begin{proof}
  For $x \in X$, consider the scheme $F_x \subset F(X)$ of lines through the point $x$. Note that $F_x$ is a curve unless $x \in X$ is an Eckardt point, cf.\ Section~\ref{sec:eckardt}. As we assume that $X$ is very general in the family of cubic fourfolds containing two planes intersecting along a line, $X$ does not contain an Eckardt point and thus $F_x$ is a curve for all $x \in X$.

  Picking coordinates $x_0, \dots, x_5$ on $\mathbb P^5$ such that $x = [1\, \colon 0 \,\colon 0\, \colon 0\, \colon 0\,  \colon 0]$, we can write the equation of $X$ as
  $$X = V(x_0^2x_5 + x_0q(x_1, \dots, x_5)+c(x_1, \dots, x_5)).$$
  Then, we have an isomorphism
  $$F_x \simeq V(x_5,x_0, q, c),$$
  so that $F_x$ is the complete intersection of a quadric $Q \coloneqq V(x_5, x_0, q)$ and a cubic in $\mathbb P^3$. In other words, we have $F_x \in |\mathcal O_{Q}(3)|$. For $x \in L = P_1  \cap P_2$, the loci of lines through $x$ contained in $P_1$ and $P_2$ yield two components $l_1 \cup l_2 \subset F_x$ which are lines on $\mathbb P^3$. Moreover, $l_1$ and $l_2$ intersect in the point corresponding to the line $[L]$. In the case that $Q$ is irreducible, this implies that we have $l_1 \cup l_2 \in |\mathcal O_Q(1)|$. The residual component $F'_x \subset F_x$ is thus an element of the linear system $|\mathcal O_Q(2)|$. For generic choices of the pair $(X, x)$ with the properties described above, an explicit computation with the help of \cite{sage} shows that $Q$ is irreducible and that $F_x'$ is a smooth curve. In particular, since it is a $(2, 2)$-complete intersection in $\mathbb P^3$, it is an elliptic curve.
\end{proof}

\begin{remark}
  The map $F'_L \setminus \{[L]\} \to L$ does not extend to a well-defined morphism $F_L' \to L$.
  Indeed, in the notation of the proof of the above proposition, there are exactly two points $x \in L$ for which $[L] \in F'_x$: The point $z \coloneqq P_1 \cap P_2 \cap P_3$ and another point $z' \in L$ satisfying  $$\gamma_X(z) = \gamma_X(z'),$$
  where $\gamma_X \colon X \to (\mathbb P^5)^\vee, x \mapsto T_x X \subset \mathbb P^5$ denotes the Gauss map of $X$. It is easy to verify that $[L] \in F_z'$. On the other hand, let $x \in L \setminus P_3$ be a point such that $[L] \in F_x'$. As in the proof of the above proposition, we may pick coordinates $x_0, \dots, x_5$ on $\mathbb P^5$ such that $x = [1\, \colon 0 \,\colon 0\, \colon 0\, \colon 0\,  \colon 0]$ and
  $$X = V(x_0^2x_5 + x_0q(x_1, \dots, x_5) + c(x_1, \dots, x_5)).$$
  Moreover, since $x \not \in P_3$, we can assume that $$P_1 = V(x_2, x_4, x_5), P_2 = V(x_3, x_4, x_5) \text{ and }P_3 = V(x_0, x_4, x_5).$$
  Restricted to $V(x_5)$, we can then write
  $$q|_{x_5 = 0} = x_4 l(x_1, \dots, x_4) + x_2x_3 \text{ and } c|_{x_5 = 0} = x_4 q'(x_1, \dots, x_4).$$
  The residual component $F'_x$ is then cut out by $V(x_0, x_5, q, q') \subset \mathbb P^5.$ Note that $[L]$ corresponds to the point $V(x_0, x_2, x_3, x_4 ,x_5)$. Hence, we have $[L] \in F'_x$ if and only if $q'$ does not contain the monomial $x_1^2$. One then checks that this happens if and only if the tangent space of $X$ at $[0 \, \colon 1 \, \colon 0 \,  \colon 0\, \colon 0\,  \colon 0] = P_1 \cap P_2 \cap P_3$ agrees with the tangent space at $x$, which is $V(x_5) \subset \mathbb P^5$, as claimed. Since the restriction of the Gauss map $\gamma_{X|L} \colon L \to \mathbb P^1$ is of degree two, this uniquely determines the point $z' \in L \setminus \{z\}$ with $[L] \in F'_{z'}$. A computation shows that, for general $X$ as above, the Gauss map is not ramified at $z$ and thus we have $z \neq z'$.
\end{remark}

However, the minimal model of $F_L'$ admits a well-defined elliptic fibration:

\begin{proposition}
  \label{prop:descfl}
  The projection $F'_{P_i} \to S'_{P_i}$ induces a generically finite morphism
  $F_L' \to S'_{P_i}$ of degree two.
  The minimal model $F_L$ of $F'_L$ sits in the Cartesian square
  $$
    \begin{tikzcd}
      F_L \arrow[r, "2:1"] \arrow[d]    & S_{P_i} \arrow[d] \\
      L \arrow[r, "\gamma_X|_L"] & \mathbb P^1,
    \end{tikzcd}$$
  where $\gamma_X|_L$ is the Gauss map of $X$ restricted to $L$.
  The vertical
  maps are elliptic fibrations.
\end{proposition}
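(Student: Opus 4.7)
The plan is to describe the morphism $F'_L \to S'_{P_i}$ obtained by composition with the factor projection $F'_{P_i} \to S'_{P_i}$, verify the degree and commutativity assertions, and then identify $F_L$ with the fiber product $S_{P_i} \times_{\mathbb P^1} L$ via a birational comparison.

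For the degree two assertion, I would count preimages of a generic ruling on a residual quadric $Q \subset X$. The fiber consists of the lines of this ruling that meet $L$. Since $L \subset P_i$ and $Q$ lies in $\overline{P_i \cup L'}$, we have $L \cap Q = L \cap (P_i \cap Q)$; the conic $P_i \cap Q \subset P_i$ meets $L$ in two points transversally for generic $Q$, and through each of these points passes a unique line of the chosen ruling.

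For commutativity of the outer rectangle, I would show that for $L' \in F'_L$ with $x = L' \cap L$, the image in $\mathbb P^1$ under the composition $F'_L \to S'_{P_i} \to \mathbb P^1$ equals $T_x X$. By the description of the elliptic fibration this image is the hyperplane $\overline{Q \cup \Pi} = \overline{\Pi \cup L'}$ (using $Q \subset \overline{P_i \cup L'} \subset \overline{\Pi \cup L'}$). On the other hand, $T_x X$ contains both $\Pi$ (since $x \in P_1 \cap P_2$ forces $T_x X \supset P_1, P_2$) and $L'$; a dimension count then gives $T_x X = \overline{\Pi \cup L'}$, identifying the image with the Gauss map $\gamma_X(x)$.

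Finally, this commutativity yields a morphism $F'_L \to S'_{P_i} \times_{\mathbb P^1} L$, and since both sides are irreducible surfaces generically of degree two over $S'_{P_i}$, it is birational. After passing to the minimal smooth model $F_L$ on the left and to the corresponding smooth model of the fiber product on the right, the identification should extend to an isomorphism realizing the Cartesian square, from which $F_L \to L$ becomes an elliptic fibration by base change, matching Lemma \ref{lem:existenceofelliptic}. The main obstacle will lie in this last step: one must control the degenerate fibers, verifying that the branch loci of the two double covers $F_L \to S_{P_i}$ and $S_{P_i} \times_{\mathbb P^1} L \to S_{P_i}$ both coincide with the preimage under $S_{P_i} \to \mathbb P^1$ of the branch locus of the restricted Gauss map $L \to \mathbb P^1$, and that the birational identification respects the corresponding resolutions.
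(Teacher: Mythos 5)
Your plan follows essentially the same route as the paper's proof: the same count of the two lines of a ruling through the length-two intersection $L \cap Q$ (the paper additionally invokes Lemma \ref{lem:mapinjective} to rule out $L \subset Q$), the same identification of $T_{L\cap L'}X$ with $\overline{\Pi \cup L'}$ to get commutativity over the Gauss map, and the same birational comparison with the fiber product. The ``main obstacle'' you flag at the end is closed in the paper by noting that the fiber product is the cyclic double cover of $S_{P_i}$ branched along the two smooth elliptic fibers over the branch points of $\gamma_X|_L$, which is itself a smooth minimal surface by \cite[Thm.\ 3.1]{garbagnatidoublecovers} and hence coincides with the minimal model $F_L$ by uniqueness of minimal models in nonnegative Kodaira dimension.
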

\begin{proof}
  By symmetry, it is enough to consider $F'_L \to S_{P_1}'$.

  The preimage of the singular point $\Sing(S_{P_1}')$ of $S_{P_1}'$ under the
  map $F'_L \to S_{P_1}'$ is given by the locus $(P_2^\vee \cup P_3^\vee) \cap
    F_L'$ of lines on $P_2$ or $P_3$. We claim that the restriction
  $$F'_L \setminus (P_2^\vee \cup P_3^\vee) \to S_{P_1}' \setminus
    \Sing(S_{P_1}')$$
  is a finite double cover. As it is proper, it suffices to show that the
  restriction is quasi-finite. The map $F'_L \to S'_{P_1}$ is given by mapping
  a line $L'$ intersecting $L$ to its ruling on the residual quadric surface $Q
    \subset X$. Note that the residual quadric surface is uniquely determined by
  Lemma \ref{lem:mapinjective}. From now on, restrict to $S_{P_1}' \setminus
    \Sing(S_{P_1}')$. Again by Lemma \ref{lem:mapinjective}, the quadric surface $Q$
  does not contain $L$ and thus the intersection $L \cap Q$ is of length two. As $Q$
  is either a smooth quadric surface or a quadric cone, a fixed ruling of $Q$ contains at most two
  lines whose union contains the scheme-theoretic intersection $Q \cap L$.
  We conclude that the morphism $F_L' \setminus (P_2^\vee \cup
    P_3^\vee) \to S_{P_1}' \setminus \Sing(S_{P_1}')$ is finite of
  degree two. Ramification occurs precisely when $L$ intersects $Q$ in a single point.

  Recall that the Gauss map $\gamma_X \colon X \to |\mathcal O_{\mathbb P^5}(1)|$ is the map
  sending a point $x \in X$ to the tangent space $[T_xX] \in (\mathbb P^5)^\vee$.
  Restricting to the line $L \subset X$, one obtains a double cover
  $$\gamma_{X|L} \colon L \to \Pi^\vee \cong \mathbb P^1,$$
  where $\Pi \subset \mathbb P^5$ is the three-dimensional linear subspace spanned by the planes $P_1, P_2$ and $P_3$.
  One easily checks that for $L' \not \in P_2^\vee \cup P_3^\vee$, the tangent hyperplane $T_{L \cap L'} X$ is spanned by the quadric $Q$ and the two planes $P_2$ and $P_3$.

  Combining the above, it follows that the diagram
  $$
    \begin{tikzcd}
      F_L' \setminus (P_2^\vee \cup P_3^\vee) \arrow[r] \arrow[d]    & S_{P_i}' \setminus
      \Sing(S_{P_i}') \arrow[d] \\
      L \arrow[r, "\gamma_X|_L"] & \mathbb P^1
    \end{tikzcd}
  $$
  is a Cartesian square. Note that both vertical maps are elliptic fibrations.

  Therefore, $F'_L \to S'_{P_i}$ is birational to the cyclic double cover of
  $S_{P_i}$ branched along the union of the two fibers over the branch locus of the Gauss map. An explicit computation shows that, under the genericity assumptions on $X$, these two fibers are smooth elliptic curves.
  As such a double cover is a minimal surface, see, e.g., \cite[Thm.\ 3.1]{garbagnatidoublecovers}, we conclude.
\end{proof}

All in all, we obtain the following commutative diagram:
$$
  \begin{tikzcd}
    S_{P_1} \arrow[rdd, "\text{ell.\ fib.}"'] & F_L \arrow[d, "\text{ell.\ fib.}"
      description] \arrow[r, "2:1"] \arrow[l, "2:1"'] & S_{P_2} \arrow[ldd,
      "\text{ell.\ fib.}"] \\
    & L \arrow[d, "2:1" description]                                                   &                                         \\
    &\, \mathbb P^1   ,  &
  \end{tikzcd}
$$
where the map $L \to \mathbb P^1$ is the Gauss map of $X$ restricted to
$L$. In other words, there are two involutions $\iota_1, \iota_2 \in \Aut(F_L)$ with $S_{P_i} = F_L / \iota_i$ that lift the covering involution of the Gauss map $L \to \mathbb P^1$ and thus identify pairs of fibers of the elliptic fibration $F_L \to L$ in two different ways. Note that the fixed point loci of $\iota_1$ and $\iota_2$, which are the ramificattion loci of the double covers $F_L \to S_{P_i}$, coincide as they are the fibers over the ramification locus of the Gauss map $L \to \mathbb P^1$ and thus coincide for $\iota_1$ and $\iota_2$.

\begin{proposition}
  The double cover $F_L \to S_{P_i}$ splits the Brauer class $\alpha_{P_i}$,
  i.e., $f_i^* \alpha_{P_i}$ is the trivial Brauer class in $\Br(F_L)$.
\end{proposition}

\begin{proof}
  Recall that $\alpha_{P_i}$ is represented by the Brauer--Severi variety
  $F_{P_i} \to S_{P_i}$. By construction, the map $f_i \colon F_L \to S_{P_i}$
  factors (on an open subset) via a generically injective map $F_L \to
    F_{P_i}$. In particular, the base change of the Brauer--Severi variety $F_{P_i} \to S_{P_i}$ to $F_L$
  admits a rational section. It follows that the Brauer class $f_i^*
    \alpha_{P_i}$ is trivial.
\end{proof}

\begin{corollary}
  \label{cor:commonjac}
  Let $J(S_{P_i}/ \mathbb P^1) \to \mathbb P^1$ denote the relative Jacobian
  of the elliptic fibration on $S_{P_i}$.
  Then, we have isomorphisms
  $$J(S_{P_1} / \mathbb P^1) \cong J(S_{P_2} / \mathbb P^1) \cong J(S_{P_3} /
    \mathbb P^1)$$
  over $\mathbb P^1$, respecting the sections of the elliptic fibrations.
\end{corollary}
\begin{proof}
  By Remark \ref{rem:basesid}, the bases of the elliptic fibrations are naturally
  identified with the space of hyperplanes in $\mathbb P^5$ containing $\Pi$.
  As the restriction of the Gauss map to $L = P_1 \cap P_2$ is
  independent of the chosen plane $P_1$ and $P_2$, the elliptic fibrations $S_{P_1} \to
    \mathbb P^1 \leftarrow S_{P_2}$ are fiberwise isomorphic by Proposition
  \ref{prop:descfl}. By construction, the same holds for the associated Jacobian K3 surfaces $J(S_{P_i} / \mathbb P^1) \to \mathbb P^1$. In particular, the discriminant of the elliptic K3 surfaces $J(S_{P_i}) \to \mathbb P^1$, i.e., the locus of points on $\mathbb P^1$ over which the elliptic fibrations are singular, are naturally identified. Since the elliptic K3 surfaces $S_{P_1}$ and $S_{P_2}$ are very general in their corresponding moduli space of lattice polarized K3 surfaces by the discussion in Section~\ref{sec:theassocK3}, the claim follows by
  \cite[Thm.\ 1.1]{reconstructionellipticK3}.
\end{proof}

\subsection{The ramification curves}

\label{sec:recillas}

Before continuing the discussion of the relation between the two associated K3
surfaces, let us digress on a geometric relation between the branch
curves $C_{P_i} \subset \Bl_{\Sing(C'_{P_i})} \mathbb P^2$.
In fact, they are related by Recillas' trigonal construction:
Let $X \to \mathbb P^1$ be a tetragonal curve of genus $g-1$. Let $\tilde{C}
  \coloneqq S^2_{\mathbb P^1} X \to \mathbb P^1$ be the relative second symmetric
product. As $X \to \mathbb P^1$ is of degree four, there is a natural involution on
$\tilde{C}$ given by swapping residual pairs. Let $C$ denote the quotient of
$\tilde{C}$ by this involution.
\begin{theorem}[{\cite{recillas}, \cite[Sec.\ 2]{donagisfibersprym}}]
  \label{thm:rec}
  The above construction yields a bijection
  $$\left\{
    \begin{array}{c}\text{Tetragonal curves } \\ X\text{ of genus }g-1
    \end{array}\right\} \congpf
    \left\{
    \begin{array}{c}\text{ Trigonal curves }C\text{ of} \\ \text{genus }g\text{ with a double
        cover }\tilde{C}
    \end{array}\right \}$$
\end{theorem}

In our situation, the composition $C_{P_i} \subset S_{P_i} \to \mathbb P^1$ is
of degree four. Hence, the $C_{P_i}$ are tetragonal curves, whose genus is nine as they are normalizations of plane sextics with exactly one node. Let $C \subset S$
denote the curve of nontrivial two-torsion points in the Jacobian elliptic K3
surface $S \coloneqq J(S_{P_i} / \mathbb P^1)$, which coincide for $S_{P_1}$ and $S_{P_2}$ by Corollary \ref{cor:commonjac}. The composition $C \subset S \to \mathbb P^1$ is of degree three. By \cite[Thm.\ 7.6]{vangeemenremarks}, the curve $C$ is of genus ten and there is an isomorphism
$$J(C)[2] \cong \Br(S)[2].$$
In particular, there is an \'etale double cover $\tilde{C}_i \to C$ corresponding to the class
$$[S_{P_i}] \in \Sha(S)[2] \cong \Br(S)[2] \cong J(C)[2] \cong \{\text{\'etale double covers of } C\} / \text{isomorphism}.$$
\begin{proposition}
  Via Recillas' trigonal construction, i.e., Theorem \ref{thm:rec}, with $g = 10$, the tetragonal curve $C_{P_i}$
  corresponds to the double cover $\tilde{C}_i \to C$.
\end{proposition}
\begin{proof}
  This is explained in \cite[Sec.\ 8.6]{vangeemenremarks}.
\end{proof}

In particular, the tetragonal curves $C_{P_1}$ and $C_{P_2}$ are both related to the same trigonal curve $C$ via Recillas' trigonal construction.

\subsection{Invariants of the surface \texorpdfstring{$F_L$}{FL}}

Let us conclude this section by computing invariants of the surface $F_L$. As before, we assume that $X$ is a very general member of the family of cubic fourfolds containing two planes intersecting along a line.

\begin{proposition}
  \label{prop:invariants}
  The surface $F_L$ is of Kodaira dimension one and satisfies
  $$q(F_L) = 0 \text{ and } p_g(F_L)= 3.$$
\end{proposition}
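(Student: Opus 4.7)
The plan is to exploit the description from Proposition \ref{prop:descfl} of $F_L$ as the smooth minimal double cover $\phi \colon F_L \to S_{P_i}$ branched along $D = E_1 + E_2 \subset S_{P_i}$, where $E_1, E_2$ are the two fibers of the elliptic fibration $S_{P_i} \to \mathbb P^1$ lying over the branch locus of $\gamma_X|_L$. For very general $X$, these two branch points avoid the discriminant of the elliptic fibration, so $E_1, E_2$ are smooth elliptic curves and $\phi$ is a smooth double cover; this is the only place where the genericity of $X$ enters. Writing $f \in \Pic(S_{P_i})$ for the class of a fiber, the relation $D \sim 2f$ identifies the line bundle defining the cover as $\mathcal L = \mathcal O_{S_{P_i}}(f)$, so that
$$\phi_* \mathcal O_{F_L} = \mathcal O_{S_{P_i}} \oplus \mathcal O_{S_{P_i}}(-f), \quad K_{F_L} = \phi^*\bigl(K_{S_{P_i}} \otimes \mathcal L\bigr) = \phi^* \mathcal O_{S_{P_i}}(f),$$
where the last equality uses $K_{S_{P_i}} = 0$.

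From here I would read off the invariants. On the K3 surface $S_{P_i}$, the pencil $|f|$ cuts out the elliptic fibration, hence $h^0(\mathcal O(f)) = 2$, and $h^1(\mathcal O(\pm f)) = 0$ follows from Riemann--Roch together with Serre duality and $h^0(\mathcal O(-f)) = 0$. Combining this with the projection formula $\phi_* \phi^* \mathcal O(f) = \mathcal O(f) \oplus \mathcal O_{S_{P_i}}$ yields
\begin{align*}
p_g(F_L) &= h^0(\mathcal O_{S_{P_i}}(f)) + h^0(\mathcal O_{S_{P_i}}) = 3,\\
q(F_L) &= h^1(\mathcal O_{S_{P_i}}) + h^1(\mathcal O_{S_{P_i}}(-f)) = 0.
\end{align*}

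For the Kodaira dimension, $K_{F_L}^2 = 2\, f^2 = 0$ immediately excludes $\kappa(F_L) = 2$. Since $F_L$ is minimal and $p_g(F_L) = 3 > 1$, the Enriques--Kodaira classification rules out both $\kappa = -\infty$ (where $p_g = 0$) and $\kappa = 0$ (where $p_g \leq 1$ for all minimal surface classes of Kodaira dimension zero), forcing $\kappa(F_L) = 1$; equivalently, $|K_{F_L}|$ factors through the composition $F_L \to L \to \mathbb P^1$, exhibiting $F_L$ as a properly elliptic surface with this elliptic fibration as its Iitaka fibration. The computation is essentially bookkeeping, and the only subtle input is the smoothness of the branch fibers $E_1, E_2$, which follows from genericity.
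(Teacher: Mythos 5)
Your proof is correct and follows the same route as the paper, which simply cites Garbagnati's result on double covers of elliptic K3 surfaces branched along smooth fibers (\cite[Thm.\ 3.1]{garbagnatidoublecovers}); you have written out the standard cyclic-cover computation ($\phi_*\mathcal O_{F_L} = \mathcal O \oplus \mathcal O(-f)$, $K_{F_L} = \phi^*\mathcal O(f)$) that this citation encapsulates. Your identification of the smoothness of the two branch fibers as the only genericity input matches the hypotheses of the cited theorem, so the argument is complete.
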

\begin{proof}
  This follows immediately from the description of $F_L$ as the double cover of an
  elliptic K3 surface branched along the disjoint union of two smooth fibers, cf.\ the proof of Proposition~\ref{prop:descfl}.
  See \cite[Thm.\ 3.1]{garbagnatidoublecovers}.
\end{proof}

\subsection{Lines on a cubic fourfold intersecting a given line}
\label{sec:involutions}
While the remainder of this note only relies on the geometry of lines intersecting the special line $L = P_1 \cap P_2$, we briefly discuss the geometry of the locus $\mathfrak F_L \subset F(X)$ of lines intersecting an arbitrary line $L \subset X$ on a cubic fourfold $X$. Note that we have
$$\mathfrak F_L = F_L' \cup P_1^\vee \cup P_2^\vee,$$
in the special case that the line $L$ is the intersection of two planes $P_1$ and $P_2$ on $X$ as above.
On the other hand, the case of generic lines $L \subset X$ on a smooth cubic fourfold $X$ has been studied by Huybrechts in
\cite{huybrechtslines}. For $[L'] \in  \mathfrak F_L$ such that the plane $\overline{L L'}$ is not contained in $X$, there is
a residual line $L'' \subset X$ in the intersection
$$\overline{L L'} \cap X = L \cup L' \cup L''.$$
The map $L' \mapsto L''$ yields a (rational) involution $\iota_L$ on $\mathfrak
  F_L$. The quotient $\mathfrak F_L \to \mathfrak F_L / \iota_L \cong \mathfrak D_L \subset \mathbb P^3$ is realized by
viewing $\mathfrak F_L$ as the relative Fano variety of lines over the
discriminant locus $\mathfrak D_L \subset \mathbb P^3$ of the conic
fibration $\Bl_L X \to \mathbb P^3$. For general $L \subset X$, the discriminant
locus $\mathfrak D_L \subset \mathbb P^3$ is a nodal quintic
surface with $p_g(\mathfrak D_L) = 4$, whereas $\mathfrak F_L$ is a smooth irreducible surface with $p_g(\mathfrak F_L) = 5$, see \cite[Thm.\ 0.1, Thm.\ 0.2]{huybrechtslines}. Thus,
the action of $\iota_L$ on $H^{2, 0}(\mathfrak F_L)$ decomposes as the direct sum
of a one-dimensional $\iota_L$-anti-invariant subspace and a four-dimensional $\iota_L$-invariant subspace.
Moreover, Huybrechts establishes a Hodge isometry
$$(H^4(X, \mathbb Z)_{\mathrm{pr}},-(\,.\;)) \cong (H^2(\mathfrak F_L, \mathbb
  Z)_{\mathrm{pr}}^-, (1/2)(\,.\;))$$
for general $L \subset X$, see \cite[Thm.\ 0.2]{huybrechtslines}.
On the right-hand side, $H^2(\mathfrak F_L, \mathbb Z)_{\mathrm{pr}}$ denotes
the primitive part with respect to the restriction of the Plücker embedding and
$H^2(\mathfrak F_L, \mathbb Z)^-_{\mathrm{ pr}} \subset H^2(\mathfrak F_L, \mathbb
  Z)_{\mathrm{pr}}$ is its $\iota_L$-anti-invariant part.

From now on, assume that $X$ contains a plane $P \subset X$ and that $L \subset P$. Then, we have
$$\mathfrak F_L = \mathfrak F_{L, P} \cup P^\vee \subset F(X),$$
where $\mathfrak F_{L, P}$ is the closure of the locus of lines intersecting $L$
which are not contained in $P$.

\begin{figure}[ht!]
  \centering
  \begin{tikzpicture}
  \filldraw[black, dashed, thick] (1.7,-1) -- (4, -3.0) node[below, node
  font=\tiny] {$(\iota_{L, P} \circ \iota_L)(L')$};
  \filldraw[black, dashed, thick] (4.34,-1) -- (2, -3.0) node[below, node
  font=\tiny] {$\iota_{L, P}(L')$};
  \draw[gray] (0, 0) -- (2, -2) -- (6, -2) -- (4, 0) -- cycle;
  \draw[gray] (1, -1) -- (5, -1);
  \draw[gray] (2.2, -2) .. controls (0.6, 0.5) and (2.5, 0.5) .. (5.8, -2);
  \filldraw[black] (1.7,-1) circle (1pt);
  \filldraw[black] (4.34,-1) circle (1pt);
  \filldraw[black, thick] (1.7,-1) -- (4, 1.0) node[above, node font=\tiny] {$L'$};
  \filldraw[black, thick] (4.34,-1) -- (2, 1.0) node[above, node font=\tiny] {$\iota_{L}(L')$};

  \filldraw[black] (3.015,0.14) circle (1pt);
  \filldraw[black] (3.015,-2.14) circle (1pt);

  \node[node font=\tiny] at (0.5, -0.2) (P) {$P$};
  \node[node font=\tiny] at (4.8, -1.7) (PQ) {$P \cap Q$};
  \node[node font=\tiny] at (3, -1.15) (L) {$L$};
\end{tikzpicture}
  \caption{The (rational) involutions on \texorpdfstring{$\mathfrak F_{L, P}$}{FLP}}
  \label{fig:sketchinvolution}
\end{figure}

Since $L \subset P$, we can (rationally) embed $\mathfrak F_{L, P}$ into the Brauer--Severi variety $F_P'$ over the (singular) K3 surface $S_P'$. By the same arguments as in the proof of Proposition \ref{prop:descfl}, the composition
$$\mathfrak F_{L, P} \dashrightarrow F_P' \to S_P'$$
is generically finite of degree two: Indeed, a point $[(\Pi, R)] \in S_P'$ corresponds to a linear threespace $\Pi \subset \mathbb P^5$ containing $P$ and a
ruling $R$ on the residual quadric surface $Q$ in $\Pi \cap X = P \cup Q$. If $L$ is not contained in $Q$, then the intersection $Q \cap L$ is of length two,
and the two lines in the ruling $R$ containing $Q \cap L$ are the only lines in $\mathfrak F_{L, P}$ mapping to $(\Pi, R)$.

Let $\iota_{L, P}$ denote the (rational) covering involution of $\mathfrak F_{L, P} \to S_P'$.
One easily checks that the involutions $\iota_{L, P}$ and $\iota_L$ commute. The geometry of the situation is depicted in Figure \ref{fig:sketchinvolution}.

Specializing to the case of $L = P_1 \cap P_2$, we have
$$\mathfrak F_L = F_L' \cup P_1^\vee \cup P_2^\vee \text{ and } \mathfrak F_{L, P_1} = F_{L}' \cup P_2^\vee.$$
By the construction above, we obtain the two involutions $\iota_{L, P_1}, \iota_{L, P_2}$ and the rational involution $\iota_L$ on the surface $F_L$.
As argued above, both $\iota_{L, P_1}$ and $\iota_{L, P_2}$ commute with $\iota_L$ and $F_L / \iota_{L, P_i} \cong S_{P_i}$.
One can also consider the quotient $F_L / \iota_L$: As above, there is a rational map $F_L \dashrightarrow \mathbb P^3, L' \mapsto \overline{LL'}$,
which (birationally) realizes the quotient $F_L / \iota_L$ as the discriminant quintic surface $\mathfrak D_L \subset \mathbb P^3$ of the conic bundle $\Bl_L X \to \mathbb P^3$.
Let $D_L$ denote a resolution of singularities of $\mathfrak D_L$.

\begin{lemma}\label{lem:pgdl}
  Let $X \subset \mathbb P^5$ be a general member of the family of cubic fourfolds containing two planes $P_1, P_2 \subset X$ intersecting along a line $L$. Then, we have $$p_g(D_L) = 2$$
\end{lemma}

\begin{proof}
  Pick coordinates on $\mathbb P^5$ such that $$P_1 = V(x_2, x_4, x_5), P_2 = V(x_3, x_4, x_5) \text{ and } P_3 = V(x_1, x_4, x_5).$$ Then, the discriminant surface $\mathfrak D_L \subset \mathbb P^3$ is cut out by the determinant of a matrix of the form
  $$M \coloneqq \begin{pmatrix}
      l_{00}(y_4, y_5)        & l_{01}(y_4, y_5)                 & q_0(y_2, y_3, y_4, y_5)          \\
      l_{01}(y_4, y_5)        & l_{11}(y_4, y_5)                 & q_1(y_2, y_3, y_4, y_5) + y_2y_3 \\
      q_0(y_2, y_3, y_4, y_5) & q_1(y_2, y_3, y_4, y_5) + y_2y_3 & f(y_2, y_3, y_4, y_5)
    \end{pmatrix},$$
  where $\deg l_{ij} = 1$, $\deg q_i = 2$, $\deg f = 3$ and $$q_0(y_2, y_3, 0, 0) = q_1(y_2, y_3, 0, 0) = f(y_2, y_3, 0, 0) = 0.$$
  The singularities of $\mathfrak D_L$ can be described as follows: For $i = 1,2$, let $p_i \in \mathbb P^3$ denote the point corresponding to the plane $P_i$. A local computation shows that, under our genericity assumption, the points $p_i \in \mathfrak D_L$ are ordinary singularities of multiplicity three. Note that we have $M(p_1) = M(p_2) = 0.$

  We claim that there are exactly eight points $p_3, \dots, p_{10}$ such that $M(p_i)$ has rank $1$: In this case, the matrix
  $$N \coloneqq \begin{pmatrix} l_{00}(y_4, y_5) & l_{01}(y_4, y_5) \\ l_{01}(y_4, y_5) & l_{11}(y_4, y_5) \end{pmatrix}$$
  has rank at most $1$. The vanishing locus of $\det N$ is the union of two planes $P'_1, P'_2 \subset \mathbb P^3$. Then, there exist $[a \colon b] \in \mathbb P^1$ such that after restricting to $P'_1$ we have $a l_{00} = b l_{01}$ and $a l_{01} = b l_{11}$. The locus of points on $P'_1$ such that $M$ is of rank at most $1$ is then cut out by the two equations
  $$aq_0 = b(q_1+y_2y_3) \text{ and } q_0^2 = fl_{00}.$$
  Counted with multiplicities, we obtain eight points
  $$2p_1, 2p_2, p_3, p_4, p_5, p_6,$$
  over which $M$ has rank at most $1$. The same argument applied to $P_2'$ yields four more points $p_7, \dots, p_{10}$ over which $M$ has rank $1$. A local computation shows that $p_3, \dots, p_{10} \in \mathfrak D_L$ are ordinary double points. By explicit computation, e.g., using \cite{sage}, one checks that, under our genericity assumptions, the singular locus of the surface $\mathfrak D_L$ coincides with $\{p_1, \dots, p_{10}\}$.

  Since the singularities are ordinary, we conclude
  \[p_g(D_L) = p_g(\mathfrak D_L) - \sum_{x \in \Sing(\mathfrak D_L)} (\operatorname{mult}_x(\mathfrak D_L) -2)  = 4 - 2 = 2 \qedhere \]
\end{proof}
The fact that $$p_g(F_L) = 3 = p_g(D_L) + p_g(S_{P_i})$$ will be revisited in Remark \ref{rem:pgdecomp}.

\section{Transcendental lattices}
\label{sec:transcendental}

The aim of this section is to understand the action of the correspondence described in the previous section on the transcendental
cohomology of the associated K3 surfaces.
As before, let $X$ be a cubic fourfold containing two planes $P_1, P_2 \subset X$ intersecting along a line $L = P_1 \cap P_2$, and assume that $X$ is very general with this property.
As discussed in the previous section, we obtain two double covers
$f_i \colon F_L \to S_{P_i}$. Recall that $p_g(F_L) = h^{2, 0}(F_L) = 3$, see Proposition \ref{prop:invariants}. The observation that $f_1$
and $f_2$ have identical ramification loci immediately yields the following:

\begin{proposition}
  The pullbacks of the holomorphic two-forms on the K3 surfaces $S_{P_i}$ to $F_L$ coincide, i.e., we have
  $f_1^* H^{2, 0}(S_{P_1}) = f_2^* H^{2, 0}(S_{P_2}) \subset H^{2,
        0}(F_L)$.
\end{proposition}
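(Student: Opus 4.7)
The plan is to identify both $f_1^* H^{2,0}(S_{P_1})$ and $f_2^* H^{2,0}(S_{P_2})$ with the same one-dimensional subspace of $H^{2,0}(F_L)$, namely $H^0(F_L, \Omega^2_{F_L}(-\mathbb E))$, where $\mathbb E \subset F_L$ denotes the common ramification divisor of the two double covers $f_1, f_2$. The conclusion then follows from a dimension count.

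For the inclusion $f_i^* H^{2,0}(S_{P_i}) \subset H^0(F_L, \Omega^2_{F_L}(-\mathbb E))$, I work in local analytic coordinates $(s,t)$ near a point of $\mathbb E$ in which $f_i$ takes the form $(s,t) \mapsto (s, t^2)$. A holomorphic $2$-form $g(s,u)\, ds \wedge du$ on $S_{P_i}$ then pulls back to $2t\, g(s,t^2)\, ds \wedge dt$, which vanishes along $\{t=0\} = \mathbb E$.

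To compute the dimension of the target, I apply the standard ramification formula to the double cover $f_i$ branched along the smooth divisor $\mathbb E$. Since $S_{P_i}$ is a K3 surface, $K_{S_{P_i}} = 0$, and therefore
$$K_{F_L} = f_i^* K_{S_{P_i}} + \mathbb E = \mathbb E.$$
Consequently,
$$H^0(F_L, \Omega^2_{F_L}(-\mathbb E)) = H^0(F_L, \mathcal O_{F_L}(K_{F_L} - \mathbb E)) = H^0(F_L, \mathcal O_{F_L}) \cong \mathbb C.$$

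Finally, both subspaces $f_i^* H^{2,0}(S_{P_i})$ are one-dimensional, since $p_g(S_{P_i}) = 1$ and $f_i^*$ is injective on holomorphic $2$-forms (as $f_i$ is surjective). Being one-dimensional subspaces of the one-dimensional space $H^0(F_L, \Omega^2_{F_L}(-\mathbb E))$, they must coincide. There is no real obstacle in this argument; the essential input is the observation preceding the proposition that $\mathrm{Fix}(\iota_1) = \mathrm{Fix}(\iota_2) = \mathbb E$, so that the same ramification divisor governs both ramification formulas. As a sanity check, writing $\mathbb E = E_a + E_b$ as a sum of two fibers of $F_L \to L$, one recovers $p_g(F_L) = h^0(\mathbb P^1, \mathcal O(2)) = 3$, in agreement with Proposition \ref{prop:invariants}.
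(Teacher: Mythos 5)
Your proof is correct and is in essence the paper's own argument: the paper notes that $V(f_1^*\omega_1)=\Branch(f_1)=\Branch(f_2)=V(f_2^*\omega_2)$ as members of $|\omega_{F_L}|$ and concludes, which is exactly your observation that both pullbacks land in the one-dimensional space $H^0(F_L,\Omega^2_{F_L}(-\mathbb E))$. Your version just makes explicit the local computation of the vanishing along the ramification divisor and the adjunction/ramification formula $K_{F_L}=\mathbb E$ that the paper leaves implicit.
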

\begin{proof}
  Let $\omega_j \in H^{2, 0}(S_{P_j})$ be a symplectic form. As the ramification loci of
  $f_1$ and $f_2$ coincide by the description in Proposition \ref{prop:descfl}, we have
  $$V(f_1^* \omega_1) = \Ram(f_1) = \Ram(f_2) = V(f_2^* \omega_2) \in
    |\omega_{F_L}|.$$
  The claim follows.
\end{proof}

As a consequence, the correspondence induces a rational Hodge isometry between
the transcendental parts of the rational cohomology groups of $S_{P_1}$ and
$S_{P_2}$.

\begin{corollary}
  \label{cor:rationalhodge}
  The pullbacks of the rational transcendental components of $$H^2(S_{P_1}, \mathbb
    Q) \text{ and }H^2(S_{P_2}, \mathbb Q)$$ to $F_L$ coincide, i.e., $$f_1^* T(S_{P_1}) \otimes \mathbb Q =
    f_2^* T(S_{P_2}) \otimes \mathbb Q \subset H^2(F_L, \mathbb Q).$$
  In particular, the correspondence $S_{P_1} \leftarrow F_L \rightarrow S_{P_2}$ induces a rational Hodge isometry
  $$f_{2, *} \circ f_1^* \colon T(S_{P_1}) \otimes \mathbb Q \congpf T(S_{P_2}) \otimes \mathbb Q.$$
\end{corollary}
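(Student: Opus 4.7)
The plan is to deduce both assertions from the preceding proposition, which establishes $f_1^* H^{2,0}(S_{P_1}) = f_2^* H^{2,0}(S_{P_2})$, by combining it with the irreducibility of the rational transcendental Hodge structures $T(S_{P_i}) \otimes \mathbb Q$. The first assertion is the substantive step; the ``in particular'' portion then follows formally from the projection formula.

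For the equality of pullbacks, I would first invoke that for very general $X$, each associated K3 surface $S_{P_i}$ has Picard rank exactly $2$ (see Lemma \ref{lem:descpic}) and no additional Hodge-theoretic endomorphisms; hence $T(S_{P_i}) \otimes \mathbb Q$ is an irreducible rational sub-Hodge structure of $H^2(S_{P_i}, \mathbb Q)$. Next, since $f_i \colon F_L \to S_{P_i}$ is a finite surjective morphism of degree $2$ between smooth surfaces, one has $f_{i,*} \circ f_i^* = 2 \cdot \mathrm{id}$, so $f_i^*$ is injective on rational cohomology. Consequently, $f_i^* T(S_{P_i}) \otimes \mathbb Q$ is an irreducible rational sub-Hodge structure of $H^2(F_L, \mathbb Q)$. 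By the preceding proposition, both $f_1^* T(S_{P_1}) \otimes \mathbb Q$ and $f_2^* T(S_{P_2}) \otimes \mathbb Q$ contain the common nonzero element $f_1^* \omega_1 = f_2^* \omega_2$. Their intersection is therefore a nonzero sub-Hodge structure of each; by irreducibility it must equal both of them, yielding the desired equality.

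For the ``in particular'' portion, I note that the composition $\phi \coloneqq f_{2,*} \circ f_1^*$ is a morphism of rational Hodge structures. The equality just established, together with the relation $f_{2,*} \circ f_2^* = 2 \cdot \mathrm{id}$, shows that $\phi$ factors through $T(S_{P_2}) \otimes \mathbb Q$ and is nonzero; by irreducibility and a dimension count it is an isomorphism of rational Hodge structures. Compatibility with the intersection pairings (up to a rational scalar, in the same vein as Theorem \ref{thm:intersectionistx}) follows from the projection formula combined with the identity $f_i^*(\alpha) \cdot f_i^*(\beta) = 2\, \alpha \cdot \beta$ on $H^2(S_{P_i}, \mathbb Q)$.

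The main obstacle is really just the irreducibility of $T(S_{P_i}) \otimes \mathbb Q$; once this is in hand (which is essentially automatic from the very general position of $X$, but should be recorded carefully), the rest of the argument is a formal manipulation with sub-Hodge structures.
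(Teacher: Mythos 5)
Your argument is correct and is essentially the paper's own proof: both rely on the irreducibility of the rational transcendental Hodge structure together with the fact that $f_1^*\omega_1 = f_2^*\omega_2$ forces the two pullbacks to share a nonzero (hence full) sub-Hodge structure. One small remark: the irreducibility of $T(S)\otimes\mathbb Q$ is automatic for \emph{every} K3 surface (any sub-Hodge structure not containing $\omega$ is orthogonal to it, hence algebraic, hence zero in the transcendental part), so no appeal to very generality or to the endomorphism algebra is needed for that step.
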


\begin{proof}
  By \cite[Lem.\ 3.2.7]{lecK3}, both $f_1^* T(S_{P_1}) \otimes \mathbb Q$ and $f_2^* T(S_{P_2}) \otimes \mathbb Q$ are irreducible rational Hodge structures generated by $f_1^* H^{2, 0}(S_{P_1}) = f_2^* H^{2, 0}(S_{P_2})$. The claim follows.
\end{proof}

\begin{remark}
  In fact, one can show that
  $$f_1^*H^2(S_{P_1}, \mathbb Q) \cap f_2^* H^2(S_{P_2}, \mathbb Q) =  \mathbb Q \langle f
    \rangle \oplus
    f_{i}^*T(S_{P_{i}}) \otimes \mathbb Q,$$
  where $f \in \NS(F_L)$ is the class of a fibre of the elliptic fibration on
  $F_L$.
\end{remark}

Next, we consider the integral structure. First, note that Corollary \ref{cor:rationalhodge} can not be upgraded to an integral Hodge isometry between the transcendental lattices of $S_{P_1}$ and $S_{P_2}$.

\begin{lemma}
  \label{lem:transcendental_not_iso}
  The lattices $T(S_{P_1})$ and $T(S_{P_2})$ are not Hodge isometric.
\end{lemma}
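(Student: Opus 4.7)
The plan is to deduce the non-existence of a Hodge isometry directly from Corollary \ref{cor:notderivedeq} by invoking the derived global Torelli theorem for K3 surfaces. Recall the theorem of Mukai and Orlov: for two smooth projective K3 surfaces $S$ and $S'$, there exists a Fourier--Mukai equivalence $D^b(S) \cong D^b(S')$ if and only if the transcendental lattices $T(S)$ and $T(S')$ are Hodge isometric (see e.g.\ \cite[Ch.\ 16]{huybook}).

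The argument then proceeds by contradiction. First, I would suppose that there exists a Hodge isometry $T(S_{P_1}) \cong T(S_{P_2})$. Next, I would apply derived Torelli to upgrade this to a Fourier--Mukai equivalence $D^b(S_{P_1}) \cong D^b(S_{P_2})$. Finally, this would directly contradict Corollary \ref{cor:notderivedeq}, which asserts that $S_{P_1}$ and $S_{P_2}$ are not derived equivalent.

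There is essentially no substantive obstacle at this stage, since all the nontrivial input has already been carried out: the proof of Corollary \ref{cor:notderivedeq} relies on the (a priori stronger) fact that $S_{P_i}$ has no non-trivial Fourier--Mukai partner, which trivially forces both derived non-equivalence and, via derived Torelli, non-existence of a transcendental Hodge isometry. The conceptual point worth emphasizing is that the twisted derived equivalence (\ref{eq:twisted}) between $(S_{P_1}, \alpha_{P_1})$ and $(S_{P_2}, \alpha_{P_2})$ cannot be untwisted, not even at the level of the transcendental Hodge structure; the rational Hodge isometry of Corollary \ref{cor:rationalhodge} is therefore as much as one can hope for over $\mathbb{Z}$.
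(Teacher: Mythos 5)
Your proposal is correct and follows exactly the paper's own argument: the paper also deduces the lemma from Corollary \ref{cor:notderivedeq} together with the derived Torelli theorem for K3 surfaces (cited there as \cite[Cor.\ 16.3.7]{lecK3}). Nothing further is needed.
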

\begin{proof}
  This follows from Corollary \ref{cor:notderivedeq} and the derived Torelli
  theorem for K3 surfaces due to Mukai \cite{mukai} and Orlov \cite{orlov}, see also \cite[Cor.\ 16.3.7]{lecK3}.
\end{proof}

Let $B \in H^2(S_{P_1}, \frac{1}{2} \mathbb Z) \subset H^2(S_{P_1}, \mathbb Q)$ be
a $B$-field lifting the Brauer class $\alpha_{P_1} \in \Br(S_{P_1})[2]$, i.e, a preimage of $\alpha_{P_1}$ via the map
$$H^2(S_{P_1}, \frac{1}{2} \mathbb Z) \to H^2(S_{P_1}, \mathbb Q/ \mathbb Z) \to \Br(S_{P_1})$$
induced by the exponential sequence. Note that $B$ is well-defined up to an element
in $$H^2(S_{P_1}, \mathbb Z) + \frac{1}{2} \Pic(S_{P_1}).$$

\begin{lemma}
  \label{lem:residueclasses}
  Any $B$-field lift $B \in H^2(S_{P_1}, \frac{1}{2} \mathbb Z)$ of $\alpha_{P_1}$ satisfies
  $$2B^2 \equiv 2Bh \equiv 2Bs \equiv 1 \mod 2.$$
  In particular, these residue classes are independent of the choice of $B$-field
  representing the Brauer class $\alpha_{P_1} \in \Br(S_{P_1}).$
\end{lemma}
\begin{proof}
  One checks that the above residue classes mod $2$ do not change when we replace $B$ by
  $$B' = B + u + \frac{k h + l s}{2}.$$
  In view of \cite[Lem.\ 6.4]{kuz4fold}, it remains to show $2Bs \equiv 1 \mod 2$.

  By applying the arguments in the proof of
  \cite[Lem.\ 6.2]{kuz4fold} verbatim to the rational curve $C \subset S_{P_1}$ representing $s \in \Pic(S_{P_1})$ instead of a curve in the linear system $|h|$,
  we get
  $$2Bs \equiv \deg \det E \mod 2,$$
  where $E$ is a rank two vector bundle on $C$ such that $F_{P_1}|_C \simeq \mathbb P(E)$.
  The claim then follows as the restriction of the Brauer--Severi
  variety $F_{P_1}$ to the rational curve $\mathbb P^1 \simeq C$ is isomorphic to the projection
  $$\Bl_{\mathrm{pt}} \mathbb P^2 \simeq \mathbb P_{\mathbb P^1}(\mathcal O_{\mathbb P^1} \oplus \mathcal O_{\mathbb P}(1))\to \mathbb P^1$$ by Lemma
  \ref{lem:desc_resolution}.
\end{proof}
\begin{corollary}
  The Brauer class $\alpha_{P_1} \in \Br(S_{P_1})$ is nontrivial.
\end{corollary}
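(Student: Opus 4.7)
The plan is to deduce this immediately from Lemma \ref{lem:residueclasses}. The key observation is that triviality of a Brauer class is detected by the existence of an integral $B$-field lift, so one can simply compare the invariants computed in the lemma against the values forced by $B=0$.

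More precisely, suppose for contradiction that $\alpha_{P_1} \in \Br(S_{P_1})$ were trivial. Then the zero class $B = 0 \in H^2(S_{P_1}, \mathbb Q)$ is a valid $B$-field lift of $\alpha_{P_1}$. For this choice, one has trivially
\[
2B^2 = 2Bh = 2Bs = 0,
\]
so in particular $2Bs \equiv 0 \pmod 2$. However, Lemma \ref{lem:residueclasses} asserts that every $B$-field lift of $\alpha_{P_1}$ satisfies $2Bs \equiv 1 \pmod 2$, and that this residue class is independent of the lift. This is a contradiction, so $\alpha_{P_1}$ must be nontrivial.

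There is no real obstacle here: all the work has been done in Lemma \ref{lem:residueclasses}, where the nontrivial pairing $2Bs$ was computed using the explicit description of $F_{P_1}$ over the exceptional curve (Lemma \ref{lem:desc_resolution}). The corollary is a one-line consequence, recorded separately because it is the conceptual payoff of the lemma.
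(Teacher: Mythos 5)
Your proof is correct and is essentially identical to the paper's: both assume $\alpha_{P_1}$ is trivial, take $B=0$ as a $B$-field lift, and derive a contradiction with Lemma \ref{lem:residueclasses} (the paper uses the residue $2B^2$, you use $2Bs$, but these are interchangeable here).
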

\begin{proof}
  If $\alpha_{P_1} \in \Br(S_{P_1})$ were trivial, we could choose $B = 0$ as a $B$-field lift, which would satisfy $B^2 = 0$ in contradiction to the previous lemma.
\end{proof}

Recall that the Fano correspondence induces a Hodge isometry
$$(T(X), -(\,.\,)) \cong T(S_{P_1}, B),$$
cf.\ Proposition \ref{prop:fanohodge}.
Since $T(S_{P_1}, B) \subset T(S_{P_1})$ is a sublattice of index two and by Lemma \ref{lem:descpic}, we have $$\disc T(X) =
  \disc T(S_{P_1}, B) = 4 \disc T(S_{P_1}) = 16.$$

Note that, as the cover $f_i \colon F_L \to S_{P_i}$ is of degree two, pullback induces a Hodge isometry
$$f_i^* \colon (T(S_{P_i}), (\,.\,)) \congpf (f_i^* T(S_{P_i}) \subset H^2(F_L, \mathbb Z), 1/2(\,.\,)).$$

\begin{theorem}
  \label{thm:intersectionistx}
  The restriction of the Fano correspondence to the transcendental lattice of the cubic fourfold $X$ induces a Hodge isometry
  $$T(S_{P_1}, \alpha_{P_1}) \cong (T(X), -(\,.\,)) \cong (f_1^* T(S_{P_1}) \cap f_2^* T(S_{P_2}), 1/2(\,.\,)).$$
\end{theorem}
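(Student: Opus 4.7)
The first Hodge isometry in the statement is Proposition \ref{prop:fanohodge}, so the content of the theorem is
$$(T(X), -(\,.\,)) \cong \big( f_1^* T(S_{P_1}) \cap f_2^* T(S_{P_2}),\, \tfrac{1}{2}(\,.\,) \big).$$
The plan is to realize the intersection as the image of the Fano correspondence restricted to $F_L$ and then to pin it down by a short index argument, using Lemma \ref{lem:transcendental_not_iso} to rule out the bad alternative.

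Consider the Fano correspondence
$$\psi \colon H^4(X, \mathbb Z) \to H^2(F_L, \mathbb Z)$$
induced by restricting to $F_L \subset F(X)$ the universal family of lines on $X$. Since $F_L$ sits inside the Brauer--Severi variety $F_{P_i} \to S_{P_i}$ and $f_i$ factors as $F_L \hookrightarrow F_{P_i} \to S_{P_i}$, projecting through the Leray filtration of the $\mathbb P^1$-fibration $p_i \colon F_{P_i} \to S_{P_i}$ identifies the transcendental part of $H^2(F_{P_i}, \mathbb Z)$ with $T(S_{P_i})$ via $p_i^*$, yielding the factorization
$$\psi|_{T(X)} = f_i^* \circ \phi_i, \qquad i = 1, 2,$$
where $\phi_i \colon T(X) \hookrightarrow T(S_{P_i})$ is the Fano correspondence of Proposition \ref{prop:fanohodge}, whose image is $T(S_{P_i}, \alpha_{P_i})$. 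Setting $\Lambda \coloneqq f_1^* T(S_{P_1}) \cap f_2^* T(S_{P_2})$, it follows that
$$\psi(T(X)) = f_1^* T(S_{P_1}, \alpha_{P_1}) = f_2^* T(S_{P_2}, \alpha_{P_2}) \subset \Lambda.$$

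Since $[f_i^* T(S_{P_i}) : \psi(T(X))] = 2$ for each $i$, the lattice $\Lambda$ is sandwiched and therefore equals either $\psi(T(X))$ or the whole of $f_i^* T(S_{P_i})$, for each $i$ separately. If $\Lambda \neq \psi(T(X))$, then both $\Lambda = f_1^* T(S_{P_1})$ and $\Lambda = f_2^* T(S_{P_2})$ must hold; but then the composition $(f_2^*)^{-1} \circ f_1^*$ is a well-defined integral Hodge isometry $T(S_{P_1}) \cong T(S_{P_2})$, contradicting Lemma \ref{lem:transcendental_not_iso}.

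Hence $\Lambda = \psi(T(X)) = f_1^* T(S_{P_1}, \alpha_{P_1})$. Since $f_1^* v \cdot f_1^* w = 2\,v \cdot w$, the pullback induces the desired Hodge isometry $(T(S_{P_1}, \alpha_{P_1}), (\,.\,)) \cong (\Lambda, \tfrac{1}{2}(\,.\,))$; combined with Proposition \ref{prop:fanohodge}, this completes the proof. The most delicate step is the factorization $\psi|_{T(X)} = f_i^* \circ \phi_i$: since $\alpha_{P_i}$ is nontrivial, the Brauer--Severi $F_{P_i} \to S_{P_i}$ is not Zariski-locally a $\mathbb P^1$-bundle, so one must argue via the Leray filtration, where pullback along $p_i$ identifies the transcendental part of $H^2(F_{P_i}, \mathbb Z)$ with $T(S_{P_i})$, which is enough for our purposes.
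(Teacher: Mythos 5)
Your proposal is correct and follows essentially the same route as the paper: both arguments rest on the factorization of the Fano correspondence through $f_i^*\circ\phi_i$ (so that $T(X)$ embeds isometrically into $f_1^*T(S_{P_1})\cap f_2^*T(S_{P_2})$ with the form $\tfrac12(\,.\,)$) and on Lemma \ref{lem:transcendental_not_iso} to force the intersection to be a proper sublattice of $f_1^*T(S_{P_1})$. The only cosmetic difference is the concluding step — you use the fact that an index-two sandwich admits no intermediate lattice, while the paper compares discriminants ($16$ versus at least $16$); your version also spells out the containment $\psi(T(X))\subset\Lambda$ a bit more explicitly than the paper does.
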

\begin{proof}
  As $T(S_{P_1}) \not \cong T(S_{P_2})$ by Lemma \ref{lem:transcendental_not_iso},
  we have \begin{equation}\label{eq:thm47_1}f_1^* T(S_{P_1}) \cap f_2^* T(S_{P_2}) \subsetneq f_1^*
    T(S_{P_1}).\end{equation}
  Since $f_1^* T(S_{P_1}) \otimes \mathbb Q = f_2^* T(S_{P_2}) \otimes \mathbb Q$, the inclusion (\ref{eq:thm47_1}) is of finite index. Thus, when equipped with the intersection form $1/2(\,.\,)$, the discriminant of the left-hand side of (\ref{eq:thm47_1}) is at least $16 = 2^2 \disc T(S_{P_1})$.
  On the other hand, the Fano correspondence induces a Hodge isometric
  embedding
  \begin{equation}\label{eq:thm47_2}(T(X), -(\,.\,)) \hookrightarrow (f_1^* T(S_{P_1}) \cap f_2^* T(S_{P_2}), 1/2(\,.\,)).\end{equation}
  As the left-hand side of (\ref{eq:thm47_2}) has discriminant $16$, while the right-hand side has discriminant at least $16$, the claim follows.
\end{proof}

Recall from Corollary~\ref{cor:commonjac}, that the Jacobian elliptic K3 surfaces associated to the elliptic fibrations $S_{P_1} \to \mathbb P^1$ and $S_{P_2} \to \mathbb P^1$ coincide. We denote the common Jacobian elliptic fibration of $S_{P_1} / \mathbb P^1$ and $S_{P_2} / \mathbb P^1$ by $S \to \mathbb P^1$.
One can view $S_{P_1}$ and $S_{P_2}$ as Tate--\v{S}avarevi\v{c} twists of $S$ and thus, see \cite[Ch.\
  11]{lecK3}, there are Brauer classes $$\beta_i \coloneqq [S_{P_i}] \in \Sha(S) \cong
  \Br(S).$$
Furthermore, there is a natural restriction homomorphism $$r_i \colon \Br(S) \simeq \Hom(T(S), \mathbb Q / \mathbb Z) \to \Hom(T(S_{P_i}),  \mathbb Q / \mathbb Z) \simeq
  \Br(S_{P_i}),$$ induced by the inclusion $T(S_{P_i}) \subset T(S)$. Note that we have $\ker(r_i) = \langle \beta_i \rangle \subset \Br(S).$
\begin{theorem}
  \label{thm:descbrauer}
  The Brauer classes on the Jacobian K3 surface $S$ corresponding to the Tate--\v{S}afarevi\v{c} twists $S_{P_1}$ and $S_{P_2}$ restrict to the Brauer classes on the K3 surfaces $S_{P_1}$ and $S_{P_2}$ induced by the geometry of the cubic fourfold, i.e.,
  $$r_1(\beta_2) = \alpha_{P_1} \in \Br(S_{P_1}) \text{ and }\;
    r_2(\beta_1) = \alpha_{P_2} \in \Br(S_{P_2}).$$
\end{theorem}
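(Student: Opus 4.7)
My plan is to prove the first identity $r_1(\beta_2) = \alpha_{P_1}$; the second then follows by symmetry in the roles of $P_1$ and $P_2$. The strategy is to show that both classes are nontrivial elements of the kernel of $f_1^* \colon \Br(S_{P_1})[2] \to \Br(F_L)[2]$ along the double cover from Proposition \ref{prop:descfl}, and that this kernel is cyclic of order two, forcing equality. Nontriviality of $\alpha_{P_1}$ is the preceding corollary. For $r_1(\beta_2)$: if $r_1(\beta_2) = 0$ then $\beta_2 \in \ker r_1 = \langle \beta_1 \rangle = \{0,\beta_1\}$; but $\beta_2 \neq 0$ since by Lemma \ref{lem:descpic} the elliptic fibration $S_{P_2} \to \mathbb{P}^1$ admits no section (no class in $\Pic(S_{P_2})$ pairs to $1$ with the fiber class $f = h - s$, as intersection numbers on the given basis are always even), and $\beta_2 = \beta_1$ would identify the Tate--\v{S}afarevi\v{c} twists $S_{P_1}$ and $S_{P_2}$ of $S$, contradicting Theorem \ref{thm:notiso}.

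The identity $f_1^*\alpha_{P_1} = 0$ was already established. To show $f_1^* r_1(\beta_2) = 0$, I apply Proposition \ref{prop:descfl} to both planes: this gives Cartesian identifications of $F_L$ with $S_{P_1} \times_{\mathbb P^1} L$ and with $S_{P_2} \times_{\mathbb P^1} L$. Writing $S_L := S \times_{\mathbb P^1} L$ and $\pi_L \colon S_L \to S$ for the base change of the Jacobian fibration, $F_L$ becomes a torsor over $S_L$ whose Tate--\v{S}afarevi\v{c} class is both $\pi_L^*\beta_1$ and $\pi_L^*\beta_2$. Functoriality of the restriction map under base change should yield a commutative square
$$
\begin{tikzcd}
\Br(S) \arrow[r, "r_1"] \arrow[d, "\pi_L^*"'] & \Br(S_{P_1}) \arrow[d, "f_1^*"] \\
\Br(S_L) \arrow[r, "\bar r"] & \Br(F_L),
\end{tikzcd}
$$
in which $\bar r$ is the analogous restriction for $F_L$ as a twist of $S_L$, with kernel $\langle [F_L] \rangle = \langle \pi_L^* \beta_2 \rangle$. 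Thus $f_1^* r_1(\beta_2) = \bar r(\pi_L^*\beta_2) = 0$.

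For the kernel bound, I use $\Br(S_{P_1})[2] = \Hom(T(S_{P_1}), \mathbb{Z}/2)$: a class $\phi$ lies in $\ker f_1^*$ iff $\phi$ vanishes on $f_{1,*} T(F_L) \subseteq T(S_{P_1})$. From $f_{1,*} f_1^* = 2\cdot\mathrm{id}$ and from $f_1^*\alpha_{P_1} = 0$, we have $2T(S_{P_1}) \subseteq f_{1,*}T(F_L) \subseteq T(S_{P_1},\alpha_{P_1})$, the latter of index two. A local calculation at the two branch fibers of $f_1$, or a discriminant comparison leveraging Theorem \ref{thm:intersectionistx}, should give equality $f_{1,*}T(F_L) = T(S_{P_1},\alpha_{P_1})$, whence $\ker f_1^* \cap \Br(S_{P_1})[2] = \langle \alpha_{P_1} \rangle \cong \mathbb{Z}/2$. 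The nontrivial class $r_1(\beta_2)$ in this cyclic kernel therefore coincides with $\alpha_{P_1}$.

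The principal obstacle is the commutative square in the second step: rigorously justifying the functoriality of the restriction map $r_1$ under base change along $\pi_L$. This requires unpacking the definition of $r_i$ (via Galois cohomology over $\mathbb{C}(\mathbb P^1)$, or via the Leray spectral sequence for the elliptic fibration $S \to \mathbb P^1$) and matching it with pullback along $\pi_L$; since Tate--\v{S}afarevi\v{c} classes are controlled by the generic fibers and $F_L = S_{P_i} \times_{\mathbb P^1} L$ holds over the generic point, this should reduce to standard base-change compatibilities for \'etale cohomology of elliptic fibrations.
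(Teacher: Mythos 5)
Your strategy---characterize $\alpha_{P_1}$ and $r_1(\beta_2)$ as nontrivial elements of $\ker\bigl(f_1^*\colon \Br(S_{P_1})[2] \to \Br(F_L)\bigr)$ and show this kernel is cyclic of order two---is genuinely different from the paper's argument, and its first half is essentially sound: nontriviality of $\alpha_{P_1}$ is the corollary to Lemma \ref{lem:residueclasses}, your parity argument that $f=h-s$ admits no section (so $\beta_2\neq 0$) is correct, and $\beta_1\neq\beta_2$ follows from Theorem \ref{thm:notiso}. But the decisive step is exactly the one you leave unproved. Translating $\ker f_1^*$ into lattice terms, your containments give only
$$2\,T(S_{P_1}) \;\subseteq\; f_{1,*}T(F_L) \;\subseteq\; T(S_{P_1},\alpha_{P_1}),$$
so a priori $\ker(f_1^*)\cap\Br(S_{P_1})[2] = \Hom\bigl(T(S_{P_1})/f_{1,*}T(F_L), \tfrac12\mathbb Z/\mathbb Z\bigr)$ could be anywhere between $\mathbb Z/2$ and $(\mathbb Z/2)^{20}$. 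Pinning it down requires the \emph{integral} image of the Gysin map $f_{1,*}$ on $T(F_L)$, i.e.\ the saturation of $f_1^*T(S_{P_1})$ inside $H^2(F_L,\mathbb Z)$ --- and note that $f_1^*$ is \emph{not} primitive on the algebraic part (the fiber class pulls back to twice a fiber of $F_L\to L$), so primitivity on the transcendental part cannot be taken for granted. Neither a ``local calculation at the branch fibers'' nor Theorem \ref{thm:intersectionistx} (which concerns $f_1^*T(S_{P_1})\cap f_2^*T(S_{P_2})$, not $f_{1,*}T(F_L)$) obviously supplies this. There are two further unfinished points: the base-change square for $r_1$, which you flag yourself, and the identification $\Br(F_L)[2]\cong\Hom(T(F_L),\tfrac12\mathbb Z/\mathbb Z)$, which needs $F_L$ simply connected with unimodular, torsion-free $H^2$.

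For comparison, the paper sidesteps $F_L$ entirely. It composes $T(X)\cong T(S_{P_i},\alpha_{P_i})\hookrightarrow T(S_{P_i})\hookrightarrow T(S)$ to exhibit $T(S)$ as the \emph{unique} index-four overlattice of $T(X)$ (Corollary \ref{cor:descoverlattice}), with $T(S)/T(X)\cong(\mathbb Z/2\mathbb Z)^{2}$. The three distinct sublattices $T(S_{P_i})=T(S,\beta_i)$ then give the three order-two subgroups of this quotient, so $T(S_{P_1})\cap T(S_{P_2})=T(X)$; hence $r_1(\beta_2)=\beta_2|_{T(S_{P_1})}$ and $\alpha_{P_1}$ are both surjections $T(S_{P_1})\to\mathbb Z/2\mathbb Z$ with kernel $T(X)$, and therefore coincide. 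If you want to keep your viewpoint, the shortest repair is to replace the $F_L$-kernel computation by this observation: you already know $\ker\bigl(r_1(\beta_2)\bigr)\supseteq T(S_{P_1})\cap T(S_{P_2})\supseteq T(X)$ and that $T(X)$ has index two in $T(S_{P_1})$, so nontriviality of $r_1(\beta_2)$ alone forces its kernel to equal $T(X)=\ker(\alpha_{P_1})$.
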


In order to prove the theorem, we need to understand the possible overlattices of $T(X)$, which are controlled by its discriminant group.
Instead of trying to compute the discriminant group of the lattice $T(X)$ directly, we use the twisted Mukai lattice of the twisted K3 surface $(S_{P_1}, \alpha_{P_1})$ and the Hodge isometry $T(X) \cong T(S_{P_1}, \alpha_{P_1})$ together with the fact that, up to a sign, the discriminant group of a primitive sublattice of a unimodular lattice is canonically isomorphic to the discriminant group of its orthogonal complement, which turns out to be easier to control in our situation.
For the definition and basic properties of twisted Mukai lattices, consult the
original \cite{huystellari}.
We let $\tilde{H}(S_{P_1}, B, \mathbb Z)$ denote the $B$-twisted Mukai
lattice of $S_{P_1}$. Recall that, on the level of abelian groups, we have
$$\tilde{H}(S_{P_1}, B, \mathbb Z) = H^0(S_{P_1}, \mathbb Z) \oplus H^2(S_{P_1}, \mathbb Z) \oplus H^4(S_{P_1}, \mathbb Z).$$
Fix  generators $e_0, e_4 \in \tilde{H}(S_{P_1}, B, \mathbb Z)$ of $H^0(S_{P_1},
  \mathbb Z), H^4(S_{P_1}, \mathbb Z) \subset \tilde{H}(S_{P_1}, B, \mathbb Z)$.

\begin{lemma}[Cf.\ {\cite[Lem.\ 3.1]{macristellari}}]
  \label{lem:picdesc}
  The algebraic part of the twisted Mukai lattice is given by
  $$\widetilde{H}^{1,1}(S_{P_1}, B, \mathbb Z) = \langle 2e_0+2B, \Pic(S_{P_1}),e_4\rangle.$$
\end{lemma}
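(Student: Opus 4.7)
The plan is to translate the Hodge condition on the twisted Mukai lattice into a concrete congruence on integer Mukai vectors. Recall that $\widetilde H^{2,0}(S_{P_1}, B) = \mathbb C \langle \exp(B) \sigma \rangle = \mathbb C \langle \sigma + B \cup \sigma \rangle$ for a symplectic form $\sigma \in H^{2,0}(S_{P_1})$. Pairing $v = r e_0 + c + s e_4$ against this class with respect to the Mukai pairing $\langle (r, c, s), (r', c', s') \rangle = c \cdot c' - rs' - r's$ yields $(c - rB) \cdot \sigma$, so $v$ is of type $(1,1)$ in the twisted Hodge structure if and only if $c - rB \in \Pic(S_{P_1}) \otimes \mathbb Q$.

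For the inclusion $\supseteq$, checking that each proposed generator satisfies this condition is immediate: for $2e_0 + 2B$ one has $c - rB = 0$, for $\lambda \in \Pic(S_{P_1})$ one has $c - rB = \lambda$, and for $e_4$ one has $c - rB = 0$. The nontrivial direction $\subseteq$ reduces to showing that the $H^0$-component $r$ of any $v \in \widetilde H^{1,1}(S_{P_1}, B, \mathbb Z)$ must be even. Once this is granted, write $r = 2k$; since $\alpha_{P_1}$ is $2$-torsion we have $2B \in H^2(S_{P_1}, \mathbb Z)$, so $2kB$ is integral and hence the integer class $c - 2kB$ lies in $H^2(S_{P_1}, \mathbb Z) \cap (\Pic(S_{P_1}) \otimes \mathbb Q) = \Pic(S_{P_1})$. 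Setting $\lambda = c - 2kB$ yields $v = k(2e_0 + 2B) + \lambda + s e_4$, which lies in the claimed span.

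The main obstacle is therefore the parity claim, which I expect to handle by contradiction. Suppose $r$ is odd and $c - rB \in \Pic(S_{P_1}) \otimes \mathbb Q$. Then $rB = c - (c - rB) \in H^2(S_{P_1}, \mathbb Z) + \Pic(S_{P_1}) \otimes \mathbb Q$. Since $2B$ is integral and $r$ is odd, the image of $rB$ in the quotient $H^2(S_{P_1}, \mathbb Q)/(H^2(S_{P_1}, \mathbb Z) + \Pic(S_{P_1}) \otimes \mathbb Q) \hookrightarrow \Br(S_{P_1})$ coincides with the image of $B$, namely the Brauer class $\alpha_{P_1}$. This would force $\alpha_{P_1} = 0$, contradicting the nontriviality of $\alpha_{P_1}$ established in the corollary above. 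This closes the argument and identifies $\widetilde H^{1,1}(S_{P_1}, B, \mathbb Z)$ with the stated span.
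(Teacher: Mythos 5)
Your argument is correct, but it proceeds quite differently from the one in the paper. The paper's proof is a two-line discriminant count: the inclusion $\supseteq$ is immediate, and both sides are rank-four lattices of discriminant $16$ --- the right-hand side by a direct Gram-matrix computation using Lemma \ref{lem:residueclasses} and Lemma \ref{lem:descpic}, the left-hand side because its orthogonal complement in the unimodular Mukai lattice is $T(S_{P_1}, B) \cong T(X)$, whose discriminant was computed just before the lemma --- so the finite-index inclusion must be an equality. You instead argue elementwise: a class $(r, c, s)$ is of twisted type $(1,1)$ iff $c - rB \in \Pic(S_{P_1}) \otimes \mathbb Q$, and the only obstruction to membership in the span is the parity of $r$, which you rule out by observing that an odd $r$ would force $\alpha_{P_1}$ to lie in the kernel $H^2(S_{P_1}, \mathbb Z) + \Pic(S_{P_1}) \otimes \mathbb Q$ of the exponential map to $\Br(S_{P_1})$, contradicting the nontriviality of $\alpha_{P_1}$ established after Lemma \ref{lem:residueclasses}. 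Your route is longer but more robust: it does not use the specific Picard lattice of $S_{P_1}$ or the identification of $T(S_{P_1}, B)$ with $T(X)$, only that $\alpha_{P_1}$ is a nontrivial $2$-torsion class with half-integral $B$-field lift, and so it is really a proof of the general statement behind the cited Lemma 3.1 of Macr\`i--Stellari; the paper's proof, by contrast, is essentially free given the discriminant bookkeeping already done in that section. Do note that your appeal to the integrality of $2B$ and the injectivity of $H^2(S_{P_1}, \mathbb Q)/(H^2(S_{P_1}, \mathbb Z) + \Pic(S_{P_1}) \otimes \mathbb Q) \hookrightarrow \Br(S_{P_1})$ are both consistent with the paper's conventions (the stated ambiguity $H^2(S_{P_1}, \mathbb Z) + \tfrac{1}{2}\Pic(S_{P_1})$ of the $B$-field presupposes exactly this normalization), so there is no gap.
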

\begin{proof}
  The right-hand side is easily seen to be contained in the left-hand side. Since both have
  discriminant $16$, the claim follows.
\end{proof}

\begin{lemma}
  \label{lem:discTx}
  The discriminant group of the lattice $T(X)$ is
  $$A_{T(X)} \coloneqq T(X)^\vee / T(X) = \mathbb Z / 4 \mathbb Z \oplus \mathbb Z
    / 4 \mathbb Z.$$ The discriminant form is given by the intersection matrix
  $$
    \begin{pmatrix} \frac{1}{2} & \frac{3}{4} \\ \frac{3}{4} &
                \frac{1}{2}
    \end{pmatrix}.$$
\end{lemma}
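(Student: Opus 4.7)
The plan is to transport the computation to the twisted Mukai lattice $\widetilde H(S_{P_1}, B, \mathbb Z)$, which is unimodular as a lattice. By Proposition \ref{prop:fanohodge} we have a Hodge isometry $(T(X), -(\,.\,)) \cong T(S_{P_1}, \alpha_{P_1})$, and the right-hand side is, by construction, the orthogonal complement of $L \coloneqq \widetilde H^{1,1}(S_{P_1}, B, \mathbb Z)$ inside the unimodular lattice $\widetilde H(S_{P_1}, B, \mathbb Z)$. Unimodularity yields a canonical isomorphism $A_{T(X)} \cong A_L$ together with $q_{T(X)} = -q_L$, reducing the problem to computing the discriminant form of the rank-four lattice $L$.

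By Lemma \ref{lem:picdesc}, $L$ has basis $\{2 e_0 + 2 B,\, h,\, s,\, e_4\}$, and Lemma \ref{lem:residueclasses} tells us that $2 B^2$, $2 B \cdot h$, $2 B \cdot s$ are all odd integers. In particular $(2 B^2 - 1)/2$ is integral, so replacing $2 e_0 + 2 B$ by $v_1' \coloneqq (2 e_0 + 2 B) + \tfrac{2 B^2 - 1}{2} e_4$ normalises the self-pairing to $v_1' \cdot v_1' = 2$. Using the remaining freedom in the choice of $B$-field to fix $2 B \cdot h = 2 B \cdot s = 1$, the Mukai pairing on $L$ in the basis $v_1', h, s, e_4$ takes the explicit integral form
$$
G' = \begin{pmatrix} 2 & 1 & 1 & -2 \\ 1 & 2 & 0 & 0 \\ 1 & 0 & -2 & 0 \\ -2 & 0 & 0 & 0 \end{pmatrix},
$$
of determinant $16$. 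A short Smith normal form calculation then gives $G' \sim \operatorname{diag}(1, 1, 4, 4)$, which already confirms $A_L \cong \mathbb Z / 4 \oplus \mathbb Z / 4$.

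For the pairing itself, one inverts $G'$ to obtain the dual basis $v_1^\ast, h^\ast, s^\ast, e_4^\ast$ of $L^\vee$. Expanding the relations $v_i = \sum_j G'_{ij}\, v_j^\ast$ modulo $L$ shows that $h^\ast$ and $e_4^\ast$ generate the quotient, with $v_1^\ast \equiv -2 h^\ast$ and $s^\ast \equiv -h^\ast$. The pairings are then read off $G'^{-1}$ as $h^\ast \cdot h^\ast = 1/2$, $h^\ast \cdot e_4^\ast = 1/4$ and $e_4^\ast \cdot e_4^\ast = -1/2$. Negating (to pass from $q_L$ to $q_{T(X)}$) and reducing modulo $\mathbb Z$ yields the claimed matrix $\begin{pmatrix} 1/2 & 3/4 \\ 3/4 & 1/2 \end{pmatrix}$. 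The main subtlety in the argument is the $B$-field bookkeeping: one must verify that the simplifications of the Gram matrix (integrality of $(2B^2-1)/2$ and achievability of $2B\cdot h = 2B\cdot s = 1$) are legitimate via Lemma \ref{lem:residueclasses}, and that the resulting discriminant form is independent of the chosen representative of $\alpha_{P_1}$.
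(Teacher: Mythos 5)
Your proposal follows exactly the paper's route: the paper reduces to the discriminant form of $\widetilde H^{1,1}(S_{P_1},B,\mathbb Z)$ via unimodularity of the twisted Mukai lattice and then declares the rest a ``straightforward computation'' from Lemmas \ref{lem:picdesc} and \ref{lem:residueclasses}; you have simply carried that computation out, and your Gram matrix $G'$, its determinant, the Smith normal form $\operatorname{diag}(1,1,4,4)$, and the entries of $G'^{-1}$ all check out. Two small slips are worth flagging, neither of which affects the stated result. First, the sign bookkeeping: the Fano correspondence contributes one sign flip ($q_{T(X)}=-q_{T(S_{P_1},B)}$) and passing to the orthogonal complement in the unimodular Mukai lattice contributes another ($q_{T(S_{P_1},B)}=-q_L$), so in fact $q_{T(X)}=+q_L$, not $-q_L$; your final negation is therefore spurious, but since the paper records only the bilinear form modulo $\mathbb Z$, replacing $e_4^*$ by $-e_4^*$ turns the off-diagonal $1/4$ into $3/4$ and one lands on the same matrix either way. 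Second, the relation $s^*\equiv -h^*$ in $L^\vee/L$ is false, since $s^*+h^*=\tfrac12(h-s)\notin L$; the correct relation is $s^*\equiv -h^*+2e_4^*$. This does not damage the argument because $h^*$ and $e_4^*$ do generate $L^\vee/L$ (one checks directly that $ah^*+be_4^*\in L$ forces $4\mid a$ and $4\mid b$), which together with the Smith normal form gives the claimed group and form.
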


\begin{proof}
  As $\tilde{H}(S_{P_1}, B, \mathbb Z)$ is unimodular and
  $$(T(X), -(\, . \,)) \cong T(S_{P_1}, B) = \tilde{H}^{1, 1}(S_{P_1}, B, \mathbb Z)^{\perp} \subset \tilde{H}(S_{P_1}, B, \mathbb Z),$$
  we have $$(A_{T(X)}, q) \cong (A_{T(S_{P_1}, B)}, -q) \cong (A_{\widetilde{H}^{1,1}(S_{P_1}, B, \mathbb Z)}, q),$$
  where $q$ denotes the respective discriminant form.
  The claim then follows from Lemma \ref{lem:picdesc} and Lemma \ref{lem:residueclasses} by a straightforward computation.
\end{proof}

\begin{corollary}
  \label{cor:descoverlattice}
  There is a unique overlattice $T(X) \subset T'$ of index four, up to isomorphism. Furthermore,
  we have
  $$T' / T(X) \cong \mathbb Z / 2 \mathbb Z \oplus \mathbb Z / 2 \mathbb Z.$$
\end{corollary}
\begin{proof}
  This follows from the correspondence between overlattices of $T(X)$ and isotropic subgroups of the discriminant group $A_{T(X)}$, see \cite[Prop.\ 1.5.1]{nikulin}, cf.\ \cite[Prop.\ 14.0.2]{lecK3}.
\end{proof}

In the following, we give two proofs of Theorem \ref{thm:descbrauer}. The first one is lattice-theoretic and uses the description of discriminant group of $T(X)$ from above. The second one is more geometric and uses the relative Jacobian of the elliptic fibration $F_L \to \mathbb P^1$.

\begin{proof}[Proof of Theorem \ref{thm:descbrauer}]
  Composing the embeddings $$T(X) \cong T(S_{P_i}, \alpha_{P_i}) \hookrightarrow T(S_{P_i}) \hookrightarrow T(S)$$ realizes $T(S)$ as an overlattice of $T(X)$ of index $4$, which is unique by Corollary \ref{cor:descoverlattice}.
  In particular, we have the following commutative diagram, where each arrow represents a
  Hodge isometric embedding of index two.
  $$
    \begin{tikzcd}
      & T(S)                                                                                 &                                  \\
      T(S_{P_1}) \arrow[ru] & T(S_{P_2}) \arrow[u]                                                      & T(S_{P_3}) \arrow[lu] \\
      & T(X) \arrow[ru] \arrow[u] \arrow[lu] &
    \end{tikzcd}
  $$
  The inclusions $T({S_{P_i}})\subset T(S)$ correspond to the Brauer classes $\beta_i \in \Br(S)$, i.e., $T(S_{P_i}) = T(S, \beta_i) \subset T(S),$ which are distinct by Lemma \ref{lem:transcendental_not_iso}, while the inclusion $T(X) \subset T(S_{P_i})$ corresponds to the Brauer class $\alpha_{P_i} \in \Br(S_{P_i})$, i.e., $T(X) = T(S_{P_i}, \alpha_{P_i}) \subset T(S_{P_i})$.
  The theorem then follows from the fact that $T(S) / T(X) \cong (\mathbb Z / 2 \mathbb Z)^{\oplus 2}$ by Corollary \ref{cor:descoverlattice} and the commutativity of the diagram above. More precisely, we can argue as follows: By construction, we have $r_i(\beta_j) = \beta_j |_{T(S_{P_i})}$. Therefore, we have
  $$T(S_{P_i}, r_i(\beta_j)) = \ker(\beta_j|_{T(S_{P_i})}) = T(S_{P_i}) \cap T(S_{P_j}),$$
  since $T(S_{P_j})$ is the kernel of $\beta_j$. Then, $T(X)$ is contained in $\ker(\beta_j|_{T_{S_{P_i}}})$, but it also has index $2$ in $T(S_{P_i})$, so we must have
  $$T(S_{P_i}, r_i(\beta_j)) = \ker(\beta_j|_{T(S_{P_i})}) = T(S_{P_i}) \cap T(S_{P_j}) = T(X).$$
  Thus, $\alpha_i$ and $r_i(\beta_j)$ have the same kernel and therefore, they are proportional. But since $\alpha_i$ has order $2$ in $\Br(S_{P_i})$, this means that $\alpha_i = r_{i}(\beta_j)$.

  More geometrically, one could argue as follows: The (twisted) Poincar\'e sheaf on $F_L \times_{\mathbb P^1} J(F_L/\mathbb P^1)$ induces a
  morphism $\phi \colon H^2(F_L, \mathbb Z) \to H^2(J(F_L / \mathbb P^1), \mathbb
    Z),$ where $J(F_L / \mathbb P^1)$ denotes the relative Jacobian of the elliptic fibration $F_L \to \mathbb P^1$.
  Moreover, taking the quotient by the induced action of $\iota_{1} \in \Aut(F_L)$ on $J(F_L / \mathbb P^1)$, which agrees with the one induced by $\iota_2 \in \Aut(F_L)$, yields a finite morphism $f \colon J(F_L / \mathbb P^1) \to S$ of degree two.
  One can check that $\phi$ induces a commutative diagram of the form
  $$
    \begin{tikzcd}
      & f^*T(S) && \subset H^2(J(F_L / \mathbb P^1), \mathbb Z)                                                                                                        \\
      f_1^*T(S_{P_1}) \arrow[ru] & f_2^*T(S_{P_2}) \arrow[u]
      & f_3^*T(S_{P_3}) \arrow[lu]
      & \subset H^2(F_L, \mathbb Z) \arrow[u,
        "\phi"]\\
      & T(X) \arrow[ru]
      \arrow[u] \arrow[lu] &
      & \subset H^4(X, \mathbb Z) \arrow[u, "\Phi"],
    \end{tikzcd}
  $$
  which immediately yields the claim.
\end{proof}

As a consequence of Corollary \ref{cor:descoverlattice}, we obtain the following relation:
\begin{corollary}
  The Brauer classes $\beta_i \in \Br(S)$ satisfy
  $$[S_{P_1}]\cdot[S_{P_2}] = \beta_1 \cdot \beta_2 = \beta_3 = [S_{P_3}] \in \Br(S).\footnote{Despite the identification $\Br(S) \simeq \Hom(T(S), \mathbb Q / \mathbb Z)$ via the exponential sequence, we write the group law of the Brauer group multiplicatively.}$$
\end{corollary}
\begin{proof}
  As $T(S)$ is an overlattice of $T(X)$ of index four, we have $T(S) / T(X) \cong (\mathbb Z / 2 \mathbb Z)^{\oplus 2}$ by Corollary \ref{cor:descoverlattice}.
  As the nontrivial elements in $T(S) / T(X)$ correspond to the three distinct Brauer classes $\beta_i \in \Br(S)$ by the arguments in the proof of Theorem \ref{thm:descbrauer}, the claim follows.
\end{proof}

\begin{remark}
  \label{rem:pgdecomp}
  As discussed towards the end of Section~\ref{sec:involutions},
  in addition to the covering involutions $\iota_{1}, \iota_{2} \in \Aut(F_L)$ with quotient $f_i \colon F_L \to F_L / \iota_{i} \cong S_{P_i}$
  there is another (rational) involution $\iota_L$ on $F_L$ whose quotient $f_L \colon F_L \dashrightarrow F_L / \iota_L$ is birational to a (very singular) quintic surface $\mathfrak D_L \subset \mathbb P^3$. Let $D_L$ denote a minimal resolution of $\mathfrak D_L$.
  By Lemma \ref{lem:pgdl}, we have $p_g(D_L) = 2$. A specialization of the arguments in \cite[Sec.\ 3.2]{huybrechtslines} shows that $\iota_L$ acts as $-1$ on the one-dimensional subspace $$f_i^* H^{2, 0}(S_{P_i}) \subset H^{2, 0}(F_L),$$
  while it clearly acts as $+1$ on the two-dimensional subspace $$f_L^* H^{2, 0}(D_L) \subset H^{2, 0}(F_L).$$
  Since $p_g(F_L) = 3$, it follows that
  $$H^{2, 0}(F_L) = f_1^* H^{2, 0}(S_{P_1}) \oplus f_L^* H^{2, 0}(D_L),$$
  thus yielding a decomposition of the (rational) transcendental lattice of $F_L$ into two orthogonal components:
  $$T(F_L) \otimes \mathbb Q = f_1^* T(S_{P_1}) \otimes \mathbb Q \oplus f_L^* T(D_L) \otimes \mathbb Q.$$
\end{remark}

\section{Derived categories}

\label{sec:derived}

The aim of this section is to study the relation between the derived categories
of the associated twisted K3 surfaces and their double cover $F_L$.

\subsection{Twisted derived categories of elliptic K3 surfaces}\label{sec:derived2}
Recall from Corollary~\ref{cor:commonjac} that both $S_{P_1}$ and $S_{P_2}$ are Tate--\v{S}afarevi\v{c} twists of the Jacobian elliptic K3 surface $S$.
Thus, we obtain Brauer classes $\beta_i = [S_{P_i}] \in \Br(S)[2]$. As before, we let
$$r_i \colon \Br(S) \simeq \Hom(T(S), \mathbb Q / \mathbb Z) \to \Hom(T(S_{P_i}), \mathbb Q / \mathbb Z) \simeq \Br(S_{P_i})$$
denote the restriction map, cf. \cite[Thm.\ 5.4.3]{caldararuthesis}.
By Theorem \ref{thm:descbrauer}, we have
\begin{align*}
  r_i(\beta_i) = 0 \in \Br(S_{P_i})
  \text{ and }
  r_i(\beta_j) = \alpha_{P_i} \in \Br(S_{P_i})
\end{align*}
for $1\leq i \neq j \leq 3$. For $\alpha \in \Br(S) \cong \Sha(S)$, let $S_{\alpha}$ denote
the associated elliptic K3 surface and $r_{\alpha} \colon \Br(S) \to \Br(S_{\alpha})$ the natural restriction map. In particular, we have $S_{\beta_i} = S_{P_i}.$ By a result of Donagi and Pantev, Tate--\v{S}afarevi\v{c} twists of $S$ are twisted derived equivalent:
\begin{theorem}[{Donagi--Pantev \cite[Thm.\ A]{donagipantev}}]
  Let $S \to \mathbb P^1$ be an elliptic K3 surface with a section (and at worst $I_1$-fibers) and
  $\alpha, \beta \in \Br(S)$. Then there is an exact linear equivalence
  $$D^b(S_{\alpha}, r_{\alpha}(\beta)) \cong D^b(S_{\beta},r_{\beta}(\alpha)^{-1}).$$
\end{theorem}
We thus have two explanations for the equivalence between $D^b(S_{P_1}, \alpha_{P_1})$ and $D^b(S_{P_2}, \alpha_{P_2})$: Applying \cite[Thm.\ 1.2]{moschetti} twice yields the equivalence
$$D^b(S_{P_1}, \alpha_{P_1}) \simeq \mathcal A_X \simeq D^b(S_{P_2}, \alpha_{P_2}),$$
while combining Theorem~\ref{thm:descbrauer} and \cite[Thm.\ A]{donagipantev} yields the equivalenc
$$D^b(S_{P_1}, \alpha_{P_1}) \simeq D^b(S_{\beta_1}, r_1(\beta_2)) \simeq D^b(S_{\beta_2}, r_2(\beta_1)) \simeq D^b(S_{P_2}, \alpha_{P_2}).$$

It would be interesting to know the precise relation between the equivalence constructed using the theorem of Donagi and Pantev
and the equivalence via the Kuznetsov component. As a small step towards a better understanding, the action of the equivalence via the Kuznetsov component on the twisted Mukai
lattices is studied in Appendix~\ref{sec:latticecomp}.

\subsection{Twisted derived categories of cyclic double covers}

In order to relate the twisted derived categories of the two K3 surfaces $S_{P_1}$ and the surface $F_L$,
we study equivariant twisted derived categories of double covers. In particular, we generalize the results of \cite[Sec.\ 4]{kuzperry} to the twisted setting.

Let $Y$ be an algebraic variety and $\mathcal L$ a line bundle on $Y$. Suppose
$Z$ is a Cartier divisor in $Y$ defined by a section of $\mathcal L^2$. Let $f
  \colon X \to Y$ be the double cover of $Y$ ramified over $Z$, let $\iota \in
  \Aut(X)$ denote the covering involution and fix a Brauer class $\alpha \in \Br(Y)$.

\begin{proposition}
  \label{prop:nat_twisted_action}
  The involution $\iota \in \Aut(X)$ naturally acts on $D^b(X, f^*\alpha)$ via pullback.
\end{proposition}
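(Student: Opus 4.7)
The statement is essentially formal, given the strict equality $f \circ \iota = f$ that characterizes the covering involution of the double cover $f \colon X \to Y$. The plan is to observe that this equality upgrades the tautological isomorphism $\iota^* f^* \alpha \cong f^* \alpha$ of Brauer classes on $X$ to a literal equality of \v{C}ech cocycles, which is precisely what allows pullback along $\iota$ to descend to an endofunctor on $f^*\alpha$-twisted sheaves without any extra coherence data needing to be tracked.

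Concretely, I would fix an \'etale (or analytic) open cover $\{U_i\}$ of $Y$ together with a cocycle representative $(\alpha_{ijk}) \in \check Z^2(\{U_i\}, \mathcal O_Y^\times)$ of $\alpha$. The pulled-back cover $\{f^{-1}(U_i)\}$ of $X$ is then $\iota$-invariant, since $\iota^{-1}(f^{-1}(U_i)) = (f\iota)^{-1}(U_i) = f^{-1}(U_i)$, and the cocycle $(f^*\alpha_{ijk})$ representing $f^*\alpha$ satisfies $\iota^*(f^*\alpha_{ijk}) = (f\iota)^*\alpha_{ijk} = f^*\alpha_{ijk}$. Consequently, an $f^*\alpha$-twisted sheaf on $X$, recorded as a collection of sheaves on $f^{-1}(U_i)$ with transition data twisted by $f^*\alpha_{ijk}$, is carried by $\iota^*$ to another collection on the same cover with the same twisted transition data. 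This yields an exact endofunctor of $\Coh(X, f^*\alpha)$, and hence an autoequivalence of $D^b(X, f^*\alpha)$, whose square is canonically isomorphic to the identity since $\iota^2 = \mathrm{id}_X$. Thus one obtains a $\mathbb Z/2\mathbb Z$-action by pullback.

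The construction is entirely formal, so there is no genuine obstacle. The only point worth flagging is that strict (as opposed to merely cohomological) $\iota$-invariance of the chosen cocycle is automatic, coming directly from the scheme-theoretic equality $f\iota = f$; without this one would only get an equivalence $\iota^* \colon D^b(X, f^*\alpha) \congpf D^b(X, \iota^* f^*\alpha)$ together with an auxiliary identification of twists, and would have to check that these data assemble into an action. That this subtlety disappears is essentially the content of the proposition.
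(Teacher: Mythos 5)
Your argument is correct and rests on the same key observation as the paper's proof: the scheme-theoretic identity $f \circ \iota = f$ makes the twisting datum on $X$ \emph{strictly} $\iota$-invariant, not just invariant up to isomorphism, so pullback descends to the twisted category with no coherence data to track. The difference is one of model: you work with a \v{C}ech cocycle representative of $\alpha$ and twisted gluing data, whereas the paper chooses an Azumaya algebra $\mathcal A$ representing $\alpha$ and uses the identification $\iota^* f^* \mathcal A = f^* \mathcal A$ to act on $\Coh_{f^*\mathcal A}(X)$. The paper then spends the second half of its proof checking that the resulting autoequivalence is independent of the chosen Azumaya algebra, by showing that the Morita equivalence $E \mapsto E \otimes f^* F^\vee$ between $\Coh_{f^*\mathcal A}(X)$ and $\Coh_{f^*\mathcal B}(X)$ commutes with $\iota^*$. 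Your write-up has the exactly parallel issue --- independence of the chosen cover and cocycle representative --- which you gesture at but do not carry out: if $(\alpha'_{ijk})$ differs from $(\alpha_{ijk})$ by the coboundary of $(\beta_{ij})$, the comparison equivalence between the two categories of twisted sheaves twists the gluing data by $f^*\beta_{ij}$, and one should note that this too is strictly $\iota$-invariant for the same reason, so the two copies of $\iota^*$ are intertwined. This is a one-line check, not a gap in substance, but it is the part of the proposition that justifies the word ``naturally,'' and the paper makes a point of including it; you should too.
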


\begin{proof}
  Choose an Azumaya algebra $\mathcal A$ representing $\alpha$. Note that we have the
  natural identification $\iota^*
    f^* \mathcal A = f^* \mathcal A$ and thus a well-defined action
  $$\iota^* \colon \Coh_{f^* \mathcal A}(X) \congpf \Coh_{\iota^*f^* \mathcal A}(X) =
    \Coh_{f^* \mathcal A}(X)$$
  and thus also
  $$\iota^* \colon D^b(X, f^*\alpha) = D^b(\Coh_{f^* \mathcal A}(X)) \congpf
    D^b(\Coh_{f^* \mathcal A}(X)) =  D^b(X, f^* \alpha).$$
  If instead of $\mathcal A$ we started with an equivalent Azumaya algebra
  $\mathcal B \cong \mathcal A \otimes \End(F)$, where $F$ is a vector bundle on
  $Y$, representing the Brauer class
  $\alpha$ on $Y$, then $E \mapsto E \otimes f^* F^\vee$ induces an equivalence
  $$\Coh_{f^* \mathcal A}(X) \congpf \Coh_{f^* \mathcal B}(X)$$
  that fits into the commutative diagram,
  $$
    \begin{tikzcd}
      \Coh_{f^* \mathcal A}(X) \arrow[r, "\iota^*"] \arrow[d, "\otimes f^*F^\vee"] &
      \Coh_{f^* \mathcal A}(X) \arrow[d, "\otimes f^*F^\vee"] \\
      \Coh_{f^*\mathcal B}(X) \arrow[r, "\iota^*"]                               &
      \Coh_{f^*\mathcal B}(X) .
    \end{tikzcd}$$
  Thus, the derived equivalence is independent of the choice of Azumaya algebra.
\end{proof}

\begin{remark}
  Let $X$ be an algebraic variety equipped with an automorphism $f \in
    \Aut(X)$ and a Brauer class $\alpha \in \Br(X)$. In order to get a
  natural action of $\langle f \rangle \subset \Aut(X)$ on $D^b(X, \alpha)$ via pullback, it is not enough to have $f^*
    \alpha = \alpha$ on the level of Brauer classes as there is no preferred
  choice of an equivalence
  $$D^b(X, f^* \alpha) \cong D^b(X, \alpha)$$
  in general.\footnote{Thanks to Ziqi Liu for discussions regarding this remark.}
\end{remark}

Denoting by $j' \colon Z \hookrightarrow Y$ and $j \colon Z \hookrightarrow X$
the embeddings of $Z$ as the branch and ramification divisor, we have a
commutative diagram:
$$
  \begin{tikzcd}
    & X \arrow[d, "f"] \\
    Z \arrow[r, "j'", hook] \arrow[ru, "j", hook] & Y
  \end{tikzcd}$$
As in  \cite[Sec.\ 4.1]{kuzperry}, there are functors
\begin{align*}
  f^*    & \colon D^b(Y, \alpha) \to D^b(X, f^* \alpha)^{\langle \iota^* \rangle} \\
  j_{ *} & \colon D^b(Z, (f \circ j)^* \alpha) \to D^b(X, f^* \alpha)^{\langle
      \iota^*
      \rangle}.
\end{align*}

\begin{theorem}\label{thm:twistedsod}
  Both $f^*$ and $j_{*}$ are fully faithful.
  Moreover, there is a semiorthogonal decomposition of the form
  $$D^b(X, f^* \alpha)^{\langle \iota^* \rangle} = \langle f^* D^b(Y, \alpha),\, j_*
    D^b(Z, (f \circ j)^* \alpha)\rangle.$$
\end{theorem}
\begin{proof}
  This follows verbatim from the arguments given in \cite[Thm.\ 4.1]{kuzperry}, replacing
  sheaves by twisted sheaves.
\end{proof}

Before applying the above to our situation, let us give a couple of remarks regarding the derived category of an Enriques surface.
\begin{example}
  Let $S$ be an Enriques surface and $f\colon \tilde S \to S$ its K3 cover. There is a unique nontrivial Brauer class $\alpha \in \Br(S)$, see e.g., \cite{beauvillenriques}.
  By Theorem \ref{thm:twistedsod}, we have
  $$D^b(S, \alpha) \cong D^b(\tilde S, f^{*} \alpha)^{\mathbb Z / 2 \mathbb Z}.$$
\end{example}

\begin{remark}
  \label{rem:brauertriv}
  Now suppose we have $f \colon X \to Y$ as above with $f^* \alpha = 1$. Choose an
  Azumaya algebra $\mathcal A$ on $Y$ representing $\alpha \in \Br(Y)$. Triviality of $f^* \alpha$ implies that
  there is a vector bundle $F$ on $X$ such that $f^* \mathcal A \cong \End(F)$ and a
  Morita--equivalence
  \begin{align*}
    \Theta_{F} \colon D^b(X, f^*\mathcal A) & \congpf D^b(X)                         \\
    G                                       & \mapsto F^* \otimes_{f^* \mathcal A} G
  \end{align*}
  By conjugation, we obtain $\Phi_{\iota, F} \coloneqq \Theta_F \circ \iota^* \circ \Theta_F^{-1} \in \Aut(D^b(X))$.
\end{remark}

\begin{example}
  Let $S$ be an Enriques surface and $f \colon \tilde{S} \to S$ its K3 cover.
  Let $\iota \in \Aut(S)$ denote the covering involution and let $\alpha \in \Br(S)$ be the
  unique nontrivial Brauer class on $S$. As the involution acts
  freely, we have
  $$D^b(S) \cong D^b(\tilde{S})^{\langle \iota^* \rangle} \text{ and } D^b(S,
    \alpha) \cong D^b(\tilde{S}, f^* \alpha)^{\langle \iota^* \rangle}$$
  by Theorem \ref{thm:twistedsod}.

  Beauville \cite{beauvillenriques} has shown that there are divisors in the moduli
  space of Enriques surfaces on which one has $f^* \alpha = 1$.
  In this case, there is a line bundle $L \in \Pic(\tilde{S})$ satisfying $\iota^* L = L^{-1}$.
  The autoequivalence described in Remark \ref{rem:brauertriv}
  has already been studied by Reede in \cite{reede2024enriques}.
  One can choose an Azumaya algebra $\mathcal A$ on $S$ such that $f^* \mathcal A = \End(\mathcal
    O_X \oplus L)$, i.e., $F = \mathcal O_X \oplus L$ and check that
  $$\Phi_{\iota, F}(E) = \iota^* E \otimes L,$$
  see \cite{reede2024enriques}.
  Then we have
  $$D^b(S, \alpha) \cong D^b(\tilde{S}, f^* \alpha)^{\langle \iota^* \rangle} \cong
    D^b(\tilde{S})^{\langle\Phi_{\iota, F}\rangle}.$$

\end{example}

Let us now apply the above to the case of cubic fourfolds containing two planes
intersecting along a line.
As before, let $X$ be a cubic fourfold containing two planes $P_1, P_2 \subset X$ intersecting along a line $L = P_1 \cap P_2$, and assume that $X$ is very general with this property.
Let $\iota_i \in \Aut(F_L)$ denote the covering involution of $f_i
  \colon F_L \to S_{P_i}$ for $i = 1, 2$ and let $j \colon \mathbb E \subset F_L$ denote the ramification locus of the double covering
$F_L \to S_{P_i}$. Recall that $\mathbb E$ is the disjoint union of two smooth
elliptic curves.

By \cite[Thm.\ 4.1]{kuzperry}, we have the following semiorthogonal
decomposition:

\begin{lemma}
  There are semiorthogonal decompositions
  $$D^b(F_L)^{\langle \iota_i^* \rangle} = \langle D^b(S_{P_i}), D^b(\mathbb E) \rangle.$$
\end{lemma}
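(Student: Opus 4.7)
The plan is to apply the untwisted version of Theorem \ref{thm:twistedsod} (i.e., the original \cite[Thm.\ 4.1]{kuzperry}) directly to the double cover $f_i\colon F_L \to S_{P_i}$. Concretely, I would take $Y = S_{P_i}$, $X = F_L$, $\alpha = 0 \in \Br(S_{P_i})$, the covering involution $\iota = \iota_i$, and identify the branch and ramification divisors with $\mathbb E \subset F_L$. The key point to check is that we are genuinely in the setup of a cyclic double cover defined by a section of $\mathcal{L}^2$ for some line bundle $\mathcal{L}$ on $S_{P_i}$.

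First, I would recall from Proposition \ref{prop:descfl} that $F_L \to S_{P_i}$ is the finite double cover of $S_{P_i}$ obtained as the base change of $L \to \mathbb{P}^1$ along the elliptic fibration $S_{P_i} \to \mathbb{P}^1$, and that its branch locus is the disjoint union of the two smooth elliptic fibers lying over the branch points of $\gamma_X|_L\colon L \to \mathbb{P}^1$. In particular, the branch divisor $\mathbb E \subset F_L$ is $2$-divisible in $\Pic(S_{P_i})$ (it is the pullback of twice a point on $\mathbb{P}^1$ times the class of a fiber), so $F_L \to S_{P_i}$ is realized as $\Spec_{\mathcal O_{S_{P_i}}}(\mathcal O_{S_{P_i}} \oplus \mathcal L^{-1})$ for an appropriate square root $\mathcal L$ of $\mathcal O_{S_{P_i}}(\mathbb E_{S_{P_i}})$. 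Thus the hypotheses of \cite[Thm.\ 4.1]{kuzperry} are satisfied.

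Second, the cited theorem then immediately produces the semiorthogonal decomposition
$$D^b(F_L)^{\langle \iota_i^* \rangle} = \bigl\langle f_i^* D^b(S_{P_i}),\; j_* D^b(\mathbb E) \bigr\rangle,$$
where $j\colon \mathbb E \hookrightarrow F_L$ denotes the inclusion of the ramification divisor, and both $f_i^*$ and $j_*$ are fully faithful on the specified subcategories. Identifying $D^b(S_{P_i})$ and $D^b(\mathbb E)$ with their essential images under $f_i^*$ and $j_*$ gives the stated decomposition.

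Since everything we need is essentially a direct citation of \cite[Thm.\ 4.1]{kuzperry} once the geometric setup has been matched, there is no serious obstacle: the only thing to verify carefully is the cyclic double cover structure, which is an immediate consequence of Proposition \ref{prop:descfl} and the fact that $\mathbb E$ is the preimage of a divisor of the form $2D$ on the base $\mathbb P^1$ of the elliptic fibration.
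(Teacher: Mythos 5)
Your proposal is correct and is essentially the paper's argument: the lemma is obtained by directly citing \cite[Thm.\ 4.1]{kuzperry} for the double cover $f_i\colon F_L\to S_{P_i}$ branched along the union of two elliptic fibers. Your extra verification that this union is linearly equivalent to $2f$ (twice the fiber class), so that the cyclic-cover hypothesis holds, is a reasonable detail the paper leaves implicit.
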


As $f_i^* \alpha_{P_i}$ is trivial, we can find vector bundles $F_i$ on
$F_L$ and Azumaya algebras $\mathcal A_i$ on $S_{P_i}$ such that
$$f_i^* \mathcal A_i \cong \End(F_i).$$
By the considerations in Remark \ref{rem:brauertriv}, we obtain thus autoequivalences
$$\Phi_i \coloneqq \Theta_{F_i} \circ \iota_i^* \circ \Theta_{F_i}^{-1}
  \in \Aut(D^b(F_L)).$$
\begin{lemma}
  There are semiorthogonal decompositions
  $$D^b(F_L)^{\langle \Phi_i \rangle} = \langle f_i^* D^b(S_{P_i}, \alpha_{P_i}),
    D^b(\mathbb E) \rangle.$$
\end{lemma}
\begin{proof}
  Note that we have $(f \circ j)^*\alpha = 1$ as $\Br(\mathbb E)$ is trivial since $\mathbb E$ is a curve. The statement then follows from Theorem
  \ref{thm:twistedsod} and conjugation by $\Theta_{F_i}$.
\end{proof}

We are now ready to prove the main result of this section.

\begin{theorem}
  \label{thm:derivedmain}
  There is a linear exact equivalence
  $$D^b(F_L)^{\langle \Phi_1 \rangle} \cong D^b(F_L)^{\langle \Phi_2 \rangle},$$
  which respects the semiorthogonal decompositions
  $$D^b(F_L)^{\langle \Phi_i \rangle} = \langle f_i^* D^b(S_{P_i}, \alpha_{P_i}),
    D^b(\mathbb E) \rangle.$$
\end{theorem}
\begin{proof}
  By combining the results of \cite{donagipantev} and \cite{stellariFM}, there is an $(\alpha^{-1}_{P_1} \boxtimes
    \alpha_{P_2})$-twisted Fourier--Mukai kernel $$\mathcal F \in D^b(S_{P_1}
    \times S_{P_2}, \alpha^{-1}_{P_1} \boxtimes \alpha_{P_2})$$ inducing the equivalence
  $$D^b(S_{P_1}, \alpha_{P_1}) \congpf D^b(S_{P_2}, \alpha_{P_2}),$$
  which is linear with respect to the elliptic fibrations $S_{P_i} \to \mathbb
    P^1$. The pullback of this
  kernel under the cover $F_L \times F_L \to S_{P_1} \times S_{P_2}$
  yields an $(\Phi_1 \times \Phi_2)$-equivariant Fourier--Mukai kernel, inducing a functor
  $$\langle f_1^* D^b(S_{P_1}, \alpha_{P_1}),
    D^b(\mathbb E) \rangle = D^b(F_L)^{\langle \Phi_1 \rangle} \to D^b(F_L)^{\langle \Phi_2 \rangle} = \langle
    f_2^* D^b(S_{P_2}, \alpha_{P_2}),
    D^b(\mathbb E) \rangle.$$ By construction, it induces an
  equivalence between $f_1^* D^b(S_{P_1}, \alpha_{P_1})$ and $f_2^*
    D^b(S_{P_2}, \alpha_{P_2})$. As the Donagi--Pantev
  equivalence \'etale locally on the base is given by the Poincare sheaf, we see
  that it induces the classical Fourier--Mukai equivalence on $D^b(\mathbb E)$.
\end{proof}

\section{Disjoint and pointwise intersection}

\label{sec:others}

In this section, we briefly discuss the other two cases: Two planes which are
disjoint or intersect in a point. As mentioned in the introduction, the three cases each yield $18$-dimensional families of smooth cubic fourfolds and, therefore, do not specialize to each other.

\subsection{Disjoint planes}

Let $X$ be a cubic fourfold containing two disjoint planes $P_1, P_2 \subset X$.
As noted before, in this case, the associated K3 surfaces are isomorphic.
\begin{theorem}[{\cite[§3, App.]{voisintorelli}}]
  There is an isomorphism
  $$S_{P_1} \cong S_{P_2}$$
  \label{lem:isodisjoint}and the Brauer classes $\alpha_{P_i} \in \Br(S_{P_i})$ are trivial.
\end{theorem}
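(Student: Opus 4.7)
The plan is to exhibit a common correspondence surface of lines that realises both associated K3 surfaces as isomorphic quotients and simultaneously provides sections of both Brauer--Severi varieties. Let $F_{12} \subset F(X)$ denote the closure of the locus of lines $L \subset X$ meeting both $P_1$ and $P_2$. Since the two disjoint planes span all of $\mathbb P^5$, no such line lies entirely in either plane, and for each $i$ the three-space $\Pi \coloneqq \overline{L + P_i}$ meets $X$ in $P_i \cup Q_L$ with $L \subset Q_L$. This realises $L$ as a line on a fibre of the quadric surface fibration $f_{P_i} \colon \Bl_{P_i} X \to \mathbb P^2$, giving closed embeddings $F_{12} \hookrightarrow F_{P_i}'$ and composite morphisms $\varphi_i \colon F_{12} \to S_{P_i}'$.

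The main step would be to show that each $\varphi_i$ is an isomorphism. Under the very general hypothesis, Lemma \ref{lem:singularitiesofcurve} forces $C_{P_i}'$ to be smooth, as $X$ contains no further planes; consequently $S_{P_i}' = S_{P_i}$. The critical geometric observation, relying on $P_1 \cap P_2 = \emptyset$, is that $\Pi \cap P_j$ (for $j \neq i$) is a single point $q$, and any ruling on the smooth residual quadric $Q_L$ contains exactly one line through $q$. This yields bijectivity on the open locus where $Q_L$ is smooth and, generically, extends across the branch curve $C_{P_i}'$, since at smooth points of $C_{P_i}'$ the point $q$ is not the vertex of the quadric cone $Q_L$. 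Composing the two resulting isomorphisms would then yield $S_{P_1} \cong F_{12} \cong S_{P_2}$.

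For the Brauer classes, I would observe that the same embedding $F_{12} \hookrightarrow F_{P_i}$ is a section of the Brauer--Severi variety $F_{P_i} \to S_{P_i}$: over a ruling $s \in S_{P_i}$ of a residual quadric $Q$, the fibre of $F_{12}$ picks out the unique line of that ruling meeting $P_j$. As a Brauer--Severi variety admitting a section has trivial Brauer class, this would force $\alpha_{P_i} = 0 \in \Br(S_{P_i})$. The main obstacle is the careful verification over the smooth branch curve $C_{P_i}'$, where the residual quadric degenerates to a cone; this reduces to a routine transversality check that $\Pi \cap P_j$ avoids the vertex of $Q_L$ for the very general choice.
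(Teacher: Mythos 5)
Your proposal is correct and follows essentially the same route as the paper: the paper also takes $S = F'_{P_1}\cap F'_{P_2}$ (the lines meeting both planes), shows the two projections to $S_{P_1}$ and $S_{P_2}$ are isomorphisms (deferring the geometric details to Voisin's appendix, which you instead spell out via the unique line of each ruling through the point $\Pi\cap P_j$), and deduces triviality of the Brauer classes from the resulting section of the Brauer--Severi variety $F_{P_i}\to S_{P_i}$.
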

\begin{proof}
  Let $F_{P_1, P_2} = F'_{P_1} \cap F'_{P_2} \subset F(X)$ denote the locus of lines intersecting both $P_1$ and
  $P_2$. In particular, we get morphisms $F_{P_1, P_2} \to S_{P_1}$ and $F_{P_1, P_2} \to S_{P_2}$
  which one shows to be isomorphisms by geometric considerations similar to
  the ones in Section \ref{sec:geometry}, see \cite[§3, App.]{voisintorelli}.

  Moreover, the inclusion $S_{P_1} \cong F_{P_1, P_2} \subset F'_{P_1}$ yields a
  section of the Brauer--Severi variety $F_{P_1}  / S_{P_1}$, hence the Brauer class
  $\alpha_{P_1}$ is trivial in $\Br(S_{P_1})$.
\end{proof}

\begin{remark}
  The isomorphism $S_{P_1} \cong S_{P_2}$ does not respect the double
  covers. In fact, the automorphism group
  $$\Aut(S_{P_1}) = \mathbb Z / 2 \mathbb Z * \mathbb Z
    / 2\mathbb Z$$
  is freely generated by the two covering involutions, see \cite[Cor.\ 3.4]{galluzilombardopeters}.
\end{remark}

\subsection{Pointwise intersection}

Let $X$ be a cubic fourfold containing two planes $P_1$ and $P_2$ intersecting
in a point, and assume that $X$ is very general with this property. In particular, we assume that there are no planes on $X$ other than $P_1$ and $P_2$. In this case, the ramification curves $C_{P_i} \subset \mathbb P^2$
are smooth sextics admitting a tritangent by the characterization in Lemma \ref{lem:singularitiesofcurve}.
The preimage of the tritangent under the double cover splits into two smooth rational curves. Using the same strategy as in Lemma \ref{lem:descpic}, one can describe the Picard lattice of the K3 surfaces as follows:
\begin{lemma}\label{lem:descpic2}
  The Picard groups of the associated K3 surfaces $S_{P_i}$ are generated by the classes $s_1, s_2 \in \Pic(S_{P_i})$ of the smooth rational curves covering the tritangent to the sextic in $\mathbb P^2$. With respect to this basis, the intersection form is given by
  $$s_1^2 = s_2^2 = -2 \text{ and } s_1 \cdot s_2 = 3.$$
\end{lemma}
As in the case of planes intersecting along a line, there is a unique involution on the associated K3 surfaces:
\begin{lemma}
  \label{lem:isoK3p}
  The automorphism group $\Aut(S_{P_1}) \cong \mathbb Z / 2 \mathbb Z$ is
  generated by the covering involution.
\end{lemma}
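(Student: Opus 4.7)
The plan is to mirror the proof of Lemma \ref{lem:isok3}: identify the Picard lattice of $S_{P_1}$ explicitly and then appeal to the classification of automorphism groups of K3 surfaces via Torelli.

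First, I would describe $\Pic(S_{P_1})$ under the very generality hypothesis. Since $C'_{P_1}$ is a smooth plane sextic, the K3 surface $S_{P_1}$ is itself smooth, realized as the double cover $f \colon S_{P_1} \to \mathbb P^2$ branched along $C'_{P_1}$. The tritangent line $\ell \subset \mathbb P^2$ to $C'_{P_1}$ pulls back to a divisor $f^{-1}(\ell) = T + T'$, where $T$ and $T'$ are smooth rational curves meeting transversally in three points (the preimages of the three tangency points). Setting $h = f^{*}\mathcal{O}_{\mathbb P^2}(1)$, one computes $h^2 = 2$, $h \cdot T = h \cdot T' = 1$, $T + T' = h$, and hence $T^2 = T'^2 = -2$. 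By the genericity hypothesis, $\Pic(S_{P_1}) = \langle h, T\rangle$ is a rank-two lattice of discriminant $-5$.

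Next, I would apply the strong Torelli theorem for K3 surfaces (see \cite[Ch.\ 15]{lecK3}). For the very general $S_{P_1}$, any automorphism acts as $\pm 1$ on the transcendental lattice and restricts to a Hodge isometry of $\Pic(S_{P_1})$ preserving the ample cone, and conversely every such compatible isometry lifts. In our rank-two Picard lattice, the ample cone is the Weyl chamber bounded by the two adjacent $(-2)$-walls $T^\perp$ and $T'^\perp$; its stabilizer inside the orthogonal group permutes these walls and is therefore cyclic of order two, generated by the involution $h \mapsto h$, $T \mapsto h - T = T'$. This involution is geometrically realized by the covering involution of the double cover $f$, which swaps the two components of $f^{-1}(\ell)$, whence $\Aut(S_{P_1}) = \mathbb Z/2\mathbb Z$.

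The main technical point is the clean identification of $\Pic(S_{P_1})$: one must verify that the Picard rank is exactly two for the very general member of the $18$-dimensional family, and that no additional $(-2)$-classes contribute unexpected symmetries beyond the reflections in $\pm T$ and $\pm T'$ (together with their Pell-type translates, which are absorbed by the infinite Weyl group). This is the direct analogue of the genericity argument used in Lemma \ref{lem:descpic} (and of Stellari's genericity statement cited at the beginning of Section \ref{sec:geometry}), and follows from a standard deformation argument in the moduli of lattice-polarized K3 surfaces.
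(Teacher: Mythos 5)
Your proposal is correct and follows essentially the same route as the paper, which simply invokes the classification of automorphism groups of K3 surfaces with prescribed Picard lattice (as in Lemma \ref{lem:isok3}); you have just unpacked that citation by computing $\Pic(S_{P_1})=\langle h, T\rangle$ of discriminant $-5$ from the tritangent and running the Torelli/chamber-stabilizer argument by hand. The genericity statement $\rho(S_{P_1})=2$ that you defer to a deformation argument is likewise left implicit in the paper, so there is no gap relative to the original.
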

\begin{proof}
  This follows from the classification of
  automorphisms of K3 surfaces with prescribed Picard lattice, see
  \cite[Cor.\ 3.4]{galluzilombardopeters}.
\end{proof}

Again, this implies that any isomorphism between the K3 surfaces $S_{P_1}$ and $S_{P_2}$ would induce an isomorphism between the ramification curves $C_{P_1}'$ and $C_{P_2}'$.

\begin{theorem}
  \label{thm:notiso2}
  The K3 surfaces $S_{P_1}$ and $S_{P_2}$ are not isomorphic.\footnote{It was communicated to us by Reinder Meinsma that one can use the description of the Picard lattices in Lemma~\ref{lem:descpic2} to deduce (with the help of a computer algebra system) that the K3 surfaces $S_{P_i}$ admit no non-trivial Fourier--Mukai partners. In particular, $S_{P_1}$ and $S_{P_2}$ are not derived equivalent.}
\end{theorem}
\begin{proof}
  This follows from Lemma \ref{lem:notisopoint} and Lemma \ref{lem:isoK3p}, cf.\ the proof of Theorem~\ref{thm:notiso}.
\end{proof}

\subsection{Open questions}

Comparing the three cases, it is natural to raise the following questions:

\begin{question}
  In the case of disjoint or one-dimensional intersection, we found a geometric explanation
  of the twisted equivalence
  $$D^b(S_{P_1}, \alpha_{P_1}) \cong D^b(S_{P_2}, \alpha_{P_2})$$
  without passing via the Kuznetsov component. Is
  there a similar ``geometric'' way of producing such an equivalence in the case of
  two planes intersecting in a point?
\end{question}

\begin{question}
  In the case where the two planes intersect along a line, one can relate the two
  ramification curves $C_{P_i} \subset \mathbb P^2$ via Recillas' construction,
  cf.\ Section  \ref{sec:recillas}. Is there an analogous geometric relation in
  the other two cases?
\end{question}

\appendix

\section{Induced maps between twisted Mukai lattices}
\label{sec:latticecomp}
In this section, we study the action of the twisted derived equivalences on the
algebraic parts of the twisted Mukai lattices of the K3 surfaces associated to
cubic fourfolds containing two planes intersecting along a line. This is motivated by the question of figuring out the precise relation between the equivalence via the Kuznetsov component and the Donagi--Pantev equivalence, cf.\ Section \ref{sec:derived2}. At this point however, we have been unable to compute the action of the Donagi--Pantev equivalence on the twisted Mukai lattice in a way that would allow us to compare it to the equivalence via the Kuznetsov component.

We will use the notation introduced in Section \ref{sec:transcendental}. Let $X$ be a cubic fourfold containing a plane $P \subset X$, and assume that $X$ is very general with this property. Then $\Pic(S_P) = \langle h \rangle$, where $h$ is the pullback of the ample
generator under $S_P \to \mathbb P^2$. We
can pick a $B$-field
$B \in H^2(S_P, \mathbb Q)$ lifting the Brauer class $\alpha_P \in \Br(S_P)$.
As in Lemma \ref{lem:picdesc}, one easily checks that
$$\tilde{H}^{1, 1}(S_P, B, \mathbb Z) = \langle 2e_0 + 2B, h, e_4\rangle.$$
By \cite[Cor.\ 8]{addington}, cf.\ \cite[Cor.\ 6.5.1]{huybrechtsK3cat}, there is $\lambda_1
  \in \tilde{H}(\mathcal A_X, \mathbb Z)$ such
that
$$H^2(F(X), \mathbb Z) \cong \lambda_1^{\perp} \subset \tilde{H}(\mathcal A_X,
  \mathbb Z).$$
By the arguments in \cite[Sec.\ 3]{macristellari}, one can compose the Kuznetsov
equivalence
$$\mathcal A_X \cong D^b(S_P, \alpha_P)$$
with twists by line bundles such that it induces a Hodge isometry
\begin{align*}
  \tilde{H}(\mathcal A_X, \mathbb Z) & \congpf \tilde{H}(S_P, B, \mathbb Z) \\
  \lambda_1                          & \mapsto (0, h, 0).
\end{align*}
Note that by \cite[Thm.\ 3.19]{yoshioka}, there is a Hodge isometry
$$\tilde{H}(S_P, B, \mathbb Z) \supset (0, h, 0)^\perp \cong H^2(M_{S_{P}, \alpha_P}(0, h, 0), \mathbb Z),$$
where $M_{S_{P}, \alpha_P}(0, h, 0)$ is the moduli space of $\alpha_P$-twisted sheaves
on $S_P$ with Mukai vector $(0, h, 0)$.
In particular, we get a commutative square
$$
  \begin{tikzcd}
    \tilde{H}(\mathcal A_X, \mathbb Z) \arrow[r, "\sim"] & \tilde{H}(S_P, B, \mathbb Z) \\
    H^2(F(X), \mathbb Z) \cong \lambda_1^{\perp}\arrow[r, "\sim"] \arrow[u, hook] &
    (0, h, 0)^{\perp} \cong H^2(M_{S_{P}, \alpha_{P}}(0, h, 0),
    \mathbb Z) \arrow[u, hook].
  \end{tikzcd}.$$
In fact, the lower horizontal map is induced by a birational map
$$F(X) \xdashrightarrow{\,\sim\,} M_{S_{P}, \alpha_{P}}(0, h, 0),$$
cf.\ \cite[Sec.\ 3]{macristellari}.
The Picard group of the Fano variety $F(X)$ is generated by the classes
$$\Pic(F(X)) = \langle g, [F_P]\rangle,$$
where $g$ is the ample class inducing the Plücker embedding. There is a rational
Lagrangian fibration on $F(X)$ corresponding to the isotropic class $g - [F_P]$.

\begin{lemma}\label{lem:deschodge1}
  On the algebraic part, the Hodge isometry above is given by
  \begin{align*} \langle g, [F_P] \rangle = \Pic(F(X)) & \congpf h^{\perp} \cap \tilde{H}^{1,
               1}(S_P, B, \mathbb Z) = \langle h' , e_4 \rangle                             \\
               g                                     & \mapsto  h' + k_4 e_4,               \\
               [F_P]                                 & \mapsto h'+(k_4-1)e_4
  \end{align*}
  where $h' \coloneqq (B \cdot h) h - 4e_0 - 4B$ and $k_4 \coloneqq \frac{6-(h')^2}{8} \in
    \mathbb Z$.
\end{lemma}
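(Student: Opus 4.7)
The plan is to combine a direct Mukai-lattice computation with the geometric interpretation of the Hodge isometry as a birational identification with a Mukai moduli space.

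First, I would identify the rank-two sublattice $h^{\perp} \cap \tilde H^{1,1}(S_P, B, \mathbb Z)$ explicitly. Using the description $\tilde H^{1,1}(S_P, B, \mathbb Z) = \langle 2 e_0 + 2 B,\, h,\, e_4 \rangle$ from Lemma \ref{lem:picdesc}, writing a general integer class as $a(2 e_0 + 2 B) + b h + c e_4$ and imposing Mukai-orthogonality with $(0, h, 0)$ reduces to a single linear equation in $a$ and $b$; using that $2 B \cdot h$ is odd (the analogue of Lemma \ref{lem:residueclasses} in the generic one-plane setting, extracted from \cite[Lem.\ 6.2]{kuz4fold}) forces $a$ to be even, and one then reads off that $e_4$ together with $h'$ freely generate the orthogonal complement over $\mathbb Z$. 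The auxiliary computations $\langle h', e_4 \rangle = 4$, $(h')^2 = 16 B^2 - 6 (B \cdot h)^2$, and the integrality of $k_4 = (6 - (h')^2)/8$ all follow from the mod-two residues of $B^2$ and $B \cdot h$.

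Next, I would track the image of the isotropic class $g - [F_P]$. By \cite[Sec.\ 3]{macristellari} the Hodge isometry in question is realised geometrically by a birational map $F(X) \dashrightarrow M_{S_P, \alpha_P}(0, h, 0)$ to the Mukai moduli space of $\alpha_P$-twisted sheaves with Mukai vector $(0, h, 0)$. On $F(X)$ the class $g - [F_P]$ is the class of the rational Lagrangian fibration sending a general line $L \subset X$ to the hyperplane $\overline{P \cup Q_L}$ spanned by $P$ and the residual quadric $Q_L$ to $P$ in $\overline{P L} \cap X$; on the moduli-space side, the support map $M_{S_P, \alpha_P}(0, h, 0) \to |h| \cong \mathbb P^2$ is the corresponding Lagrangian fibration, and under Yoshioka's Hodge isometry the pullback of a hyperplane class on $|h|$ agrees with the isotropic class $e_4$. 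Matching these two Lagrangian fibrations forces $g - [F_P] \mapsto e_4$.

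Finally, I would conclude by matching pairings. Since the image of $g$ lies in $h^{\perp} \cap \tilde H^{1,1}$, it has the form $h' + k \cdot e_4$ for some $k \in \mathbb Z$; the Beauville--Bogomolov constraint $q(g) = 6$ translates through $\langle h', e_4 \rangle = 4$ into $(h')^2 + 8 k = 6$, which uniquely yields $k = k_4$. Combining this with the previous step gives $[F_P] = g - (g - [F_P]) \mapsto h' + (k_4 - 1) e_4$ by subtraction, and the consistency checks $q([F_P]) = -2$ and $\langle g, [F_P] \rangle = 2$ fall out automatically.

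The main obstacle is the geometric identification in the second step, i.e., verifying that the residual-quadric Lagrangian fibration on $F(X)$ corresponds, through the identification of \cite[Sec.\ 3]{macristellari} and Yoshioka's Hodge isometry, to the support fibration on $M_{S_P, \alpha_P}(0, h, 0)$; once this correspondence is in place, the remainder of the argument reduces to the single linear-algebra equation above.
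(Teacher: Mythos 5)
Your proposal is correct and follows essentially the same route as the paper: send the isotropic class $g-[F_P]$ to $e_4$ by uniqueness of the (rational) Lagrangian fibration, then pin down the image of $g$ inside $\langle h', e_4\rangle$ from the pairing constraints. The only point worth making explicit is that the $h'$-coefficient of the image of $g$ equals $1$ because $w\cdot e_4 = g\cdot(g-[F_P]) = 6-2 = 4 = \langle h', e_4\rangle$; with that, your equation $(h')^2+8k=6$ is exactly the paper's computation.
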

\begin{proof}
  By the uniquess of the rational Lagrangian fibration on $F(X)$, cf.\ \cite[Ex.\ 1.4.3, p.\ 24]{brookethesis}, the isotropic class
  $g - [F_P]$
  has to be mapped to the isotropic class $e_4$ which induces the unique Lagrangian
  fibration on $M_{S_P, \alpha_P}(0, h, 0)$. Let $w \in h^\perp \cap \tilde{H}^{1,
      1}(S_P, B, \mathbb Z)$ denote the image of $g$ under the Hodge isometry
  above. Then we have $w^2 = g^2 = 6$
  and $w \cdot e_4 = g \cdot (g-[F_P]) = 4$. Solving the resulting equations,
  we obtain $w = h' + k_4 e_4$ with $k_4 \in \mathbb Z$ as above.
\end{proof}

Now, we specialize to the situation where the cubic fourfold contains two planes
intersecting along a line. When constructing a degeneration of the associated K3
surfaces over a small disk such that the cohomology groups assemble into a trivial
local system, one has to make a choice (Note that the moduli space of marked K3
surfaces is not separated).

In our situation, we can construct such a family as
follows, using a trick that was already applied by Moschetti \cite{moschetti}:
Let $X$ be a very general cubic fourfold containing two planes $P_1, P_2$
meeting along a
line $L$. Let $P_3$ denote the residual plane. Embed $X$ as a hyperplane
section in a general cubic fivefold $Y \subset \mathbb
  P^6$. Let $F(\Bl_{P_1} Y / \mathbb P^3) \to \mathcal S' \to \mathbb P^3$
denote the Stein factorization of the relative Fano variety of the quadric
surface fibration $\Bl_{P_1} Y \to \mathbb P^3$, which is ramified along a
(singular) sextic surface. Note that $S_{P_1}' \subset \mathcal S'$ and by
varying hyperplane sections of $Y$ containing $P_1$, we obtain a family of cubic
fourfolds containing a plane, a special member being $X$, while the general member
contains precisely one plane.

In order to resolve the threefold singularity of
$\mathcal S'$ corresponding to the singular point of $S'_{P_1}$, which turns out
the be an ordinary double point, one has to make a choice. As explained in
\cite{kuzsing}, this geometrically amounts to choosing one of the planes $P_2$
and $P_3$ and then performing a flip in its dual plane on the relative Fano variety. For details,
we refer to \cite{kuzsing}.

For the purpose of the following computation, let us
choose $P_3$ and then carry out the construction explained in \cite{kuzsing}. We
obtain a family of cubic fourfolds $P_1 \times \Delta \subset \mathcal X \to \Delta$ over the disk, the general member of which contains precisely one plane, while the special
fiber  $X = \mathcal X_0$ contains three planes $P_1, P_2, P_3$ spanning a
$\mathbb P^3$. Additionally, the construction yields a family $\mathcal S \to
  \Delta$ of smooth K3 surfaces and a family of Brauer--Severi varieties $\mathcal
  F \to \mathcal S \to \Delta$, such that $(\mathcal S_t, [\mathcal F_t] \in \Br(S_t))$ is the
twisted K3 surface associated to $P_1 \subset X_t$. Here, the Brauer--Severi
variety $\mathcal F_{0} \to S_0 = S_{P_1}$ is constructed by performing a flip
of $F'_{P_1}$ in $P_{3}^\vee \subset F'_{P_1}$. In particular, over the
singular point of $S'_{P_1}$, the Brauer--Severi variety restricts to the
projection
$$\Bl_{P_2^\vee \cap P_3^\vee (= \mathrm{pt})} P_2^\vee \to \mathbb P^1,$$
cf. Lemma \ref{lem:desc_resolution}.
\begin{lemma}
  On the algebraic part, the Hodge isometry above is given by
  \begin{align*} \langle g, [F_{P_1}], [F_{P_2}] \rangle = \Pic(F(X)) & \congpf h^{\perp} \cap \tilde{H}^{1,
               1}(S_{P_1}, B_1, \mathbb Z) = \langle h' ,s,  e_4 \rangle                                      \\
               g                                                    & \mapsto  h' + k_4 e_4,                  \\
               [F_{P_1}]                                            & \mapsto h'+(k_4-1)e_4                   \\
               [F_{P_2}]                                            & \mapsto -s + \frac{1-k_s}{2} e_4        \\
               \Bigl( [F_{P_3}]                                     & \mapsto s + \frac{1+k_s}{2} e_4 \Bigr),
  \end{align*}
  where $k_s \coloneqq B \cdot s \equiv 1 \mod 2$.
\end{lemma}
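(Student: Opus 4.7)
The plan is to identify the target lattice $h^\perp \cap \tilde H^{1,1}(S_{P_1}, B, \mathbb Z)$ and then determine the images of the generators $g, [F_{P_1}], [F_{P_2}] \in \Pic(F(X))$ under the Hodge isometry. Extending Lemma~\ref{lem:picdesc} to the case $\Pic(S_{P_1}) = \langle h, s \rangle$, the algebraic part of the twisted Mukai lattice is $\langle 2e_0 + 2B, h, s, e_4 \rangle$. Since $s$ is orthogonal to $h$, $e_0$ and $e_4$, orthogonalising in $h^\perp$ only requires replacing $2e_0 + 2B$ by a suitable integral combination $h'$ with $h' \cdot h = 0$; the residue class computations of Lemma~\ref{lem:residueclasses} guarantee that such an $h'$ exists inside the integral lattice, yielding $h^\perp \cap \tilde H^{1,1}(S_{P_1}, B, \mathbb Z) = \langle h', s, e_4 \rangle$.

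For the images of $g$ and $[F_{P_1}]$, I invoke the degeneration $\mathcal X \to \Delta$ constructed just before the lemma: for $t \in \Delta \setminus \{0\}$, the cubic fourfold $\mathcal X_t$ contains only the plane $P_1$, so $\Pic(\mathcal S_t) = \langle h \rangle$ and the formulas of Lemma~\ref{lem:deschodge1} apply on the general fiber. By continuity of the Hodge isometry across the family $\mathcal S \to \Delta$, the images of $g$ and $[F_{P_1}]$ in the special fiber can acquire no $s$-component and are therefore given by the same expressions as in the rank-one case.

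The central step is the computation of the image of $[F_{P_2}]$. Writing $[F_{P_2}] \mapsto \alpha h' + \beta s + \gamma e_4$ with integer unknowns, I will pin down the coefficients by combining three inputs. First, the Beauville--Bogomolov--Fujiki pairings on $F(X)$ yield $g \cdot [F_{P_2}] = 2$, $[F_{P_1}] \cdot [F_{P_2}] = 2$ and $[F_{P_2}]^2 = -2$, which can be verified by direct geometric computation or by deforming and using the known rank-two case. Second, integrality combined with Lemma~\ref{lem:residueclasses} constrains $\gamma$ to the half-integer adjustment encoded by $k_s$. Third, the sign of $\beta$ is fixed by the explicit flip construction: the Brauer--Severi variety $\mathcal F_0 \to S_{P_1}$ is obtained by flipping $F'_{P_1}$ in $P_3^\vee$ (and not in $P_2^\vee$), and this asymmetric choice geometrically identifies $P_3^\vee$ with (the negative of) the exceptional curve $s$, cf.\ Lemma~\ref{lem:desc_resolution}, forcing $\beta = -1$. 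Solving the resulting linear system then yields $\alpha = 0$ and $\gamma = (1-k_s)/2$.

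The parenthetical formula for $[F_{P_3}]$ follows formally from the linear equivalence $[F_{P_1}] + [F_{P_2}] + [F_{P_3}] = g$ in $\Pic(F(X))$, which reflects the geometric decomposition $X \cap \Pi = P_1 \cup P_2 \cup P_3$: the Plücker class $g$ is realised by lines on $X$ meeting a chosen three-plane, and, taking this three-plane to be $\Pi$, such lines decompose into the divisors of lines meeting each $P_i$. The main obstacle is the determination of the sign of $\beta$: the intersection-theoretic constraints alone permit the involution swapping $[F_{P_2}] \leftrightarrow [F_{P_3}]$, and it is precisely the asymmetric choice of flipping at $P_3^\vee$ versus $P_2^\vee$ that breaks this symmetry. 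A rigorous argument requires tracing through the flip construction of \cite{kuzsing} to pinpoint how the dual plane $P_3^\vee$ is replaced by the exceptional $\mathbb P^1 \cong s$ after resolution, and how the resulting birational transformation translates into the Mukai vector calculation.
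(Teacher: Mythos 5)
Your overall strategy matches the paper's: the images of $g$ and $[F_{P_1}]$ are obtained by specializing Lemma~\ref{lem:deschodge1} along the family $\mathcal X \to \Delta$, and comparing intersection products pins $[F_{P_2}]$ down to the two candidates $\pm s + \frac{1 \pm k_s}{2}e_4$, with $[F_{P_3}]$ taking the opposite sign. Your parenthetical derivation of $[F_{P_3}]$ from the relation $[F_{P_1}]+[F_{P_2}]+[F_{P_3}]=g$ is a reasonable alternative to the paper's symmetric treatment, though it would still need the justification that the Schubert divisor of lines meeting $\Pi$ decomposes with multiplicity one into the three $F'_{P_i}$.

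The genuine gap is exactly where you flag it: the determination of the sign of the $s$-coefficient, which is the only nontrivial content of the lemma beyond Lemma~\ref{lem:deschodge1}. Your heuristic --- that the flip ``identifies $P_3^\vee$ with (the negative of) the exceptional curve $s$'' --- is not only left unproved but is also miscalibrated: by Lemma~\ref{lem:desc_resolution} and the convention fixed before the lemma (the flip is performed in $P_3^\vee$), the fiber $\Sigma$ of $F_{P_1} \to S_{P_1}$ over the exceptional curve representing $s$ is $\Bl_{\mathrm{pt}} P_2^\vee$, i.e.\ it is $P_2^\vee$ that survives over $s$, not $P_3^\vee$. The paper breaks the $[F_{P_2}] \leftrightarrow [F_{P_3}]$ symmetry by an actual computation on $\Sigma$: writing $\Pic(\Sigma) = \langle H, E\rangle$ with $g|_\Sigma = H+E$, the facts that $F'_{P_j} \cap P_j^\vee$ is a plane cubic and that $[F_{P_j}]$ restricts with fiber degree two over $S_{P_1}$ give $[F_{P_2}]|_\Sigma = 3H-E$ and $[F_{P_3}]|_\Sigma = -H+3E$, and evaluating
$$\bigl\langle [F_{P_2}]-[F_{P_3}],\, s \bigr\rangle = \tfrac{1}{2}\int_\Sigma (4H-4E)(H+E) > 0$$
forces $[F_{P_2}] \mapsto -s + \frac{1-k_s}{2}e_4$ since $s^2 = -2$. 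Without this (or an equivalent) computation, the intersection-theoretic constraints you list cannot distinguish the two candidates, so your argument stops short of the actual claim.
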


\begin{proof}
  The description of the images of $g$ and $[F_{P_1}]$ follows from Lemma
  \ref{lem:deschodge1} by specialization. It remains to compute the images of
  $[F_{P_j}]$ for $j = 2, 3$. Again, comparing the intersection products, one
  easily verifies that
  $$[F_{P_j}] \mapsto \pm s + \frac{1 \pm k_s}{2} e_4.$$
  It remains to check which sign is realized by $[F_{P_2}]$. Let $\Sigma
    \subset F_{P_1} \to S_{P_1} \to S'_{P_1}$ denote the fibre over the singular
  point in $S'_{P_1}$. By our convention, we have
  $$\Sigma = \Bl_{P_2^\vee \cap P_3^\vee = (\mathrm{pt})} P_2^\vee.$$
  In particular, we can write $\Pic(\Sigma) =
    \langle H, E \rangle$, where $H$ is the pullback of the ample generator on
  $P_2^\vee$ and $E$ is the class of the exceptional divisor. Note that we
  have $g|_{\Sigma} = H +E.$ The intersection $F_{P_2}' \cap P_2^\vee$ is a
  cubic curve in $P_2^\vee$, see \cite[Ex.\ 6.1.8]{huybook}. Combined with the
  observation that the restriction of $[F_{P_j}]$ to $F_{P_1} \to S_{P_1}$ has
  fiber degree two, it follows that we have \begin{equation}\label{eq:app1}[F_{P_2}]|_{\Sigma} = 3H-E.\end{equation}
  Since the canonical bundle of (the Mukai flop of) $F(X)$ is trivial, the adjunction formula yields $[F_{P_1}]|_{F_{P_1}} = [\omega_{F_P}]$. As $\Sigma \subset F_{P_1} \to S_{P_1}'$ is the fiber over the singular point, the adjunction formula for $\Sigma \subset F_{P_1}$ then yields
  \begin{equation}\label{eq:app2}[F_{P_1}]|_{\Sigma} = [\omega_{F_{P_1}}]|_{\Sigma} = [\omega_{\Sigma}]|_{\Sigma}-[\mathcal O_{F_{P_1}}(\Sigma)]|_{\Sigma} = (E-3H) - 2(E-H) = -g|_{\Sigma}.\end{equation}
  By combining Equation (\ref{eq:app1}) and Equation (\ref{eq:app2}) with $g = [F_{P_1}] + [F_{P_2}] + [F_{P_2}]$, we obtain
  $$[F_{P_3}]|_\Sigma = 3E-H.$$
  Thus we get
  \begin{align*}\pm 2s^2 = \langle([F_{P_2}]-[F_{P_3}]) ,  s \rangle_{\tilde{H}(S_{P_1},
    B_1, \mathbb Z)} & = \frac{1}{2}\int_{F_{P_1}}( [F_{P_2}] -
    [F_{P_3}])g|_{F_{P_1}} \pi^* s                              \\ &= \frac{1}{2}\int_{\Sigma} (4H-4E)(H+E)
    = 4.
  \end{align*}
  As $s^2 = -2$, we conclude that the Hodge isometry sends
  $[F_{P_2}]$ to $-s + \frac{1-k_s}{2} e_4.$
\end{proof}

\begin{corollary}
  On the algebraic part, the Hodge isometry induced by composing Kuznetsov's
  equivalences is given by
  \begin{align*}
    \langle 2e_0+2B_1, h, s, e_4 \rangle = \tilde{H}^{1, 1}(S_{P_1}, B_1, \mathbb Z) & \congpf \tilde{H}^{1,
    1}(S_{P_2}, B_2, \mathbb Z) = \langle 2e_0+2B_2, h, s, e_4 \rangle                                                                                     \\
    h                                                                                & \mapsto h                                                           \\
    2e_0+2B_1                                                                        & \mapsto 2e_0+2B_2 - \frac{1}{2}k_4(e_4 - ( h' + (k_4-1)e_4))        \\
    s                                                                                & \mapsto \frac{1-k_s}{2}(s+\frac{1+k_s}{2}
    e_4)-\frac{1+k_s}{2}(h'+(k_4-1)e_4)                                                                                                                    \\
    e_4                                                                              & \mapsto -4e_0-4B_2 + (B_2 \cdot h) h + s + \frac{2k_4+k_s-1}{2}e_4.
  \end{align*}
  In particular, one can view $S_{P_1}$ as a moduli space of $\alpha_{P_2}$-twisted
  sheaves of Mukai vector $$4e_0+4B_2 - (B_2 \cdot h) h - s - \frac{2k_4+k_s-1}{2}e_4
    \in \tilde{H}^{1, 1}(S_{P_2}, B_2, \mathbb Z)$$ on $S_{P_2}$.
\end{corollary}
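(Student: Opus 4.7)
The plan is to compute the composition $\varphi_2 \circ \varphi_1^{-1}$, where $\varphi_i \colon \widetilde{H}^{1,1}(\mathcal{A}_X, \mathbb{Z}) \xrightarrow{\sim} \widetilde{H}^{1,1}(S_{P_i}, B_i, \mathbb{Z})$ is the Hodge isometry from Kuznetsov's equivalence described by the preceding lemma. For $\varphi_1$ the formulas are provided directly. For $\varphi_2$, one runs the same degeneration construction with the roles of $P_1$ and $P_2$ swapped (keeping $P_3$ as the plane flipped in the degeneration of the family), which produces entirely analogous formulas on the images of $\lambda_1, g, [F_{P_1}], [F_{P_2}], [F_{P_3}]$. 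A first consistency check, reading off the four images in the preceding lemma, yields the identity $g = [F_{P_1}] + [F_{P_2}] + [F_{P_3}]$ in $\Pic(F(X))$, so the rank-$4$ lattice $\langle \lambda_1, g, [F_{P_1}], [F_{P_2}] \rangle$ carries full information about both $\varphi_1$ and $\varphi_2$.

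The second step is to express each basis vector of $\widetilde{H}^{1,1}(S_{P_1}, B_1, \mathbb{Z})$ as a (possibly rational) $\varphi_1$-image of an element of this rank-$4$ lattice. From the preceding lemma one reads $h = \varphi_1(\lambda_1)$, $e_4 = \varphi_1(g - [F_{P_1}])$, and $s = \varphi_1\bigl(\tfrac{1-k_s}{2}(g - [F_{P_1}]) - [F_{P_2}]\bigr)$. For the remaining vector, solving the Mukai-vector identity $\varphi_1(g) = -4 e_0 - 4 B_1 + (B_1 \cdot h) h + k_4 e_4$ yields $\varphi_1^{-1}(2 e_0 + 2 B_1) = \tfrac{k_4 - 1}{2} g - \tfrac{k_4}{2} [F_{P_1}] + \tfrac{B_1 \cdot h}{2} \lambda_1$. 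Applying $\varphi_2$ term-by-term to these four preimages and collecting the result in the basis $\langle 2 e_0 + 2 B_2, h, s, e_4 \rangle$ of $\widetilde{H}^{1,1}(S_{P_2}, B_2, \mathbb{Z})$ reproduces the four stated formulas. The moduli-theoretic interpretation at the end is then automatic, since $e_4$ is the Mukai vector of a skyscraper sheaf on $S_{P_1}$, and the image of this vector picks out the Mukai vector parametrized by $S_{P_1}$ viewed as a moduli space on $(S_{P_2}, \alpha_{P_2})$, up to the standard sign convention.

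The main difficulty lies in the class $2 e_0 + 2 B_1$, which has no direct geometric interpretation on $F(X)$ and whose preimage is only rational. In order for the image of this preimage under $\varphi_2$ to land integrally in $\widetilde{H}^{1,1}(S_{P_2}, B_2, \mathbb{Z})$ and to match the formula involving $2 e_0 + 2 B_2$ rather than an arbitrary rational shift thereof, the $B$-fields $B_1$ and $B_2$ must be chosen compatibly and the integer constants $k_4, k_s$ and $B_i \cdot h$ must satisfy the right identities. This is ultimately governed by the fixed parities $2 B_i \cdot h \equiv 2 B_i \cdot s \equiv 1 \pmod{2}$ from Lemma \ref{lem:residueclasses} and the abstract isomorphism of $\Pic(S_{P_1})$ with $\Pic(S_{P_2})$ from Lemma \ref{lem:descpic}, so that a suitable choice of lifts makes the algebraic manipulation collapse to the claimed formulas.
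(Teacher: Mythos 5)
Your proposal matches the paper's own proof, which is given in a single line as ``a straightforward combination of the preceding computations'': composing the Hodge isometry of the preceding lemma for $P_1$ with its analogue for $P_2$ (obtained from the symmetric degeneration, still flipping in $P_3^\vee$) and rewriting the basis $\langle 2e_0+2B_1, h, s, e_4\rangle$ in terms of $\lambda_1, g, [F_{P_1}], [F_{P_2}]$ is exactly the intended computation. Your explicit preimage formulas (e.g.\ $e_4 = \varphi_1(g-[F_{P_1}])$, $s = \varphi_1\bigl(\tfrac{1-k_s}{2}(g-[F_{P_1}])-[F_{P_2}]\bigr)$ and the rational preimage of $2e_0+2B_1$) are correct and in fact supply more detail than the paper does, including the one genuinely delicate point --- the compatible normalization of the $B$-fields needed for the image of $2e_0+2B_1$ to land on $2e_0+2B_2$ plus an algebraic correction term.
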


\begin{proof}
  A straightforward combination of the preceding computations.
\end{proof}

\begin{remark}
  One can normalize the $B$-fields such that at least one of $B_2 \cdot h = 1, k_4 =
    0$ and $k_s = 1$ holds, thus simplifying the above computations.
\end{remark}

\printbibliography

\end{document}